\setlist[1]{itemsep=2pt}
\newcommand{\la}{\langle}
\newcommand{\ra}{\rangle}
\newcommand{\norm}[1]{\left\lVert#1\right\rVert}
\newcommand{\R}{\mathbb{R}}
\newcommand{\N}{\mathbb{N}}
\newcommand{\Z}{\mathbb{Z}}
\newcommand{\B}{\mathrm{Bas}}	
\newcommand{\Lim}{\mathscr{L}}				
\newtheorem{theorem}{Theorem}
\newtheorem{proposition}[theorem]{Proposition}
\newtheorem{corollary}[theorem]{Corollary}
\newtheorem{lemma}[theorem]{Lemma}
\newtheorem{theoremintro}{Theorem}						
\crefname{theoremintro}{Theorem}{Theorems}				
\theoremstyle{definition}
\newtheorem{definition}[theorem]{Definition}
\newtheorem{construction}[theorem]{Construction}
\crefname{construction}{Construction}{Constructions}				
\theoremstyle{remark} 
\newtheorem{remark}[theorem]{Remark}
\newtheorem{example}[theorem]{Example}
\title{Quantitative approximate definable choices}
\author{Antonio Lerario}
\email{\href{mailto:alerario@sissa.it}{lerario@sissa.it}}
\author{Luca Rizzi}
\email{\href{mailto:lrizzi@sissa.it}{lrizzi@sissa.it}}
\author{Daniele Tiberio}
\email{\href{dtiberio:lrizzi@sissa.it}{dtiberio@sissa.it}}
\address{SISSA, via Bonomea 265, 34136 Trieste, Italy}
\date{\today}
\begin{document}
%
\begin{abstract}
In semialgebraic geometry, projections play a prominent role. A \emph{definable choice} is a semialgebraic selection of one point in every fiber of a projection. Definable choices exist by semialgebraic triviality, but their complexity depends exponentially on the number of variables. By allowing the selection to be approximate (in the Hausdorff sense),
 we improve on this result. In particular, we construct an approximate selection whose degree is linear in the complexity of the projection and does not depend on the number of variables. This work is motivated by infinite--dimensional applications, in particular to the Sard conjecture in sub-Riemannian geometry. To prove these results, we develop a general quantitative theory for Hausdorff approximations in semialgebraic geometry, which has independent interest.
\end{abstract}

\maketitle


\section{Introduction}\label{sec:introsemialg}	
	
The goal of this paper is to prove a quantitative approximate version of the \emph{definable choice} theorem, \cite[Thm.\ 4.10]{ComteYomdin}. Before stating our main results, let us explain the name and the context of results of this type.
	
Let $S\subset \R^{n}$ be a compact semialgebraic set, presented as
	\[\label{eq:presS}
	S=\bigcup_{i=1}^a\bigcap_{j=1}^{b_i}\left\{x\in \R^{n}\mid \mathrm{sign}(p_{ij}(x))=\sigma_{ij}\right\},
	\]
	where $\sigma_{ij}\in \{0, +1, -1\}$, the $p_{ij}$ are polynomials of degree at most $d$. We condense this information into the \emph{diagram}  $D(S):=(n, c, d)$ of the representation  \eqref{eq:presS}, where $c=a\max b_i$.
	
	We denote by \[\pi:\R^{n}\to \R^\ell\] the linear projection onto the last $\ell$ coordinates. 
	It follows from Tarski--Seidenberg that the set $\pi(S)$ is semialgebraic and a natural problem is to choose, for every element $y\in \pi(S)$, an element $x(y)\in \pi^{-1}(y)\cap S$ so that the resulting set
	$A:=\{x(y)\}_{y\in \pi(S)}$ is semialgebraic and of dimension at most $\ell$. The set $A$ is called a \emph{definable choice} over $\pi(S)$.

	The fact that this can be done  follows from a result in semialgebraic geometry called \emph{semialgebraic triviality} (see \cref{coro:exist}). However, as a result of this process there is no good control on the geometry of $A$ in terms of the data defining $S$ (see \cref{remark:exponential}). 
		
For many geometric applications one does not really need that $A$ is a choice over $\pi(S)$, but it is enough that it is close to it. More precisely, given $\epsilon>0$, denote by $\mathcal{U}_\epsilon(S)$ the Euclidean $\epsilon$--neighbourhood of $S$. Then one can relax the requirements for the definable choice and ask for a set 
	$A_\epsilon\subseteq \mathcal{U}_\epsilon(S)$,  with $\pi(A_\epsilon)$ ``close'' to $\pi(S)$ (in the Hausdorff metric, denoted by $\mathrm{dist}_H$), and possibly with a control on the diagram of $A_\epsilon$. 
	
In this direction we prove two related results. The first one deals specifically with the problem that we have just discussed (see \cref{thm:appsel}).

\begin{theoremintro}[Quantitative approximate definable choice, first version]\label{thm:appsel-intro} 
For every $c \in\mathbb{N}$ there exist $\kappa\in \N$ such that the following holds. Let $n,\ell,d\in \N$, with $1\leq \ell\leq n$. Let $\pi:\R^{n}\to \R^\ell$ be the projection onto the last $\ell$ coordinates and let $S\subset \R^{n}$ be a bounded closed semialgebraic set with 
\[
D(S)=(n, c, d).
\] 
Then, for every $\epsilon>0$ there exists a closed semialgebraic set $	A_\epsilon\subset \R^{n}$ such that:
	\begin{enumerate}[(i)]
		\item \label{item:intermediate1t-intro} $\dim(A_\epsilon)\leq \ell$;
		\item \label{item:intermediate2t-intro}$A_\epsilon\subseteq \mathcal{U}_\epsilon(S)$;
		\item \label{item:intermediate3t-intro}$\mathrm{dist}_{H}(\pi(A_\epsilon), \pi(S))\leq \epsilon$;
		\item \label{item:intermediate4t-intro} $D(A_\epsilon)=(n, \kappa, \kappa d)$.
	\end{enumerate}
\end{theoremintro}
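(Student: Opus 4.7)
The plan is to combine two ingredients: a quantitative Hausdorff smoothing of $S$ (provided by the general framework the paper develops) with a fibrewise Morse--type selection for a generic quadratic function, the key point being that the resulting critical-point conditions can be collapsed into a constant number of sum-of-squares equations, independently of $n$.

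First I would apply the paper's general quantitative Hausdorff approximation theory to produce a closed semialgebraic set $\tilde S\subset \R^{n}$ of diagram $(n,\kappa_{0},\kappa_{0}d)$, with $\kappa_{0}=\kappa_{0}(c)$, and ideally of the simple form $\tilde S=\{F\leq 0\}$ for a single polynomial $F$ built from the $p_{ij}$ via products and sums of squares so as to encode the Boolean structure of \eqref{eq:presS}. I require $\mathrm{dist}_{H}(\tilde S,S)\leq \epsilon/2$; since $\pi$ is $1$-Lipschitz, this propagates to $\mathrm{dist}_{H}(\pi(\tilde S),\pi(S))\leq \epsilon/2$.

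Next, fix a generic $a\in \R^{n}$, set $q(x):=|x-a|^{2}$, and define $A_{\epsilon}$ to be the fibrewise critical locus of $q|_{\tilde S}$ with respect to $\pi$. On the open part $\{F<0\}$, a fibrewise critical point satisfies $\partial_{i}q=0$ for $i=1,\dots,n-\ell$, i.e.\ $x_{i}=a_{i}$ for those indices; bundled as the single equation $\sum_{i=1}^{n-\ell}(x_{i}-a_{i})^{2}=0$, this is one polynomial condition of degree $2$. On the smooth boundary $\{F=0\}$, a fibrewise critical point is characterised by the proportionality of the $(n-\ell)$-vectors $(\partial_{i}q)_{i}$ and $(\partial_{i}F)_{i}$, equivalently by the vanishing of all $2\times 2$ minors of the $2\times(n-\ell)$ matrix formed by these rows, which collapses to a single sum-of-squares equation of degree $O(\kappa_{0}d)$. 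Hence $A_{\epsilon}$ is cut out by a constant number of sign conditions on polynomials of degree $O(\kappa_{0}d)$, giving $D(A_{\epsilon})=(n,\kappa,\kappa d)$ with $\kappa=\kappa(c)$. Genericity of $a$ ensures that the critical locus has dimension $\leq \ell$, yielding (i); compactness of each fibre $\tilde S_{y}$ forces $q$ to attain its minimum at a critical point, so $\pi(A_{\epsilon})=\pi(\tilde S)$, yielding (iii); and the inclusion $A_{\epsilon}\subseteq \tilde S\subseteq \mathcal{U}_{\epsilon/2}(S)$ gives (ii).

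The hard part is Step~1: producing a smoothing whose \emph{number} of defining polynomials depends only on $c$, and not on $n$ or on $a$ and $\max b_{i}$ separately. A naive sign-by-sign smoothing would introduce one polynomial per $p_{ij}$, and passing to disjunctive normal form would blow up the count multiplicatively. The crux is therefore to engineer a constantly many polynomials that encode the full Boolean structure of \eqref{eq:presS} via controlled products and sums of squares, and to establish a quantitative ``polynomial defect'' estimate showing that the corresponding sublevel set is Hausdorff-close to $S$ with controlled degree. This is precisely what the general Hausdorff approximation theory the paper advertises should supply; once it is in place, the fibrewise Morse construction above is essentially routine, modulo the standard genericity arguments on $a$ needed to keep the boundary critical locus of dimension exactly $\ell$.
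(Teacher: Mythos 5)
Your plan — approximate $S$ in the Hausdorff sense, then extract a low--dimensional definable choice by taking the fibrewise critical locus of a generic distance--type function, with all critical conditions collapsed into a constant number of sum--of--squares equations — is in the same spirit as the paper, which runs a critical--point construction borrowed from \cite{BR}. But there are two concrete gaps that your proposal identifies but does not close, and one that it misses entirely.

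First, Step~1 asks for a single sublevel set $\tilde S=\{F\leq 0\}$ Hausdorff--close to $S$ with $\deg F=O(\kappa_0 d)$. The paper's approximation machinery (\cref{propo:approxclosed}) delivers something strictly weaker: a union $S'=\bigcup_{i=1}^a \mathrm{Bas}(\mathcal{P}_i,\mathcal{Q}_i;\R)$ of at most $a$ \emph{closed basic} sets (intersections of equalities and $\leq$--inequalities), with $ab\leq c$ and degrees $\leq d$. Collapsing a union of basic closed sets to a single polynomial inequality of controlled degree is a substantially harder problem (it is related to Pierce--Birkhoff/Br\"ocker--Scheiderer--type bounds, which do \emph{not} give degree control linear in $d$), and the paper does not attempt it. Instead it runs the critical--locus construction separately on each $\mathrm{Bas}(\mathcal{P}_i,\mathcal{Q}_i;\R)$ and takes the union of the resulting $A_{i,\epsilon}$; this only multiplies the combinatorial constant by $a\leq c$, which is harmless.

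Second, and more seriously, genericity of the anchor point $a$ is not enough to keep the boundary critical locus of dimension $\leq\ell$. Your $2\times 2$--minor condition on $\{F=0\}$ is automatically satisfied at every point where $\partial_i F=0$ for all $i\leq n-\ell$ (the second row of the matrix is zero, so every minor vanishes), i.e.\ on the entire fibrewise--singular locus of $\{F=0\}$, independently of $a$. For an $F$ built by sums and products from the $p_{ij}$, that locus can easily have dimension $>\ell$. The paper avoids this by working over $\R\langle\zeta_1,\zeta_2,\zeta_3\rangle$ and replacing $\mathcal{P},\mathcal{Q}$ by infinitesimally perturbed families $\widetilde{\mathcal{P}},\widetilde{\mathcal{Q}}$ (\cref{remark:construction}), so that the relevant varieties become smooth/transversal; the dimension bound and the ``hits every component of every fibre'' property are then inherited from \cite[Prop.\ 5.5, 5.17]{BR}, and transferred back to $\R$ by the Hausdorff--limit machinery of \cref{sec:Hsa}. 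Nothing in your ``generic $a$'' argument plays this role. Relatedly, because the fibres of a basic set $Z(\mathcal{P})\cap\bigcap\{q_j\leq 0\}$ can have corners where several $q_j$ vanish simultaneously, the paper's critical--locus set $\widetilde{S}^\ell$ ranges over \emph{all} subsets $\widetilde{\mathcal{Q}}'\subset\widetilde{\mathcal{Q}}$, not just the full boundary $\{F=0\}$; your proposal considers only interior vs.\ full boundary and would miss minima sitting at corners. Finally, the paper also preconditions by a generic linear change of variables (\cref{propo:mather}) so that $Z(\mathcal{P})$ has finite fibres over $\pi$, a hypothesis required for the \cite{BR} construction to apply; this step is absent from your sketch.
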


	The set $A_\epsilon$ from the statement is therefore an approximate (in the Hausdorff metric) definable choice over $\pi(S)$, with a quantitative control on its diagram.
	
	In view of  applications, it is also useful to have the following alternative version of the previous result (\cref{thm:SemialgebraicSelection}), which essentially follows from \cref{thm:appsel-intro} applied to the case of the graph of a semialgebraic map.
	
\begin{theoremintro}[Quantitative approximate definable choice, second version]\label{thm:SemialgebraicSelection-intro}
	For every $ c, d, \ell \in \N$ there exists  $\beta>1$ satisfying the following statement. Let $n\in \N$ and let $K\subset \R^n$ be a closed semialgebraic set contained in the ball $B_{\R^n}(\rho)$, for some $\rho>0$, and let $F :\R^n\to \R^\ell$ be a locally Lipschitz semialgebraic map such that
\begin{equation}
D(\mathrm{graph}(F|_K)) = (n+\ell,c,d).
\end{equation}

Then for every $\epsilon \in (0 , \rho)$ there exists a closed semialgebraic set $C_\epsilon\subset \R^{n}$ such that:
	\begin{enumerate}[(i)]
		\item \label{item:thm1-intro} $\dim(C_\epsilon)\leq \ell$;
		\item  \label{item:thm-intro2} $C_\epsilon \subseteq \mathcal{U}_\epsilon(K)$;
		\item  \label{item:thm3-intro} $\mathrm{dist}_{H}(F(C_\epsilon), F(K))\leq L(F, \rho) \cdot \epsilon$, where $L(F, \rho):=2+\mathrm{Lip}(F, B_{\R^n}(2 \rho))$;
		\item  \label{item:thm4-intro} for every $e=1, \ldots, n$ and every affine space $\R^e\simeq E\subseteq \R^n$, the number of connected components of $E\cap C_\epsilon$ is bounded by
		\begin{equation}\label{eq:boundintroe}
		b_0(E\cap C_{\epsilon})\leq \beta^e.
		\end{equation}
	\end{enumerate}
\end{theoremintro}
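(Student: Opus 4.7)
The plan is to apply \cref{thm:appsel-intro} to $S:=\mathrm{graph}(F|_K)\subset\R^{n+\ell}$ and then project back onto $\R^n$. Since $K$ is compact and $F$ is continuous on $K$, $S$ is a compact semialgebraic set with $D(S)=(n+\ell,c,d)$. Write $\pi_2:\R^{n+\ell}\to\R^\ell$ for the projection onto the last $\ell$ coordinates (so $\pi_2(S)=F(K)$) and $\pi_1:\R^{n+\ell}\to\R^n$ for the projection onto the first $n$. \cref{thm:appsel-intro} then yields a constant $\kappa\in\N$ depending only on $c$ and, for each $\epsilon\in(0,\rho)$, a closed semialgebraic $A_\epsilon\subset\R^{n+\ell}$ with $\dim A_\epsilon\leq\ell$, $A_\epsilon\subseteq\mathcal{U}_\epsilon(S)$, $\mathrm{dist}_H(\pi_2(A_\epsilon),F(K))\leq\epsilon$, and $D(A_\epsilon)=(n+\ell,\kappa,\kappa d)$. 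I then set $C_\epsilon:=\pi_1(A_\epsilon)$; since $S$ is bounded, so is $A_\epsilon$, and both $A_\epsilon$ and $C_\epsilon$ are compact closed semialgebraic sets.

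Items (i) and (ii) follow at once: $\dim C_\epsilon\leq\dim A_\epsilon\leq\ell$, and if $(x,y)\in A_\epsilon$ there is $(x',F(x'))\in S$ with $\|(x,y)-(x',F(x'))\|<\epsilon$, so $\pi_1(x,y)=x\in\mathcal{U}_\epsilon(K)$. For item (iii), the same pair $(x,x')$ together with $\epsilon<\rho$ force $x\in B_{\R^n}(2\rho)$, and the Lipschitz bound gives
\[
\|F(x)-y\|\leq\|F(x)-F(x')\|+\|F(x')-y\|\leq(\Lip(F,B_{\R^n}(2\rho))+1)\,\epsilon.
\]
Combined with $\mathrm{dist}_H(\pi_2(A_\epsilon),F(K))\leq\epsilon$ and the triangle inequality, applied in both directions of the Hausdorff distance between $F(C_\epsilon)$ and $F(K)$, this yields $\mathrm{dist}_H(F(C_\epsilon),F(K))\leq(2+\Lip(F,B_{\R^n}(2\rho)))\,\epsilon=L(F,\rho)\,\epsilon$.

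The crux is item (iv). Given an affine $e$-subspace $E\subseteq\R^n$, observe that $E\cap C_\epsilon=\pi_1(E'\cap A_\epsilon)$, where $E':=E\times\R^\ell\subset\R^{n+\ell}$ is an affine $(e+\ell)$-subspace. The slice $E'\cap A_\epsilon$ is a semialgebraic subset of $E'\simeq\R^{e+\ell}$ whose defining conditions are those of $A_\epsilon$ restricted to $E'$, so it has diagram $(e+\ell,\kappa,\kappa d)$. By the classical Oleinik--Petrovsky--Thom--Milnor bound on the number of connected components of semialgebraic sets of bounded diagram, $b_0(E'\cap A_\epsilon)\leq\gamma^{e+\ell}$ for some $\gamma$ depending only on $\kappa$ and $d$, hence only on $c$ and $d$. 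Since $\pi_1$ is continuous, each connected component of $E'\cap A_\epsilon$ projects to a connected subset of $E\cap C_\epsilon$, so
\[
b_0(E\cap C_\epsilon)\leq b_0(E'\cap A_\epsilon)\leq\gamma^{e+\ell}\leq\beta^e,
\]
where $\beta:=\gamma^{\ell+1}+1$ depends only on $c,d,\ell$ (the last inequality being valid for $e\geq 1$).

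The principal obstacle I anticipate is precisely the one resolved in item (iv): \cref{thm:appsel-intro} controls the diagram of $A_\epsilon$ and not of its projection $C_\epsilon$, so the connected components of $E\cap C_\epsilon$ cannot be bounded directly by quantitative real algebraic geometry applied to $C_\epsilon$ itself. Lifting each slice to $A_\epsilon$ via $\pi_1^{-1}$ and using that continuous surjections never split connected components circumvents this, at the cost of enlarging the ambient slicing dimension from $e$ to $e+\ell$; this cost is absorbed into $\beta$ exactly because $\beta$ is allowed to depend on $\ell$.
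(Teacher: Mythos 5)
Your proof is correct and follows essentially the same route as the paper's: define $C_\epsilon=\pi_1(A_\epsilon)$ with $A_\epsilon$ given by \cref{thm:appsel-intro} applied to $\mathrm{graph}(F|_K)$, handle (iii) by the triangle inequality combining the Lipschitz bound with the Hausdorff control on $\pi_2(A_\epsilon)$, and handle (iv) by slicing $A_\epsilon$ with $E\times\R^\ell$ and applying the Thom--Milnor bound in ambient dimension $e+\ell$ before projecting. The only differences are cosmetic (a slightly more compressed presentation of (iii) and a harmless variant choice of $\beta$).
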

\begin{remark}[The case of polynomial maps]
\cref{thm:SemialgebraicSelection} can be applied to any polynomial map $F:\R^n \to \R^\ell$ with components of degree bounded by $d$, assuming that the diagram of the original set satisfies $D(K) = (n,c,d)$. In fact, in this case 
\begin{equation}
\mathrm{graph}(F|_K) = \big\{ (x,y)\in \R^n\times \R^\ell \mid x\in K, \, y=F(x)\big\}
\end{equation} 
is a bounded and closed semialgebraic set with $D(\mathrm{graph}(F|_K))=(n+\ell,c+1,d)$.
\end{remark}

\cref{thm:SemialgebraicSelection-intro} corresponds to  \cite[Thm.\ 4.10 and Ex.\ 4.11]{ComteYomdin}, where it is proved the existence of a semialgebraic set $C_\epsilon$ as in \cref{thm:SemialgebraicSelection-intro}, except for \eqref{eq:boundintroe}, which for them has the shape
\[b_0(E\cap C_{\epsilon})\leq f(n,\ell, c, d, e),\]
for some (non--explicit) function $f:\N^5\to \N$. The most notable conclusion from \cref{thm:SemialgebraicSelection-intro}  is therefore  the explicit dependence of the bound on the dimension of the affine space of \cref{item:thm4-intro}, which says that we can take $f(n, \ell, c, d, e)=\beta(c, d, \ell)^e$.

Similarly, the difficult part from \cref{thm:appsel-intro} is proving that the diagram of $A_\epsilon$  has the explicit shape $D(A_\epsilon)=(m, \kappa, \kappa d)$ with $\kappa$ depending only on the combinatorial data $c$ of the diagram of the original set (and not on the number of variables, for instance).

Obtaining this explicit dependence is non--trivial. Compared to \cite[Thm.\ 4.11]{ComteYomdin}, a conceptual novelty is the use of ideas from \cite{BasuLerario}, which in turn involves the study of approximation of semialgebraic sets in the Hausdorff metric.  A large part of the paper is devoted to expanding and developing these ideas, see \cref{sec:Hsa}. 
\subsection{Hausdorff approximations}\cref{sec:Hsa} contains new ideas that have their own interest and that are based on the observation that the Hausdorff distance between semialgebraic sets in $\R^n$ can be studied  within the framework of semialgebraic geometry. In particular, this notion can be defined also on a real closed extension $R$ of the real numbers (see \cref{sec:semialg}), leading to a notion of ``Hausdorff distance'' between semialgebraic sets in $R^n$. The main result from \cref{sec:Hsa} is then a technique to produce Hausdorff approximations of semialgebraic sets using infinitesimals, i.e. working in the real closed field of algebraic Puiseux series (see \cref{def:puiseux}). In the current paper we allow multiple infinitesimals, which makes the technique handful and practical, but which requires nontrivial  extensions of the ideas from \cite{BasuLerario}. What is important for us is that this technique allows to keep control on the combinatorial part and the degrees of the approximating set: for example, in \cref{propo:approxclosed}, we show how to approximate a closed semialgebraic set with a \emph{closed basic} semialgebraic set (see \cref{def:semialgebraic}) whose combinatorial data and degree are controlled in terms of the diagram of the original set (see the beginning of \cref{sec:hacb} to appreciate the subtlety of this statement).

\subsection{Infinite--dimensional applications}\cref{thm:SemialgebraicSelection-intro} was recently used in  \cite{LRT-Sard}, where we study the Sard property for polynomial maps in \emph{infinite} dimension. 
The starting point for \cite{LRT-Sard} is a quantitative proof given by Yomdin of the classical Sard's theorem, in \emph{finite} dimension, that uses ideas from semialgebraic geometry \cite{ComteYomdin}. 

To explain the key point, recall that for a $C^1$ map $F:\R^n\to \R^m$, and $\Lambda=(\Lambda_1, \ldots, \Lambda_m)\in \R^m_+$ the set of almost--critical values of $F$ is defined by
	\begin{equation}
	C^{\Lambda}(F):=
	\bigg\{ x \in \R^n\,\bigg| \,
	\sigma_i(D_x F ) 
	\leq \Lambda_i, 
	\quad
	\forall\, i=1,\dots,m
	\bigg\},
	\end{equation}
	where $\sigma_1(D_x F)\geq \cdots \geq \sigma_m(D_xF)$ are the singular values of $D_xF$ (here we assume $n\geq m$). In the finite--dimensional case, the study of the measure of the critical values of $F$ can be reduced to estimating the ``size'' of the set of its almost critical values, i.e. of the image of its almost critical points. For doing this, one can use the theory of \emph{variations}, introduced by Vitushkin \cite{Vit1}
	 and developed in \cite{ComteYomdin}.  (Recall that the $i$--th variation of a set $S$, denoted by $V_i(S)$ is a sort of $i$--dimensional volume of $S$.)
	
	In \cite[Cor.\ 7.4]{ComteYomdin} a quantitative estimate on the variations of the almost--critical values of polynomial maps has been obtained. In that estimate the dimensional parameter $n$ does not appear explicitly. Using \cref{thm:SemialgebraicSelection-intro}, in \cite{LRT-Sard} we obtain the following result, which makes this dependence explicit.
	
	\begin{theorem}[{\cite[Theorem G]{LRT-Sard}}]\label{thm:estimatevariations-intro}
	Let $n\geq m$, and $p: \R^n \to \R^m$ be a polynomial map with components of degree $\leq d$. For  $ i= 0, \dots, m$, $\Lambda=(\Lambda_1 , \dots , \Lambda_m)\in \R^m_+$ and $\rho >0$, we have  
				\begin{equation}\label{eq:variations}
		V_i( p ( C^{ \Lambda }( p ) \cap B_{\R^n}(\rho) ))
			\leq
			\mathrm{cst}(m,\rho)n^m			\beta_0^n \Lambda_{0} 
			\cdots \Lambda_{i},
		\end{equation}
					where $\beta_0=\beta_0(d,m)$ depends only on $d$ and $m$, $\mathrm{cst}(m,\rho)$ depends only on $m,r$, and  $\Lambda_0:=1$.
	\end{theorem}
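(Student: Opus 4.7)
The strategy is to apply \cref{thm:SemialgebraicSelection-intro} to the pair $(K, p)$ where $K := C^{\Lambda}(p) \cap B_{\R^n}(\rho)$, and then to bound $V_i(p(K))$ using a Cauchy--Crofton argument on the approximating set combined with a coarea-type inequality for variations under a map with controlled singular values.

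First I verify the hypotheses. The conditions $\sigma_j(D_xp) \leq \Lambda_j$ can be rewritten as polynomial inequalities in the coefficients of the characteristic polynomial of the $m\times m$ matrix $D_xp(D_xp)^{\top}$, whose entries are polynomials in $x$ of degree $\leq 2(d-1)$. Consequently $D(\mathrm{graph}(p|_K)) = (n+m, c, d')$ with $c = c(m)$ and $d' = d'(m, d)$, both independent of $n$. Thus the constant $\beta_0 := \beta(c, d', m)$ produced by \cref{thm:SemialgebraicSelection-intro} depends only on $d$ and $m$. For each $\epsilon \in (0, \rho)$ the theorem supplies a closed semialgebraic set $C_\epsilon \subset \R^n$ with $\dim C_\epsilon \leq m$, $C_\epsilon \subseteq \mathcal{U}_\epsilon(K)$, $\mathrm{dist}_H(p(C_\epsilon), p(K)) \leq L(p, \rho)\epsilon$, and $b_0(E \cap C_\epsilon) \leq \beta_0^{e}$ for every affine $e$-subspace $E \subset \R^n$.

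Next I bound $V_i(p(C_\epsilon))$ in two steps. Step~(a): since $\dim C_\epsilon \leq m$ and $C_\epsilon \subset B_{\R^n}(\rho+\epsilon)$, integrating the bound $b_0(C_\epsilon \cap E) \leq \beta_0^{n-i}$ over the affine $(n-i)$-planes meeting $B_{\R^n}(\rho+\epsilon)$ and using that the measure of such planes is (up to an absolute constant) $\binom{n}{i}(\rho+\epsilon)^i$, one obtains the Cauchy--Crofton estimate
\begin{equation*}
V_i(C_\epsilon) \;\leq\; c_1 \binom{n}{i}\rho^i\, \beta_0^{n-i} \;\leq\; c_1(m, \rho)\, n^m\, \beta_0^n .
\end{equation*}
Step~(b): on $K$ the singular values of $D_xp$ satisfy $\sigma_j \leq \Lambda_j$, so $D_xp$ contracts $j$-volumes by at most $\Lambda_1\cdots\Lambda_j$. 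By uniform continuity of the elementary symmetric functions of the singular values of $D_xp$ on the compact $B_{\R^n}(2\rho)$, for each $\delta > 0$ the relaxed bounds $\sigma_j \leq \Lambda_j + \delta$ hold on $\mathcal{U}_\epsilon(K) \supseteq C_\epsilon$ once $\epsilon$ is sufficiently small. The coarea-type inequality for variations under Lipschitz maps with prescribed singular-value control \cite[\S 3]{ComteYomdin} then yields
\begin{equation*}
V_i(p(C_\epsilon)) \;\leq\; c_2(m)\, V_i(C_\epsilon) \prod_{j=1}^i (\Lambda_j + \delta).
\end{equation*}

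Finally, $V_i$ is upper semicontinuous with respect to Hausdorff convergence of uniformly bounded sets, and $p(C_\epsilon) \to p(K)$ in Hausdorff distance. Passing to the $\liminf$ as $\epsilon \to 0$ for fixed $\delta$, then letting $\delta \to 0$, and absorbing $\rho$-dependent constants into $\mathrm{cst}(m,\rho)$, we combine the two inequalities above and use $\Lambda_0 = 1$ to obtain \eqref{eq:variations}. The main obstacle is Step~(b): the approximation $C_\epsilon$ does not lie in $C^{\Lambda}(p)$ but only in its $\epsilon$-neighbourhood, so the singular-value bounds hold only after the $\delta$-relaxation, whose legitimacy rests on the uniform continuity argument above. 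The decisive new input, as compared with the implicit bound of \cite[Cor.\ 7.4]{ComteYomdin}, is the $n$-independent exponential base $\beta_0$ supplied by the combinatorial estimate \eqref{eq:boundintroe} of \cref{thm:SemialgebraicSelection-intro}, which is exactly what renders the dependence on $n$ explicit in \eqref{eq:variations}.
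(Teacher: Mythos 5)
The paper does not prove \cref{thm:estimatevariations-intro}: the statement is quoted from \cite[Theorem~G]{LRT-Sard}, and the present text only records that the polynomial-in-$n$ factor and the base $\beta_0$ in \eqref{eq:variations} come from \eqref{eq:boundintroe}. There is therefore no internal proof here to compare against. That said, your reconstruction is consistent with the paper's indication and with the Comte--Yomdin framework it points to: applying \cref{thm:SemialgebraicSelection-intro} to $K=C^\Lambda(p)\cap B_{\R^n}(\rho)$ and $F=p$, after checking that $D(\mathrm{graph}(p|_K))=(n+m,c,d')$ with $c,d'$ depending only on $m,d$ (the entries of the $m\times m$ matrix $D_xp\,(D_xp)^\top$ have degree $\le 2(d-1)$ in $x$ and the eigenvalue constraints have $m$-bounded combinatorics), is exactly what produces the $n$-independent base $\beta_0$. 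The Cauchy--Crofton bound in Step~(a) and the singular-value $\delta$-relaxation in Step~(b) are also the right tools, and you have correctly identified the crux.

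The final limiting step, however, has a genuine gap. You appeal to ``upper semicontinuity'' of $V_i$ under Hausdorff convergence; what you actually need is $V_i(p(K))\le\liminf_{\epsilon\to 0}V_i(p(C_\epsilon))$, i.e.\ lower semicontinuity, and neither holds for general closed bounded sets: a sequence of finite $\epsilon$-nets of $[0,1]$ converges in Hausdorff distance to $[0,1]$ while their first variations are identically zero, and conversely long thin curves shrinking to a point show $V_1$ is not upper semicontinuous either. The standard way to close the argument, and what the Comte--Yomdin machinery supplies, is to use the inclusion $p(K)\subseteq\mathcal{U}_{L\epsilon}(p(C_\epsilon))$ from \cref{item:thm3-intro}, monotonicity of $V_i$, and the Vitushkin-type neighbourhood estimate
\begin{equation}
V_i\bigl(\mathcal{U}_r(A)\bigr)\;\le\; c(m)\sum_{j=0}^{i}V_j(A)\,r^{\,i-j}.
\end{equation}
This requires running your Steps~(a)--(b) for \emph{every} index $j\le i$, which your estimates already give (with $\Lambda_0=1$, $V_j(p(C_\epsilon))\lesssim\mathrm{cst}(m,\rho)\,n^m\beta_0^n\,\Lambda_0\cdots\Lambda_j$), then applying the neighbourhood bound with $r=L\epsilon$ and letting $\epsilon\to 0$, $\delta\to 0$, so only the $j=i$ term survives. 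One should also check that the Cauchy--Crofton normalization constant contributes at most a factor absorbed by $\mathrm{cst}(m,\rho)\,n^m$; this is routine but should be made explicit. With those two repairs the proposal becomes a correct proof of \eqref{eq:variations}.
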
	
The polynomial dependence on $n$ of \eqref{eq:variations} is a consequence of \cref{item:thm4-intro} from \cref{thm:SemialgebraicSelection-intro}. Letting $n\to \infty$  this explicit estimate is what is used in \cite{LRT-Sard} to prove Sard-type results for suitable maps $f:H\to \R^m$, where $H$ is an infinite--dimensional Hilbert space.

\subsection{Acknowledgements}

This project has received funding from (i) the European Research Council (ERC) under the European Union's Horizon 2020 research and innovation programme (grant agreement GEOSUB, No.\ 945655); (ii) the PRIN project ``Optimal transport: new challenges across analysis and geometry'' funded by the Italian Ministry of University and Research; (iii) the Knut and Alice Wallenberg Foundation. The authors also acknowledge the INdAM support.
The authors wish to thank Saugata Basu for helpful discussions.

\section{Hausdorff approximations in semialgebraic geometry}\label{sec:quantitativesemialgebraic}

\subsection{Semialgebraic sets and maps}\label{sec:semialg} 
In order to study Hausdorff approximations of semialgebraic sets in $\R^n$, it will be convenient to work with an extension $R$ of the field of real numbers, where real ``infinitesimals'' will be themselves elements of the field (below we will take for $R$ the field of 
algebraic Puiseux series with coefficients in $\mathbb{R}$). For this reason, in this section we recall the basic notions from semialgebraic geometry over a general \emph{real closed field} and we refer the reader to the monographs \cite{BasuPoRoyBook, BCR} for more details.
\begin{remark}[On the use of the language of real closed fields]The language of real closed fields might be unfamiliar for some readers, but it is especially useful in this context. We want to stress that using this language is not strictly necessary and it is possible that the proofs below can be formulated only using semialgebraic geometry in $\R^n$. However, it has become quite standard in semialgebraic geometry since, once it is introduced, the proofs and the statements become much shorter. Moreover, we are quoting some technical constructions from \cite{BasuLerario, BasuPoRoyBook, BR} that are stated using this language, and translating them to the classical semialgebraic language would make this part more technical -- the precise point where Puiseux series will be used instead of classical real algebraic geometry is \cref{prop:5properties}. From here, the tools of Hausdorff approximation that we develop in \cref{sec:Hsa} will allow to get back to real sets, keeping control of this process.
\end{remark}

Recall that a real closed field $R$ is an ordered field whose positive cone is the set of squares of elements from $R$, and such that every polynomial in $R[x]$ of odd degree has a root in $R$. The order of $R$ allows to define the sign of an element $r\in R$ as:
\begin{equation}
\mathrm{sign}(r):=\begin{cases}+1&r>0,\\
-1&r<0,\\
\,\,0&r=0.
\end{cases}
\end{equation}
We denote by $R_{+}:=\{r\in R\,|\, \mathrm{sign}(r)\geq 0\}$ the cone of non-negative elements of $R$.

The field $\R$ is real closed. The main example of real closed field we use is the following.

\begin{definition}[Algebraic Puiseux series]\label{def:puiseux}
Let $R$ be a real closed field and $R(\zeta)$ be the field of rational functions in the variable $\zeta$. The real closed field $R\la\zeta\ra$  of \emph{algebraic Puiseux series with coefficients in $R$} is defined by:
\[ 
R\la\zeta\ra:=\left\{f=\sum_{k=k_0}^{\infty} a_k\zeta^{\frac{k}{q}}\,\bigg|\,a_k\in R,\, k_0\in \Z,\,q\in \N,\, \textrm{$f$ is algebraic over $R(\zeta)$}\right\}.
\]
\end{definition}
In order to define a ring structure on the set $R\la \zeta\ra$, we insist that for $r_1, r_2\in \mathbb{Q}$ it holds
\[
\zeta^{r_1}\zeta^{r_2}=\zeta^{r_1+r_2}, \quad (\zeta^{r_1})^{r_2}=\zeta^{r_1 r_2},\quad\zeta^0=1.
\]
Therefore, two Puiseux series $a=\sum_{k\geq k_1}a_k\zeta^{k/q_1}$ and $b=\sum_{k\geq k_2}b_k\zeta^{k/q_2}$ can be written as a formal power series in $\zeta^{1/q}$, where $q$ is the least common multiple between $q_1$ and $q_2$. This allows to add and multiply Puiseux series. If $a=a_1\zeta^{r_1}+a_2\zeta^{r_2}+\cdots\in R\la \zeta\ra,$ with $a_1\neq 0$ and $r_1<r_2<\cdots$, we say that $a>0$ if $a_1>0.$ From this definition it follows that $0<\zeta<r$ for every $r\in R$ with $r>0$, and this is why $\zeta$ is called \emph{infinitesimal}.

\begin{remark}[Algebraic Puiseux series as germs]\label{rem:iso}
The field $R\la \zeta\ra$ is isomorphic, as a real closed field, to the field of continuous semialgebraic functions germs $f:(0, \delta)\to R$, where $(0, \delta)$ is an interval in $R$, \cite[Thm.\ 3.17]{BasuPoRoyBook}. Addition and multiplication translates to the usual ones. The order is defined as follows: the germ of a semialgebraic function $f:(0, \delta)\to R$ is positive if and only if there exists $0<\delta'<\delta$ such that $f(\zeta)>0$ for every $\zeta\in (0, \delta')$. Given a germ of a continuous semialgebraic function $f:(0, \delta)\to R$, in order to get an expression like $f=\sum_{k\geq k_0}a_k\zeta^{k/q}$, i.e. the corresponding element in the field of algebraic Puiseux series, one uses the fact that the graph of $f$ lies on a \emph{branch} of an algebraic curve $P(x, y)=0$ in the plane $R^2$, where $P$ is a polynomial with coefficients in $R$. In particular, this branch admits a parametrization near the origin as 
\[
x(\tau)=\tau^q,\quad y(\tau)=\sum_{k\geq k_0}a_k \tau^{k},
\]
where $q\in \mathbb{N}, k_0\in \Z$ and $a_k\in R$, \cite[Thm.\ 2.2]{Walker}. This means that $P(x(\tau), y(\tau))=0$ as formal power series, and one gets a Puiseux series for $f$ by the substitution $\tau=\zeta^{1/q}$ in $y(\tau)$.
\end{remark}

If $R$ is a real closed field, then so is $R\la \zeta\ra$, see \cite[Cor.\ 2.98]{BasuPoRoyBook}. Using this observation, letting $K=R\la\zeta_1\ra$, then also $R\la \zeta_1, \zeta_2\ra=K\la\zeta_2\ra$ is a real closed field. Repeating this argument inductively, we define the real closed field of \emph{algebraic Puiseux series with multiple infinitesimals and coefficients in $R$} as
\[R\langle \zeta_1, \ldots, \zeta_m\rangle:=R\langle \zeta_1\rangle\cdots \langle \zeta_m\rangle.
\]

Let $R$ be a real closed field.  The \emph{norm} of an element $x=(x_1, \ldots, x_n)\in R^n$ is defined as \[\|x\|:=\sqrt{x_1^2+\cdots +x_n^2}\in R.\] For every $x=(x_1, \ldots, x_n)\in R^n$ and $r\in R_{+}$, we denote by ${B}_{R^n}(x,r)\subset R^n$ the set
\begin{equation}
B_{R^n}(x,r):=\left\{y\in R^n\,\bigg|\, \|x-y\|\leq r\right\},
\end{equation}
and call it the \emph{closed ball around $x$ of radius $r$}. The \emph{Euclidean topology} on $R^n$ is the topology generated by the closed balls. These notions are inspired by their classical counterparts in the case $R=\R$,  with the caveat that in these definitions $r$ is not necessary real. Similarly, the order of a real closed field $R$ can be used to define the notion of semialgebraic sets.

\begin{definition}[Semialgebraic sets and maps]\label{def:semialgebraic}
We say that a set $S\subset R^n$ is \emph{semialgebraic} if
\begin{equation}\label{eq:defA} 
S=\bigcup_{i=1}^a\bigcap_{j=1}^{b_i}\left\{x\in R^n\mid \mathrm{sign}(p_{ij}(x))=\sigma_{ij}\right\},
\end{equation}
for some finite set of polynomials  $p_{ij}\in R[x_1, \ldots, x_n]$ and $\sigma_{ij}\in \{0, +1, -1\}.$
We call the description \eqref{eq:defA} a \emph{representation} of $S$.

Given two sets of polynomials $\mathcal{P}=\{p_1, \ldots, p_{a_1}\},\mathcal{Q}=\{q_1, \ldots, q_{a_2}\}\subset R[x_1, \ldots, x_n]$, we define the  \emph{algebraic} set $Z(\mathcal{P};R)$  and  the \emph{closed basic} semialgebraic set $\B(\mathcal{P}, \mathcal{Q};R)\subset R^n$ by
\begin{equation}
Z(\mathcal{P};R):=\{x\in R^n\mid p_1(x)=\cdots=p_{a_1}(x)=0\},
\end{equation}
\begin{equation}
 \B(\mathcal{P}, \mathcal{Q}; R):=Z(\mathcal{P};R)\cap \left(\bigcap_{j=1}^{a_2}\left\{x\in R^n\mid q_j(x)\leq 0\right\}\right).
 \end{equation}
 
Finally, a map $f:A\to B$ between semialgebraic sets $A\subset R^n, B\subset R^\ell$ is said to be \emph{semialgebraic} if its graph is a semialgebraic set in $R^n\times R^\ell$.
\end{definition}

The representation of a semialgebraic set $S$ as in \eqref{eq:defA}  is not unique, however having such a representation quantifies the complexity of $S$, using the following notion.
\begin{definition}[Diagram of a semialgebraic set]\label{def:diagram}
Let $S\subset R^n$ be a semialgebraic set represented  as in \eqref{eq:defA}. We say that the triple
\begin{equation}\label{eq:diagram}
\left(n, a\cdot\max_i\{{b_i}\}, \max_{i,j}\{\deg(p_{ij})\}\right)\in \N^3
\end{equation}
is a \emph{diagram} for $S$. Below, the equation ``$D(S)= (m, c, d)$'' will mean that there exists a representation of $S$ as in \eqref{eq:defA} with $n\leq m$, $ a\cdot\max_i\{{b_i}\}\leq c$ and $\max_{i,j}\{\deg(p_{ij})\}\leq d$.
\end{definition}

It is often useful to use an alternative (equivalent) description of semialgebraic sets, using the notion of \emph{first--order formulas}.

\begin{definition}[First--order formula]\label{def:firstorder}
Let $R$ be a real closed field. A first--order formula of the language of ordered fields with coefficients in $R$ is a formula written with a finite number of conjunctions, disjunctions, negations, and universal or existential quantifiers on variables, starting from atomic formulas which are formulas of the kind $p(x_1, \ldots, x_n) = 0$ or $q(x_1, \ldots ,x_n) < 0$, where $p$ and $q$ are polynomials with coefficients in $R$. The free variables of a formula are those variables of the polynomials appearing in the formula which are not quantified.
\end{definition}

Semialgebraic sets are precisely those defined by first--order formulas \cite[Prop.\ 2.2.4]{BCR}.
\begin{definition}[Set defined by a formula, see {\cite[Sect.\ 1.1]{BasuPoRoyBook}}]
Let $\psi$ be a first--order formula of the language of ordered fields, with coefficients in $R$, and with $n$ free variables. The \emph{set defined by} $\psi$ in $R^n$ (or the \emph{realization} of $\psi$ in $R^n$) is the semialgebraic set
\[\label{eq:Sformula}
 \mathrm{Reali}(\psi; R)\subseteq R^n
 \]
defined by induction on the construction of the formula, starting from atoms:
\[\mathrm{Reali}(p =0;R):=\{x\in R^n\,|\, p(x)=0\},\quad \mathrm{Reali}(p< 0;R) := \{x \in R^n \,|\, p(x)< 0\},\]
($p$ is a polynomial with coefficients in $R$), 
\begin{gather}
\mathrm{Reali}(\phi_1 \land \phi_2; R) := \mathrm{Reali}(\phi_1; R) \,\cap\, \mathrm{Reali}(\phi_2; R),\\
\mathrm{Reali}(\phi_1 \lor \phi_2; R) := \mathrm{Reali}(\phi_1; R)\, \cup\, \mathrm{Reali}(\phi_2; R),\\
\mathrm{Reali}( \lnot\phi; R ) := R^n \setminus \mathrm{Reali}(\phi; R ),\\
\mathrm{Reali}((\exists y) \,\phi; R) := \{x\in R^n \,|\, \exists y\in R,\, (x, y) \in \mathrm{Reali}(\phi; R)\},\\
\mathrm{Reali}((\forall y)\, \phi; R) := \{x \in R^n\,|\,  \forall y\in R,\, (x, y) \in  \mathrm{Reali}(\phi;R)\},
\end{gather}
where $\phi_1, \phi_2, \phi$ are first--order formulas with an appropriate number of free variables.
\end{definition}

\subsection{Some properties of semialgebraic sets}\label{sec:useful}

We collect here some properties of  semialgebraic sets over a real closed field.

\subsubsection{Semialgebraic triviality}\label{item2useful}
Continuous semialgebraic maps $f:A\to B$ are ``piecewise'' trivial fibrations: there exists a partition of $B$ into finitely many semialgebraic sets
\begin{equation}
B=\bigsqcup_{j=1}^b B_j
\end{equation}
and, for every $j=1, \ldots, b$ there exist fibers $F_j:=f^{-1}(y_j)$, for some $y_j\in B_j,$ and a semialgebraic homemorphism $\varphi_j:B_j\times F_j\to f^{-1}(B_j) $ that makes the following diagram commutative:
\begin{equation}
\begin{tikzcd}
B_j\times F_j \arrow[rr, "\varphi_j"] \arrow[rd, "p_1"'] &     & f^{-1}(B_j) \arrow[ld, "f"] \\
                                                      & B_j &                            
\end{tikzcd}
\end{equation}
This result is called \emph{semialgebraic triviality}, see \cite[Thm.\ 9.3.2]{BCR}. 

\begin{corollary}\label{coro:exist}
Definable choices exist.
\end{corollary}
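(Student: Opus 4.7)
The plan is to derive the corollary as a direct application of the semialgebraic triviality theorem just quoted, applied to the restriction of the projection $\pi|_S : S \to \pi(S)$. Note that $\pi(S)$ is semialgebraic by Tarski--Seidenberg, and $\pi|_S$ is continuous and semialgebraic as a restriction of a linear map, so semialgebraic triviality applies.

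More precisely, I would proceed as follows. Apply \cite[Thm.\ 9.3.2]{BCR} (as recalled above) to $\pi|_S$ to obtain a finite semialgebraic partition
\[
\pi(S) = \bigsqcup_{j=1}^b B_j,
\]
together with, for each $j$, a point $y_j \in B_j$, the fiber $F_j := \pi|_S^{-1}(y_j) \subset S$, and a semialgebraic homeomorphism $\varphi_j : B_j \times F_j \to \pi|_S^{-1}(B_j)$ satisfying $\pi \circ \varphi_j = p_1$, the projection on the first factor. Now, in each piece select a single point $x_j \in F_j$ (which is possible since $F_j$ is a non--empty semialgebraic set, and one may for instance pick the lexicographically smallest point among those lying on a $0$-dimensional stratum of a semialgebraic cell decomposition of $F_j$, so that the choice is definable and canonical). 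Define
\[
A_j := \varphi_j(B_j \times \{x_j\}), \qquad A := \bigsqcup_{j=1}^b A_j.
\]

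Each $A_j$ is semialgebraic, being the image under the semialgebraic homeomorphism $\varphi_j$ of the semialgebraic set $B_j \times \{x_j\}$; hence $A$ is semialgebraic. The commutativity $\pi \circ \varphi_j = p_1$ implies that $\pi|_{A_j} : A_j \to B_j$ is a homeomorphism, so $\pi|_A : A \to \pi(S)$ is a bijection, giving one selected point $x(y) := (\pi|_A)^{-1}(y)$ in every fiber $\pi^{-1}(y) \cap S$. Moreover $\dim(A_j) = \dim(B_j) \leq \ell$ since $B_j \subseteq \pi(S) \subseteq \R^\ell$, and therefore $\dim(A) \leq \ell$, which is exactly the definition of a definable choice over $\pi(S)$.

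The argument is essentially formal once semialgebraic triviality is granted; the only minor point that requires attention is ensuring that the pointwise choice $x_j \in F_j$ can be made in a definable way uniformly in $j$, which is harmless here because we are choosing finitely many points (one per piece of the partition), so any fixed rule, such as the lexicographic minimum, suffices. No sharpness or quantitative control is claimed, consistent with the remark in the introduction that this construction gives no good bound on the diagram of $A$ in terms of that of $S$.
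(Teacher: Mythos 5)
Your proof is correct and takes essentially the same route as the paper: apply semialgebraic triviality to $\pi|_S$, pick one point $x_j$ in each reference fiber $F_j$, and take the union of the images $\varphi_j(B_j\times\{x_j\})$, which has dimension at most $\ell$. The added discussion about making the finite choice of the $x_j$ definable (e.g.\ by a lexicographic rule) is harmless but not needed, as you yourself observe, since only finitely many points are selected.
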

\begin{proof}
Let $S\subset \R^{n}$ be a semialgebraic set and $\pi:\R^n\to \R^\ell$ the projection. Then, by semialgebraic triviality there exists a finite partition of
$\pi(S)=\sqcup_{j=1}^b B_j$ into semialgebraic sets and for every $j=1, \ldots, b$ there exists fibers $F_j=\pi^{-1}(y_j)\subset S$, with $y_j\in B_j$, and semialgebraic homeomorphisms $\varphi_j:B_j\times F_j\to \pi^{-1}(B_j)\cap S$ making the corresponding diagram commutative. For every $j=1, \ldots, b$ choose an element $x_j\in F_j$. The definable choice of $S$ over $\pi(S)$ is the set
	\begin{equation}
\bigcup_{j=1}^b\varphi_j
\left( B_j \times \{x_j\}  \right),
	\end{equation} 
	concluding the proof.
\end{proof}
\begin{remark}\label{remark:exponential}
Note that the complexity of a set built as in the proof of \cref{coro:exist} depends on the complexity of the objects involved in the semialgebraic triviality. This uses the so--called cylindrical algebraic decomposition and it is known to be doubly exponential in the number of variables of $S$ (see \cite{davenportheintz, browndavenport}).
\end{remark}

\subsubsection{Dimension and stratifications}\label{item1useful}
 Every semialgebraic subset of $R^n$ can be written as a finite union of semialgebraic sets, each of them semialgebraically homeomorphic to an open cube $(0,1)^m\subset R^m$, for some $m \leq n$ (\cite[Thm.\ 2.3.6]{BCR}). This allows to define the \emph{dimension} of a semialgebraic set as the maximum of the dimensions $m$ of these cubes. In fact, it is possible to introduce the notion of smoothness also over general real closed fields, and every semialgebraic set can be written as a finite union of smooth, semialgebraic disjoint manifolds called \emph{strata} (\cite[Prop.\ 9.1.8]{BCR}). This is called a \emph{Nash stratification}. We will use this last result only in the classical case $R=\R.$

The dimension of a semialgebraic set is preserved by semialgebraic homeomorphisms and behaves naturally under Cartesian product: 
\begin{equation}\label{eq:dimension}\dim(A\times B)=\dim(A)+\dim(B).\end{equation}
Moreover, if $f:A\to B$ is a continuous semialgebraic map, then
\begin{equation}\label{eq:dimension2}
\dim(f(A))\leq \dim(A).
\end{equation}

We will also need a stronger notion of dimension, introduced in \cite{BR}.
\begin{definition}[Strong dimension]\label{def:stronglydimk}
Let $S\subset R^{n}$ be a semialgebraic set and $1\leq \ell\leq n$. We say that $S$ is \emph{strongly of dimension $\leq \ell$} if, letting $\pi_\ell: R^{n} \to R^\ell$ be the projection on the last $\ell$ coordinates, it holds
\[\label{eq:stronglydim-def}
\forall y\in R^\ell\quad \#\left(S \cap \pi_{\ell}^{-1}(y)\right)<\infty.
\]
\end{definition}
It follows from semialgebraic triviality (see \cref{item2useful}) that if \eqref{eq:stronglydim-def} holds, then
\[\label{eq:stronglydim-def2}
\sup_{y\in R^\ell}  \#\left(S \cap \pi_{\ell}^{-1}(y)\right)<\infty,
\]
so that in \cref{def:stronglydimk} one can replace \eqref{eq:stronglydim-def} with the apparently stronger condition \eqref{eq:stronglydim-def2}. 

Moreover, if $S$ is strongly of dimension $\leq \ell$, then $\dim(S)\leq \ell$. However, a semialgebraic set $S$ of dimension $\ell$ may not be strongly of dimension $\leq \ell$, as this depends on the relative position between $S$ and the projecting subspace $R^\ell$. Nevertheless we have the following result.

\begin{proposition}[Genericity of strong dimension]\label{propo:mather}
Let $S\subset R^{n}$ be a semialgebraic set of dimension $\ell$. The set of invertible linear transformations $L:R^{n}\to R^{n}$ such that $L(S)$ is strongly of dimension $\leq \ell$ is semialgebraic and dense.
\end{proposition}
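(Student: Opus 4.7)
The plan is to handle the two assertions separately. For semialgebraicity, I will express ``$L(S)$ is strongly of dimension $\leq \ell$'' as a first-order formula in the entries of $L$: the condition amounts to requiring that every fiber of $\pi_\ell \circ L|_S$ be finite, equivalently zero-dimensional (a semialgebraic set of dimension $0$ is finite by cell decomposition), and fiber-dimension of a semialgebraic family is definable in the language of ordered fields. Tarski--Seidenberg then yields semialgebraicity of the locus of good $L$.

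For density, my strategy is to apply Noether normalization to the Zariski closure $V \subset R^n$ of $S$. After checking that $\dim V = \dim S = \ell$ (see below), Noether normalization furnishes a non-empty Zariski-open subset $W \subset \mathrm{Hom}(R^n, R^\ell) \cong R^{n\ell}$ of linear maps $\Lambda$ for which $\Lambda|_V : V \to R^\ell$ is a finite morphism; in particular every fiber $\Lambda^{-1}(y) \cap V$ is a finite set of uniformly bounded cardinality.

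The next step is to transport this genericity back to $\mathrm{GL}(n, R)$. The algebraic map $\Phi : \mathrm{GL}(n, R) \to \mathrm{Hom}(R^n, R^\ell)$, $L \mapsto \pi_\ell \circ L$, extracts the last $\ell$ rows of $L$ and is dominant onto the Zariski-dense subset of rank-$\ell$ linear maps. Hence $\Phi^{-1}(W)$ is a non-empty Zariski-open subset of the Zariski-irreducible variety $\mathrm{GL}(n, R) \subset R^{n^2}$, and is consequently dense in the Euclidean topology -- a proper algebraic subset of $R^{n^2}$ has empty Euclidean interior over any real closed field, since a polynomial vanishing on a ball must vanish identically. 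For any $L \in \Phi^{-1}(W)$, the restriction $\pi_\ell L|_S$ inherits finite fibers from $\pi_\ell L|_V$, giving that $L(S)$ is strongly of dimension $\leq \ell$.

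The main obstacle I expect is the equality $\dim V = \dim S$: one must rule out the scenario where the vanishing ideal $I(S) \subset R[x_1, \dots, x_n]$ cuts out an algebraic set of strictly larger Krull dimension than the real dimension of $S$. The clean route is a Nash stratification $S = \bigcup_\alpha M_\alpha$ combined with the fact that the Zariski closure of a smooth semialgebraic manifold $M_\alpha$ of real dimension $d_\alpha$ is an irreducible algebraic variety of Krull dimension $d_\alpha$; since $V = \bigcup_\alpha \overline{M_\alpha}^{\mathrm{Zar}}$ is a finite union and $\max_\alpha d_\alpha = \ell$, this yields $\dim V = \ell$. Should this step become delicate for a general real closed $R$, a robust fallback is to prove density first over $R = \mathbb{R}$ by a Sard-type recursion on the Nash stratification (iteratively controlling critical values by passing to the lower-dimensional critical locus), and then transfer to arbitrary $R$ via the Tarski transfer principle, using semialgebraic triviality to secure a uniform bound on fiber cardinality in terms of the diagram $D(S)$.
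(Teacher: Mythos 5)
Your proof is correct, and it takes a genuinely different route from the paper. The paper first works over $R=\R$, applies Mather's transversality theorem (from Thom--Boardman theory) to each stratum of a Nash stratification to conclude that a generic linear projection restricts to a stratified immersion (hence has discrete, therefore finite, semialgebraic fibers), then transports the statement to an arbitrary real closed field by the Tarski transfer principle. You instead pass to the Zariski closure $V=\overline{S}^{\mathrm{Zar}}$, observe $\dim V=\dim S=\ell$ (this is \cite[Prop.\ 2.8.2]{BCR}; the Nash-stratification detour in your write-up is not strictly necessary), and invoke Noether normalization: the locus of linear maps $\Lambda\colon R^n\to R^\ell$ with $\Lambda|_V$ finite is a non-empty Zariski-open subset of $\mathrm{Hom}(R^n,R^\ell)$ defined over $R$, because the condition is that $\mathbb{P}(\ker\Lambda)$ at infinity misses $\overline{V}\cap H_\infty$, which is Zariski-open and non-vacuous by dimension count. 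Pulling back along $L\mapsto\pi_\ell\circ L$ gives a non-empty Zariski-open, hence Euclidean-dense, subset of $\mathrm{GL}(n,R)$, all of whose elements work since finiteness of the morphism $\Lambda|_V$ forces every $R$-fiber to be finite (indeed, of cardinality uniformly bounded by the number of module generators in the normalization).

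Two remarks on the comparison. Your approach is purely algebraic, works uniformly over any real closed field without invoking the transfer principle, and even yields a uniform bound on fiber cardinality for free. The paper's approach uses more machinery (transversality, jet spaces) but stays entirely within semialgebraic/differential-topological language familiar to the paper's audience and gives a slightly stronger generic conclusion (stratified immersion, not just finite fibers). Your ``fallback'' paragraph -- Sard-type recursion on the Nash stratification followed by transfer -- is in fact essentially the paper's actual argument, so the Noether-normalization route is the substantive novelty in your proposal. One small caution worth flagging if you write this up: when invoking $\dim_{\mathrm{Krull}} R[V]=\ell$ and then arguing geometrically over $\overline{R}$, you should note that Krull dimension of a finitely generated algebra is preserved under base change to $\overline{R}$ (this itself follows from Noether normalization), so the projective-closure dimension count is legitimate; otherwise the argument is complete.
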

\begin{proof}
We first prove the case $R=\R$. The set of such linear transformations is semialgebraic, as it admits a semialgebraic description in the coefficients of $L$.

To prove density, let $M\subset \R^{n}$ be a smooth $p$--dimensional manifold with $p\leq \ell$. Given $r\in \N$ and a Thom--Boardman manifold $\Sigma\subset J^{r}(M, \R^\ell)$, it follows from \cite[Thm. 2]{Mather} that the set of linear maps $T:\R^{n}\to \R^\ell$ that restrict to maps $T|_M:M\to \R^\ell$ which are transversal to $\Sigma$  is dense. In particular, the set of linear maps $T:\R^{n}\to \R^\ell$ such that $T|_{M}$ is a stratified immersion is dense. The fibers of every such map are discrete and every such map can be obtained as $T=\pi_\ell\circ L$ for an appropriate $L:\R^{n}\to \R^{n}$.  Therefore, for every smooth $p$--dimensional manifold $M\subset \R^{n}$, with $p\leq \ell$, the set  ${\mathcal{L}}(M)$ of linear maps $L:\R^{n}\to \R^{n}$ such that the fibers of the map $\pi_\ell|_{L(M)}:L(M)\to \R^\ell$ are discrete is dense. When $M$ is semialgebraic, the set ${\mathcal{L}}(M)$ is also semialgebraic and therefore it contains an open dense set.

Let  $S=\sqcup_{i=1}^a M_i$ be a Nash stratification of $S$. Applying the above argument to each stratum $M_i$, we see that the set ${\mathcal{L}}(S):={\mathcal{L}}(M_1)\cap\cdots \cap {\mathcal{L}}(M_a)$ is semialgebraic and contains an open dense set. For every $L\in {\mathcal{L}}(S)$ the fibers of $\pi_\ell|_{L(S)}:L(S)\to \R^{\ell}$ are discrete and semialgebraic, therefore they are finite. Hence $L(S)$ is strongly of dimension $\leq \ell$. This concludes the proof for the case $R=\R$. 

The statement we just proved can be written as a first--order formula with coefficients in $\R$. Therefore, the same statement holds if $\R$ is replaced by a real closed extension $R$ of it, by  \cite[Prop.\ 5.2.3]{BCR}. (This is the so-called \emph{transfer principle} from real algebraic geometry:  every property that can be expressed in the first--order language of ordered fields with coefficients in $R$ can be transferred to any real closed extension of $R$.)
\end{proof}

%
\subsubsection{Thom-Milnor bound}\label{item:milnorbound}
For every $c\in \N$ there exists $\beta_{\mathrm{tm}}=\beta_{\mathrm{tm}}(c)>1$ such that, if $S$ is a semialgebraic set with diagram $D(S)=(n, c, d)$, then the number of connected components of $S$, denoted by $b_0(S)$, can be bounded by (\cite[Thm.\ 7.50]{BasuPoRoyBook}):
\begin{equation}\label{eq:milnorbound}
b_0(S)\leq \beta_{\mathrm{tm}}d^n.
\end{equation}

\subsection{Hausdorff approximations using infinitesimals}\label{sec:Hsa}

Using Puiseux series we can produce Hausdorff approximations of semialgebraic sets in $\R^n$ as ``specializations'' of semialgebraic sets defined on an extension $R^n$ which contains some ``infinitesimals'', and this can be done in a controlled way. Let us explain this idea.

\begin{example}\label{example:neighbourhood}Given a set $A\subset \R^n$, for $r>0$ denote its $r$--neighbourhood in $\R^n$ by
\begin{equation}\label{eq:neighbourhood}
\mathcal{U}_r(A):=\left\{x\in \R^n\,\bigg|\, \exists a=(a_1, \ldots, a_n)\in A \quad \text{s.t.} \quad \sum_{i=1}^n(a_i-x_i)^2 \leq r^2\right\}.
\end{equation}
If $A$ is semialgebraic, defined by a first order formula $\phi$, then $\mathcal{U}_r(A)$ is defined by the first order formula 
\begin{equation}
\phi_r:=\left(\exists a \left( \phi(a) \land  \sum_{i=1}^n(a_i-x_i)^2\leq r^2\right)\right).
\end{equation}
Instead of interpreting $\{\phi_r(x)\}_{r>0}$ as a \emph{family} of first order formulas with coefficients in $\R$, we can interpret it as a single first order formula $\psi$ with coefficients in $R=\R\la \zeta\ra$
\begin{equation}\label{eq:neighbzeta}
\psi:=\left(\exists a \left( \phi(a) \land \sum_{i=1}^n(a_i-x_i)^2\leq \zeta^2\right) \right).
\end{equation}
The set $S\subset R^n$ defined by the formula $\psi$ encodes the family of neighbourhoods of the original semialgebraic set $A$ in a semialgebraic way. Note that the $r$--neighbourhood of $A$ is described by the  formula obtained from $S$  by ``evaluating'' the infinitesimal $\zeta$ at $r$. 
\end{example}

\subsubsection{The evaluation map and the map ${\Lim}_0$}
 The procedure described in \cref{example:neighbourhood} is a special case of a general construction of ``evaluation'' of a semialgebraic set of Puiseux series. First, we provide the following definition, inspired by \cite[Def.\ 1.7]{gavo}, to formalize the concept of properties that are true for ``sufficiently small'' nested sequences of Puiseux series. The intricacy of the statements for the case $m>1$ is a consequence of the iterative definition of Puiseux series $R\la\zeta_1,\dots,\zeta_m\ra$ for several infinitesimals.

\begin{definition}[Predicates for sufficiently small Puiseux series]\label{def:suffsmall}
Let $R$ be a real closed field, $1\leq k\leq m$, and let $\mathscr{P}=\mathscr{P}(t_k, \dots, t_m)$ be a property where $t_j\in R\la\zeta_1, \ldots, \zeta_{j-1}\ra$ for $k\leq j\leq m$. We say that the property $\mathscr{P}$ holds for \[0<t_k\ll\cdots \ll t_{m}\ll1\]
if there exists $\delta_m\in R\la \zeta_1, \ldots, \zeta_{m-1}\ra$ with $\delta_m>0$ such that for $t_m\in R\la \zeta_1, \ldots, \zeta_{m-1}\ra$ with $0<t_m<\delta_m$ there exists $\delta_{m-1}\in R\la \zeta_1, \ldots, \zeta_{m-2}\ra$ with $\delta_{m-1}>0$ such that for all $t_{m-1}\in R\la \zeta_1, \ldots, \zeta_{m-2}\ra$ with $0<t_{m-1}<\delta_{m-1}$ there exists (...) $\delta_k\in R\la \zeta_1, \ldots, \zeta_{k-1}\ra$ with $\delta_k>0$ such that for all $t_k\in R\la \zeta_{1}, \ldots \zeta_{k-1}\ra$ with $0<t_k<\delta_k$ we have $\mathscr{P}(t_k, \ldots, t_m)$.
\end{definition}
\begin{definition}[The evaluation map]\label{def:Lt}
Let $R$ be a real closed field and let $\psi$ be a formula with $n$ free variables and with coefficients in $R\la\zeta_1, \ldots, \zeta_m\ra$. For every $1\leq k\leq m$ and for every $(t_k, \ldots, t_m)$ with $t_j\in R\la\zeta_1, \ldots, \zeta_{j-1}\ra$ for $j=k, \ldots, m$, we denote by $\psi|_{\zeta_k=t_k, \ldots, \zeta_m=t_m}$ the formula with coefficients in $R\la\zeta_1, \ldots, \zeta_{k-1}\ra$ that is obtained from $\psi$ by replacing iteratively in its coefficients each $\zeta_j$  by $t_j$, starting from $j=m$. This can be done provided that, at each step, $t_j$ is sufficiently small so that the coefficients of the formula coming from the previous step (which are finitely many and which we see as germs of continuous semialgebraic functions in $\zeta_j$) have a representative defined on a common interval containing $t_j$, yielding a Puiseux series in $j-1$ infinitesimals. If $S\subset R\langle\zeta_1, \ldots, \zeta_m\rangle^n$ is the semialgebraic set defined  by $\psi$, we denote by $S|_{t_k, \ldots, t_m}\subset R\la\zeta_1, \ldots, \zeta_{k-1}\ra^n$ the semialgebraic set defined by the formula $\psi|_{\zeta_k=t_k, \ldots, \zeta_m=t_m}$, i.e.
\[\label{eq:defS_t}
S|_{t_k, \ldots, t_m}:=\mathrm{Reali}\bigg(\psi|_{\zeta_k=t_k, \ldots, \zeta_m=t_m}; R\la\zeta_1, \ldots, \zeta_{k-1}\ra\bigg)\subset R\la\zeta_1, \ldots, \zeta_{k-1}\ra^n,
\]
provided that the right hand side of \eqref{eq:defS_t} is defined. In particular, this is the case for $0<t_k\ll\dots \ll t_{m}\ll1$. The set $S|_{t_k, \ldots, t_m}$ is called the \emph{evaluation} of $S$ (at $t_k,\dots,t_m$).
\end{definition}
\begin{remark}\label{remark:L1}
Indeed, \eqref{eq:defS_t} depends on the formula $\psi$ defining $S$; for us the formula will be clear and this abuse of notation will create no harm.
\end{remark}
\begin{remark}[Polynomial coefficients]\label{remark:L2}
If the coefficients of the formula $\psi$ are \emph{polynomials} in the infinitesimals (and not general algebraic Puiseux series), then \eqref{eq:defS_t} is defined for any $(t_k,\dots,t_m)$ with $t_j \in R[\zeta_1,\dots,\zeta_{j-1}]\subset R\la\zeta_1,\dots,\zeta_{j-1}\ra$. In this case, at the $i$--th step of the process one gets a new formula with coefficients in $R[\zeta_1, \ldots, \zeta_{m+i-2}]$; therefore the process can be iterated and the concept of ``sufficiently small'' from \cref{def:suffsmall} can be dispensed of. This is the approach followed in \cite{BasuLerario} in the case of one infinitesimal ($m=1$). Our general formulation, albeit defined only for $0<t_k\ll\dots\ll t_m\ll 1$, is more flexible and technically necessary. Notably, the key \cref{lemma:cardinalitaFormula-one,lemma:cardinalitaFormula} require evaluation at general Puiseux series.
\end{remark}

\begin{remark}[Evaluations and germs]\label{remark:distinct}
Let $S=\mathrm{Reali}(\psi;R\la \zeta\ra)\subset R\la \zeta\ra^n$ be a semialgebraic set. Recall from \cref{rem:iso} that the germ of a continuous semialgebraic function $g:(0, \delta)\to R$ can be identified to an element of $R\la \zeta\ra$ (we denote this germ still by $g$). Then, using the notation from \cref{def:Lt}, the following property is true by construction:
\[\label{property}
\exists \delta>0\quad \forall r\in (0,\delta)\quad g(r)\in  S|_r \quad \iff g\in S.
\]
\end{remark}	

\begin{lemma}[Evaluation preserves the diagram]\label{lem:diagramext}
In the same setting of \cref{def:Lt}, for every $1\leq k\leq m$  it holds
\begin{equation}\label{eq:diagramext}
D(S|_{t_k,\dots,t_m})= D(S),
\end{equation}
for all $(t_k,\dots,t_m)$ such that the evaluation is defined.
\end{lemma}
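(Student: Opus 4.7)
The plan is to unwind the definitions and observe that substitution of infinitesimals acts on the coefficients of the defining polynomials without touching the free variables $x_1,\dots,x_n$, so none of the three numerical invariants recorded by the diagram can grow.

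Concretely, fix a representation of $S$ realizing the diagram bound: by \cref{def:diagram}, there exist polynomials $p_{ij}\in R\la\zeta_1,\dots,\zeta_m\ra[x_1,\dots,x_n]$ and signs $\sigma_{ij}\in\{0,\pm1\}$ such that
\begin{equation}
S=\bigcup_{i=1}^{a}\bigcap_{j=1}^{b_i}\bigl\{x\in R\la\zeta_1,\dots,\zeta_m\ra^n \mid \mathrm{sign}(p_{ij}(x))=\sigma_{ij}\bigr\},
\end{equation}
with $n'\leq n$ free variables, $a\cdot\max_i b_i\leq c$, and $\max_{i,j}\deg(p_{ij})\leq d$, where $D(S)=(n,c,d)$. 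Let $\psi$ be the corresponding first--order formula. First I would observe that, by \cref{def:Lt}, the formula $\psi|_{\zeta_k=t_k,\dots,\zeta_m=t_m}$ is obtained simply by substituting the values $t_k,\dots,t_m$ for the infinitesimals $\zeta_k,\dots,\zeta_m$ in each \emph{coefficient} of each $p_{ij}$; the atomic predicates $\mathrm{sign}(\,\cdot\,)=\sigma_{ij}$ and the Boolean structure of $\psi$ are unchanged.

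Next I would note that the substitution produces polynomials
\begin{equation}
\tilde p_{ij}(x):=p_{ij}(x)\bigl|_{\zeta_k=t_k,\dots,\zeta_m=t_m}\in R\la\zeta_1,\dots,\zeta_{k-1}\ra[x_1,\dots,x_n],
\end{equation}
and that $\deg_{x}(\tilde p_{ij})\leq \deg_x(p_{ij})\leq d$ (the degree in $x$ can only drop if some leading coefficient specializes to $0$). The number of polynomials is the same, the index bounds $a$ and $b_i$ are unchanged, and the number of free variables is still $n'\leq n$. Hence
\begin{equation}
S|_{t_k,\dots,t_m}=\mathrm{Reali}(\psi|_{\zeta_k=t_k,\dots,\zeta_m=t_m}; R\la\zeta_1,\dots,\zeta_{k-1}\ra)=\bigcup_{i=1}^{a}\bigcap_{j=1}^{b_i}\bigl\{\mathrm{sign}(\tilde p_{ij})=\sigma_{ij}\bigr\}
\end{equation}
is a representation of the evaluated set with the same three numerical invariants bounded by $(n,c,d)$, so $D(S|_{t_k,\dots,t_m})=(n,c,d)=D(S)$ in the sense of \cref{def:diagram}.

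The only subtlety is checking that the substitution is actually legal: one needs the chosen representative germs of the coefficients of $\psi$ to be simultaneously defined at the evaluation point $(t_k,\dots,t_m)$. This is exactly the content of \cref{def:Lt}: the hypothesis ``the evaluation is defined'' (which is in particular guaranteed for $0<t_k\ll\cdots\ll t_m\ll 1$) ensures that every coefficient of every $p_{ij}$ in the fixed representation admits a representative defined at $t_j$, and so the rewriting above is legitimate. I do not anticipate any real obstacle beyond bookkeeping; the proof is essentially the observation that the diagram is insensitive to the ground field of the coefficients.
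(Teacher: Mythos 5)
Your proof is correct and follows the same route as the paper's (which is a one-line observation that substituting $\zeta_j\mapsto t_j$ in the coefficients of a representation of $S$ yields a representation of $S|_{t_k,\dots,t_m}$ with the same or smaller combinatorial data). You simply spell out more carefully why each of the three invariants in the diagram cannot increase and why the substitution is legal; the added detail is accurate but not a different argument.
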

\begin{proof}
The statement follows from the fact that a representation of $S|_{t_1,\dots,t_m}$ is obtained from a representation of $S$ by replacing $\zeta_j$ with $t_j$ in the coefficients, in the prescribed order. 
\end{proof}

{ The strong dimension condition of \cref{def:stronglydimk} is not increased by small evaluations.
\begin{lemma}[Small evaluations do not increase the strong dimension: one infinitesimal]\label{lemma:cardinalitaFormula-one}
In the same setting of \cref{def:Lt}, with $m=1$. Let $\pi_\ell : R\la\zeta\ra^{n}\to R\la\zeta\ra^\ell$ be the projection on the last $\ell$ coordinates, $1\leq \ell\leq n$. Then, there exists $\delta \in R$, $\delta>0$ such that for all $t \in R$ with $0<t<\delta$ it holds
\[\label{eq:monoS}
\sup_{y \in R^\ell} \# S|_{t}\cap\pi_\ell^{-1}(y) \leq \sup_{w \in R\la \zeta\ra^\ell} \# S\cap\pi_\ell^{-1}(w).
\]
In particular, if $S\subset R\la \zeta\ra^n$ is strongly of dimension $\leq \ell$, then the set $S|_{t}$ is also strongly of dimension $\leq \ell$.
\end{lemma}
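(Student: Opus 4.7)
The inequality \eqref{eq:monoS} is trivial if its right-hand side $N := \sup_{w\in R\la\zeta\ra^\ell} \#\,(S \cap \pi_\ell^{-1}(w))$ is infinite, so I shall assume $N < \infty$. The strategy is to argue by contradiction: suppose \eqref{eq:monoS} failed on arbitrarily small positive $t \in R$. Choose $\delta_0 > 0$ small enough that the coefficients of a formula defining $S$, interpreted via \cref{rem:iso} as germs of continuous semialgebraic functions, admit common representatives on $(0,\delta_0)$; this exhibits $\{S|_t\}_{t\in(0,\delta_0)}$ as a single semialgebraic family parametrised by $t$. Then the ``bad set''
\[
C := \left\{t \in (0, \delta_0) \;\Big|\; \sup_{y\in R^\ell}\; \# \, (S|_t \cap \pi_\ell^{-1}(y)) > N \right\}
\]
is a semialgebraic subset of $R$. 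Since semialgebraic subsets of $R$ are finite unions of intervals and points, if $C$ accumulates at $0$ it must contain an entire sub-interval $(0, \delta') \subseteq C$.

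Next I would apply semialgebraic triviality (\cref{item2useful}) to the projection onto $(0, \delta')$ of the nonempty semialgebraic family
\[
\bigl\{(x^{(1)},\dots,x^{(N+1)};t) \,:\, t\in (0,\delta'),\; x^{(i)} \in S|_t \text{ pairwise distinct},\; \pi_\ell(x^{(i)}) = \pi_\ell(x^{(j)})\bigr\}.
\]
On a trivialising piece of $(0, \delta')$ accumulating at $0$---hence containing some interval $(0, \delta'')$---this produces continuous semialgebraic selections $x^{(1)}(t), \ldots, x^{(N+1)}(t)$ satisfying the required conditions for every $t \in (0, \delta'')$.

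Via the identification of \cref{rem:iso}, these selections determine elements $\xi^{(i)} \in R\la\zeta\ra^n$. Property \eqref{property} gives $\xi^{(i)} \in S$, and the pointwise equality of $\pi_\ell$-images on $(0,\delta'')$ upgrades to the germ-level equality $\pi_\ell(\xi^{(i)}) = \pi_\ell(\xi^{(j)})$. The distinctness of the $\xi^{(i)}$ as Puiseux series is the one point requiring care: $\|x^{(i)}(t) - x^{(j)}(t)\|^2$ is a strictly positive continuous semialgebraic function on $(0, \delta'')$, so its germ at $0^+$ is a nonzero positive element of $R\la\zeta\ra$, whence $\xi^{(i)} \neq \xi^{(j)}$. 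This exhibits $N+1$ distinct elements of $S$ lying in a common fibre of $\pi_\ell$, contradicting the definition of $N$. The ``in particular'' assertion then follows by taking $N$ to be the uniform fibre bound \eqref{eq:stronglydim-def2}, which is finite exactly when $S$ is strongly of dimension $\leq \ell$. The main subtle step, in my view, is the passage from the continuous semialgebraic selections on $(0, \delta'')$ to genuinely distinct Puiseux series in $R\la\zeta\ra^n$; the remaining steps are fairly direct applications of semialgebraic triviality and of the germ identification.
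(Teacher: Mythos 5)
Your proof is correct, and it follows the same broad strategy as the paper: argue by contradiction, observe that the failure locus is a one-dimensional semialgebraic set and hence contains an interval $(0,\delta')$, apply semialgebraic triviality to obtain continuous semialgebraic selections, and then pass to germs via \cref{rem:iso} and \eqref{property} to produce $N+1$ distinct points of $S$ in a common $\pi_\ell$-fibre. The one genuine difference is organizational: the paper performs the selection in two stages --- it first trivializes the set $D=\{(y,t): \#(S|_t\cap\pi_\ell^{-1}(y))>N\}$ to extract a curve $y_0(t)$, passes to the Puiseux series $y_0\in R\la\zeta\ra^\ell$, rewrites $S|_t\cap\pi_\ell^{-1}(y_0(t))$ as the evaluation $(S\cap\pi_\ell^{-1}(y_0))|_t$ (this rewriting is precisely why evaluation at formulas with general Puiseux coefficients is needed, cf.\ \cref{remark:L2}), and only then trivializes a second family to pull out $N+1$ curves in that fibre. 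You instead trivialize, in one shot, the family of $(N+1)$-tuples of pairwise distinct points of $S|_t$ lying over a common $\pi_\ell$-fibre; this avoids the intermediate selection of $y_0$ and the extra evaluation step. Both routes end at the same contradiction. Your handling of the subtle points --- that the bad set $C\subset R$ is semialgebraic and, accumulating at $0$, must contain an interval (valid over any real closed field by the decomposition of semialgebraic subsets of $R$ into points and intervals), and that distinctness of the germs follows because $\|x^{(i)}(t)-x^{(j)}(t)\|^2$ is strictly positive on a right-neighbourhood of $0$ and hence defines a nonzero element of $R\la\zeta\ra$ --- is sound.
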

\begin{proof}
In the proof, set $\pi = \pi_\ell$. Let
\[\label{eq:supN}
N:=\sup_{w\in R\la\zeta\ra^\ell}\#\left(S\cap \pi^{-1}(w) \right).
\]
If $N=\infty$ there is nothing to prove, then assume $N<\infty$. 

Assume by contradiction that for all $\delta\in R$, $\delta>0$, there exists $t\in R$, $0<t<\delta$, and $y\in R^\ell$ such that
\[
\#S|_t\cap \pi^{-1}(y) >N.
\]
Consider the semialgebraic set $D\subset R^\ell\times R_+$ defined by
\[
D:=\left\{(y,t)\in R^\ell\times R_+\mid \# S|_t \cap \pi^{-1}(y) > N\right\}.
\]
Let $P_2: D \to R_+$ be the projection on the second factor. By our assumption the image $P_2(D)\subset R_+$ contains a sequence accumulating to $0$. By semialgebraic triviality (\cref{item2useful}) there exists $\delta_0\in R_+$, $\delta_0>0$, a point $t_0\in (0,\delta_0)$ a fiber $F_0 = P_2^{-1}(t_0)$ and a semialgebraic homeomorphism $\Phi: (0,\delta_0)\times F_0 \to P_2^{-1}((0,\delta_0))$ such that the following diagram is commutative
\begin{equation}
\begin{tikzcd}
(0,\delta_0)\times F_0 \arrow[rr, "\Phi"] \arrow[rd, "p_1"'] &     & P_2^{-1}((0,\delta_0)) \arrow[ld, "P_2"] \\
                                                      & (0,\delta_0) &                            
\end{tikzcd}
\end{equation}
Fix an element $f_0\in F_0$ and consider the function $y_0:(0,\delta_0)\to R^\ell$ given by
\[
y_0(t):= P_1\circ \Phi(t,f_0),
\]
where $P_1 : D \to R^\ell$ denotes the projection in the first factor. The (components of the) function $y_0$ are semialgebraic and continuous curves on the right of the origin, thus the corresponding germs (which we denote with the same symbol) define algebraic Puiseux series $y_0 \in R\la\zeta\ra^\ell$. Furthermore, we note that by construction $(y_0(t),t) \in D$ for all $t\in (0,\delta_0)$ so that
\[\label{eq:fiber0}
\forall t\in (0,\delta_0), \qquad \#\left(S|_t\cap \pi^{-1}(y_0(t))\right) > N.
\]
Denoting by $(v,w)\in R\la \zeta\ra^{n} = R\la \zeta\ra^{n-\ell}\times R\la \zeta\ra^{\ell}$, the set $\pi^{-1}(y_0(t))\subset R^{n}$ can be seen as the evaluation at $t$ of the set $\pi^{-1}(y_0) \in R\la\zeta\ra^n$, defined by the formula $(w=y_0)$ with variables $(v,w)$ and coefficients in $R\la\zeta\ra$.

Thus for $0<t\ll 1$ (which means, according to \cref{def:suffsmall}, up to taking a smaller $\delta_0$, for all $0<t<\delta_0$), it holds
\begin{align}
\left.\left(S\cap \pi^{-1}(y_0)\right)\right|_t & = \mathrm{Reali}\Big((\psi\cap (w=y_0))|_{\zeta = t};R\Big) \\
& = \mathrm{Reali}\Big(\psi|_{\zeta =t};R\Big)\cap \mathrm{Reali}\Big((w=y_0)|_{\zeta =t};R\Big) \\
& = S|_t \cap \pi^{-1}(y_0(t)).
\end{align}

In particular from \eqref{eq:fiber0} we obtain
\[\label{eq:fiber}
 \forall t\in (0,\delta_0) \qquad \# \left.\left(S\cap \pi^{-1}(y_0)\right)\right|_t  = \#\left(S|_t\cap \pi^{-1}(y_0(t))\right) > N.
\]
Consider then the semialgebraic set $T\subset R^n\times R_+$ defined by
\[
T:=\left\{ (z,t)\in R^n\times R_+\,\Big|\, z\in\left.\left(S \cap \pi^{-1}(y_0)\right)\right|_{t}\right\}.
\]
Denote by $P: T\to R_+$ the projection on the last factor. By semialgebraic triviality, for $\delta_0>0$ small enough, we find a semialgebraic homeomorphism
\[
\varphi: (0,\delta_0)\times H \to P^{-1}((0,\delta_0)),
\]
where $H=P^{-1}(\bar{t})$ for some $\bar{t}\in (0,\delta_0)$. By \eqref{eq:fiber} such a fiber must have cardinality $>N$. Pick $N+1$ distinct points $h_1,\dots,h_{N+1}\in H$. Denote by $Q:T\to R^n$ the projection on the first factor and define for $j=1,\dots,N+1$ continuous semialgebraic curves $g_j:(0,\delta_0)\to R^n$ by
\[
g_j(t):=Q(\varphi(t,h_j)).
\]
By construction, these curves satisfy
\[
\forall t\in (0,\delta_0)\quad g_j(t)\in \left.\left(S\cap \pi^{-1}(y_0)\right)\right|_{t}.
\]
Therefore (the germ of) each $g_j \in S \cap\pi^{-1}(y_0)\subset R\la\zeta\ra^n$, by \eqref{property}. Since the elements $h_j$ are distinct, the germs of these curves near zero represent $N+1$ distinct points in $S\cap \pi^{-1}(y_0)$, contradicting \eqref{eq:supN}.

We have proved that there exists $\delta>0$ such that for all $t\in R$ with $0<t<\delta$ it holds
\[
\sup_{y\in R^\ell} \#S|_t\cap \pi^{-1}(y)\leq \sup_{w\in R\la\zeta\ra^\ell} \# S\cap \pi^{-1}(w).
\]
which is the statement.
\end{proof}

We will need a version of \cref{lemma:cardinalitaFormula-one} for multiple infinitesimals ($m>1$).
\begin{lemma}[Small evaluations do not increase the strong dimension: several infinitesimals]\label{lemma:cardinalitaFormula}
In the same setting of \cref{def:Lt}, let $\pi_\ell : R\la\zeta_1,\dots,\zeta_m\ra^{n}\to R\la\zeta_1,\dots,\zeta_m\ra^\ell$ be the projection on the last $\ell$ coordinates, $1\leq \ell\leq n$, and let $1\leq k\leq m$. Then for $0<t_k\ll\cdots \ll t_m\ll 1$ it holds
\[\label{eq:monoS}
\sup_{y \in R\la \zeta_1,\dots,\zeta_{k-1}\ra^\ell} \# S|_{t_k,\dots,t_m}\cap\pi_\ell^{-1}(y) \leq \sup_{w \in R\la \zeta_1,\dots,\zeta_{m}\ra^\ell} \# S\cap\pi_\ell^{-1}(w).
\]
In particular, if $S\subset R\la \zeta_1,\dots,\zeta_m\ra^n$ is strongly of dimension $\leq \ell$, then  for $0<t_k\ll\cdots \ll t_m\ll 1$ the set $S|_{t_k,\dots,t_m}$ is also strongly of dimension $\leq \ell$.
\end{lemma}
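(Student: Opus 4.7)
The plan is to reduce the multi-infinitesimal case to the single-infinitesimal case of \cref{lemma:cardinalitaFormula-one} by induction on the number $r=m-k+1$ of infinitesimals being evaluated. The base case $r=1$ (i.e.\ $k=m$) follows by applying \cref{lemma:cardinalitaFormula-one} verbatim with the real closed field $R':=R\la\zeta_1,\dots,\zeta_{m-1}\ra$ in place of $R$, and with the single infinitesimal $\zeta_m$. This is legitimate because $R'$ is itself a real closed field (as noted immediately after \cref{rem:iso}), and it yields an element $\delta_m \in R'$, $\delta_m>0$, such that for every $t_m \in R'$ with $0<t_m<\delta_m$ we have
\[
\sup_{y \in (R')^\ell} \# S|_{t_m}\cap \pi_\ell^{-1}(y) \;\leq\; \sup_{w \in R\la\zeta_1,\dots,\zeta_m\ra^\ell} \# S\cap \pi_\ell^{-1}(w),
\]
which matches exactly the statement for $k=m$ under the quantifier convention of \cref{def:suffsmall}.

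For the inductive step, I would first apply the base case to $S$ with the single infinitesimal $\zeta_m$ (over the base field $R\la\zeta_1,\dots,\zeta_{m-1}\ra$), obtaining $\delta_m \in R\la\zeta_1,\dots,\zeta_{m-1}\ra$ with $\delta_m>0$ such that for all $0<t_m<\delta_m$ the above displayed inequality holds. Then I would fix such a $t_m$ and view $S|_{t_m}$ as a semialgebraic set in $R\la\zeta_1,\dots,\zeta_{m-1}\ra^n$, which is defined by a formula with coefficients in $R\la\zeta_1,\dots,\zeta_{m-1}\ra$. Applying the inductive hypothesis to $S|_{t_m}$ (with $m$ replaced by $m-1$ and the same $k$), one gets that for $0<t_k\ll\cdots\ll t_{m-1}\ll 1$,
\[
\sup_{y \in R\la\zeta_1,\dots,\zeta_{k-1}\ra^\ell} \# (S|_{t_m})|_{t_k,\dots,t_{m-1}}\cap \pi_\ell^{-1}(y) \;\leq\; \sup_{w \in R\la\zeta_1,\dots,\zeta_{m-1}\ra^\ell} \# S|_{t_m}\cap \pi_\ell^{-1}(w).
\]
Since evaluation is iterative by construction (\cref{def:Lt}), $(S|_{t_m})|_{t_k,\dots,t_{m-1}}=S|_{t_k,\dots,t_m}$, and chaining the two inequalities gives the desired bound.

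The main obstacle is ensuring that the nested quantifier structure of \cref{def:suffsmall} is respected throughout the induction. The crucial observation is that the threshold $\delta$ produced by \cref{lemma:cardinalitaFormula-one}, when invoked with base field $R\la\zeta_1,\dots,\zeta_{m-1}\ra$, automatically lies in $R\la\zeta_1,\dots,\zeta_{m-1}\ra$; this is precisely the form of threshold required by the outermost quantifier of \cref{def:suffsmall}. The inner thresholds $\delta_{m-1},\dots,\delta_k$ are then supplied by the inductive hypothesis applied to the (now fixed) set $S|_{t_m}$, yielding the correct nested condition $0<t_k\ll\cdots\ll t_m\ll 1$.

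The final sentence of the lemma—that small evaluations preserve the strong dimension condition—follows immediately from the main inequality: if the right-hand side of the displayed bound is finite (which is the definition of $S$ being strongly of dimension $\leq \ell$), so is the left-hand side for $0<t_k\ll\cdots\ll t_m\ll 1$, which is the definition of $S|_{t_k,\dots,t_m}$ being strongly of dimension $\leq \ell$.
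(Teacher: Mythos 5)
Your proposal is correct and follows essentially the same route as the paper: identify $R\la\zeta_1,\dots,\zeta_m\ra\simeq K\la\zeta_m\ra$ with $K=R\la\zeta_1,\dots,\zeta_{m-1}\ra$, apply \cref{lemma:cardinalitaFormula-one} over the base field $K$ to peel off $\zeta_m$, and iterate. The paper's write-up is terser (``We can now iterate the argument $k$-times''); your version makes the induction on $m-k+1$, the identity $(S|_{t_m})|_{t_k,\dots,t_{m-1}}=S|_{t_k,\dots,t_m}$, and the compatibility with the nested quantifier convention of \cref{def:suffsmall} explicit, but no new idea is introduced.
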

\begin{proof}
Note that \cref{lemma:cardinalitaFormula-one} corresponds to the case $m=1$. To prove the case $m>1$, recall that $R\la\zeta_1,\dots,\zeta_m\ra \simeq K\la\zeta\ra$, with $K:=R\la\zeta_1,\dots,\zeta_{m-1}\ra$. We find $\delta_m\in R\la\zeta_1,\dots,\zeta_{m-1}\ra$, $\delta_m>0$, such that for all $t_m \in R\la\zeta_1,\dots,\zeta_{m-1}\ra$ with $0<t_m<\delta_m$ it holds 
\[
\sup_{y \in R\la\zeta_1,\dots,\zeta_{m-1}\ra^\ell} \#S|_{t_m}\cap \pi^{-1}(y) \leq \sup_{w\in R\la\zeta_1,\dots,\zeta_{m}\ra^\ell} S \cap \pi^{-1}(w).
\]
We can now iterate the argument $k$-times, with $1\leq k\leq m$ to get the statement, using the definition of $S|_{t_k,\dots,t_m}$.
\end{proof}
}

 \begin{definition}[Bounded elements and the limit homomorphism: one infinitesimal]\label{def:boundedone}Let $R$ be a real closed field. An element $s\in R\la\zeta\ra$ is called \emph{bounded over $R$} if $\|s\|\leq r$ for some $r\in R_+$. The subring $R\la\zeta\ra_b$ of elements that are bounded over $R$ consists of algebraic Puiseux series with non-negative exponents.

For all $n\in \mathbb{N}$, we define the  \emph{limit homomorphism}
\[\label{eq:la}\lambda_{\zeta}:R\la\zeta\ra_b^n\to R^n,\]
by the ring homomorphism mapping $\sum_{k\geq0}^{\infty} a_k\zeta^{\frac{k}{m}}$ to $a_0\in R^n$.
\end{definition}

\begin{remark}
Viewing $R\la\zeta\ra$ as the field of germs of semialgebraic functions continuous on the right of zero, the bounded elements corresponds to those germs that have a finite limit as $\zeta\to 0$ and 
\[\lambda_\zeta(f)=\lim_{\zeta\to 0}f(\zeta).\]
From this we see that the map $\lambda_\zeta$ is order preserving, in the following sense: if $f_1,f_2\in R\la\zeta\ra_b$ with $f_1\leq f_2$, then $\lambda_\zeta(f_1)\leq \lambda_\zeta(f_2)$.
\end{remark}

 \begin{definition}[Bounded elements: several infinitesimals]\label{def:boundedseveral}Let $R$ be a real closed field. We say that $f=(f_1, \ldots, f_n)\in  R\la\zeta_1, \ldots, \zeta_m\ra^n$ is \emph{bounded  over $R$} if $\|f\|\leq r$ for some $r\in R_{+}$. 
 \end{definition}
 
\begin{remark}
The subring $R\la\zeta_1, \ldots, \zeta_m\ra^n_b$ of elements that are bounded over $R$ contains algebraic Puiseux series with non-negative exponents. However the inclusion is strict: the Puiseux series $t= \zeta_1^{-1}\zeta_2 \in R\la\zeta_1,\zeta_2\ra$ is such that $\|t\|\leq r$ for all $r\in R_+$, $r>0$.
\end{remark}

 Note that if $R$ is real closed, then on the set of elements of $R\la\zeta_1, \ldots, \zeta_m\ra^n$ that are bounded over $R$ (i.e. with $\|f\|\leq r$ with $r\in R_+$) the composition of maps
\[ R\la\zeta_1, \ldots, \zeta_m\ra_b^n\stackrel{\lambda_{\zeta_m}}{\longrightarrow}R\la\zeta_1, \ldots, \zeta_{m-1}\ra_b^n\stackrel{\lambda_{\zeta_{m-1}}}{\longrightarrow}\cdots R\la\zeta_1\ra_b^n\stackrel{\lambda_{\zeta_1}}{\longrightarrow}R^n\] is well--defined, since at every step we get bounded elements.
\begin{remark}
We stress that the composition above is well--defined only if taken with the order prescribed by the infinitesimals. For instance, let $f\in \R\la \zeta_1, \zeta_2\ra_{b}$ be given by
\[f=\sum_{k\geq 0}a_k(\zeta_1)\zeta_2^{\frac{k}{q}} \quad \textrm{where}\quad a_k(\zeta_1)=\sum_{j\geq 0}b_{k,j}\zeta_1^{\frac{j}{q_k}}.\]
(Note that each $a_k(\zeta_1)$ has ``its own'' $q_k$.) Then
\[f=\sum_{k\geq 0}\left(\sum_{j\geq 0}b_{k,j}\zeta_1^{\frac{j}{m_k}}\right)\zeta_2^{\frac{k}{m}},\]
and $\lambda_{\zeta_1}(\lambda_{\zeta_2}(f))=b_{0,0}$, whereas the composition in the other order is not well--defined.
\end{remark}

\begin{definition}[The map ${{\Lim}_0}$]\label{def:L0}
Let $R$ be a real closed field and let  $S\subset R\la\zeta_1, \ldots, \zeta_m\ra^n$ be a semialgebraic set bounded over $R$. We denote by
\[
\Lim_0 (S):=\lambda_{\zeta_1}\cdots\lambda_{\zeta_m}(S).
\]
\end{definition}

\begin{remark}
The set ${{\Lim}_0}(S)\subset R^n$ is closed. In fact, if $S$ is defined by a formula $\psi$ with coefficients in $R\la\zeta\ra$, then it follows from  \cite[Prop.\ 12.43]{BasuPoRoyBook} that
\begin{align}\label{eq:L0explicit}
	{{\Lim}_0}(S)
	&=\overline{\left\{(x, t)\in R^{n+1}\,\bigg|\, \left(x\in S|_t\right)\land(t>0)\right\}}\cap R^{n}  \\
&=\overline{\mathrm{Reali}\bigg(\psi|_{\zeta=t}\land(t>0);\R\bigg)}\cap R^{n},
\end{align}
where we identify $R^{n}$ with $\{u=0\}$. A similar argument holds for $m>1$. 
Notice, however, that deducing a presentation for $\Lim_0(S)$ as in \cref{def:semialgebraic} is more complicated and requires quantifier elimination, which would bring us back to the problem mentioned in \cref{remark:exponential}.
\end{remark}

\subsubsection{Hausdorff limits}
Going back to \cref{example:neighbourhood}, assuming that $A\subset \R^n$ is bounded, then also the corresponding $S\subset \R\la \zeta\ra^n$ is bounded. Moreover, we notice that $\Lim_0(S)=A.$ Then, the set $\mathcal{U}_r(A)= S|_r$ converges in the Hausdorff metric to $A={{\Lim}_0}(S) $.
  To state the analogue result in general, we need to recall some more preliminary notions.
  
 Let $R$ be a real closed field. Given a semialgebraic set $S\subset R^n$, the distance from $S$ is the function defined by
\[\delta_S(x):=\mathrm{inf}_{s\in S}\|x-s\|, \quad x\in R^n.\]
This is a continuous, semialgebraic function, vanishing on the closure of $S$ and positive elsewhere, see \cite[Prop.\ 2.2.8]{BCR}.
Given $S\subset R^n$ and $r\in R$, the $r$--neighbourhood of $S$ in $R^n$ is the set defined by
\[\mathcal{U}_r(S, R^n):=\left\{x\in R^n\,\bigg|\, \delta_S(x)\leq r\right\}.\]
Since $\mathrm{dist}_S(\cdot)$ is semialgebraic, for every $r>0$ the set $\mathcal{U}_r(S, R^n)$ is also semialgebraic. 
\begin{definition}[Semialgebraic Hausdorff distance]\label{def:hausdorff}The \emph{Hausdorff distance} between two semialgebraic sets $S_1, S_2\subset R^n$ is defined as
\[\mathrm{dist}_{H}(S_1, S_2):=\inf\left\{\epsilon \in R\,\bigg|\, S_1\subseteq \mathcal{U}_\epsilon(S_2),\, S_2\subseteq \mathcal{U}_\epsilon(S_2)\right\}.\]
\end{definition}
Note that, if $R=\R$, this gives the usual Hausdorff distance. However, for general real closed fields $R$, this is not a ``distance'' in the sense of metric geometry, since the values of this function are elements of $R$. Still, given three closed semialgebraic sets $S_1, S_2, S_3,$ we have 
\[\label{eq:thh}\mathrm{dist}_H(S_1, S_3)\leq \mathrm{dist}_H(S_1, S_2)+\mathrm{dist}_H(S_2, S_3).\] This is proved exactly as in the classical case $R=\R$.

The next result outlines a useful property related to the map \eqref{eq:la}.

\begin{proposition}Let $S_1, S_2\subset R\la\zeta\ra_b^n$ be semialgebraic sets. Then
\[\label{eq:dHl}\mathrm{dist}_{H}(\lambda_{\zeta}(S_1), \lambda_{\zeta}(S_2))\leq \lambda_\zeta\left(\mathrm{dist}_H(S_1, S_2)\right).\]
\end{proposition}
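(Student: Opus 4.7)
The strategy is to establish the sharper, pointwise statement: for every $s_1 \in S_1$ there exists $s_2 \in S_2$ with $\|s_1 - s_2\| \leq \rho$, where $\rho := \mathrm{dist}_H(S_1, S_2)$, and symmetrically with the roles of $S_1, S_2$ interchanged. Granting this, since $\lambda_\zeta$ is a ring homomorphism on bounded elements that is order preserving, it commutes with square roots of non-negative elements (by uniqueness of the non-negative square root of a non-negative element), and hence with the Euclidean norm. Therefore $\|\lambda_\zeta(s_1) - \lambda_\zeta(s_2)\| = \lambda_\zeta(\|s_1 - s_2\|) \leq \lambda_\zeta(\rho)$, yielding the inclusions $\lambda_\zeta(S_i) \subseteq \mathcal{U}_{\lambda_\zeta(\rho)}(\lambda_\zeta(S_j))$ for $i\ne j$, from which \eqref{eq:dHl} follows by \cref{def:hausdorff}.

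To produce $s_2$ given $s_1$, I would pass to germs via \cref{rem:iso}, viewing $s_1$ and $\rho$ as germs of continuous semialgebraic functions $s_1(t)$, $\rho(t)$ defined on a small interval $(0,\delta)\subset R$. By \cref{remark:distinct}, $s_1(t)\in S_1|_t$ for all sufficiently small $t>0$, and the germ $\rho$ agrees with $t\mapsto \mathrm{dist}_H(S_1|_t, S_2|_t)$ (discussed in the next paragraph). Consequently $\delta_{S_2|_t}(s_1(t))\leq \rho(t)$ for each small $t$, and since the distance from a point to a closed bounded semialgebraic subset of $R^n$ is attained -- a first-order statement transferred from the classical case $R=\R$ -- the semialgebraic set
\[
T := \{(s_2,t)\in R^n\times R_+ \mid t>0,\; s_2\in S_2|_t,\; \|s_1(t)-s_2\|\leq \rho(t)\}
\]
has nonempty fibres over an interval $(0,\delta')\subset R_+$. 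Applying semialgebraic triviality (\cref{item2useful}) to the projection $T\to R$ above this interval and picking any point in a trivializing fibre produces a continuous semialgebraic section $t\mapsto s_2(t)$ whose germ defines the desired element $s_2\in R\la\zeta\ra^n$. By construction $\|s_1-s_2\|\leq \rho$ and, by \cref{remark:distinct}, $s_2\in S_2$; boundedness of $s_2$ over $R$ follows from $s_1\in R\la\zeta\ra_b^n$ and $\rho\in R\la\zeta\ra_b$.

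The main technical point I expect to require care is the identification of $\rho\in R\la\zeta\ra$, as a germ, with $\tilde\rho(t):=\mathrm{dist}_H(S_1|_t,S_2|_t)$. For every $\epsilon\in R\la\zeta\ra$ the inclusion $S_1\subseteq \mathcal{U}_\epsilon(S_2)$ is a first-order formula whose evaluation at small $t$ becomes $S_1|_t\subseteq \mathcal{U}_{\epsilon(t)}(S_2|_t)$. Since the defining inclusions for $\tilde\rho(t)$ hold at every small $t$, they transfer to $S_i\subseteq \mathcal{U}_{\tilde\rho}(S_j)$ in $R\la\zeta\ra$, giving $\rho\leq \tilde\rho$. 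Conversely, $S_i\subseteq \mathcal{U}_{\rho+\eta}(S_j)$ for every positive infinitesimal $\eta\in R\la\zeta\ra$, which evaluates to $\tilde\rho(t)\leq \rho(t)+\eta(t)$ for small $t$, and letting $\eta\to 0$ in $R\la\zeta\ra$ yields $\tilde\rho\leq \rho$. The same transfer argument ensures that the defining inclusions in \cref{def:hausdorff} actually hold at $\rho$ itself when $S_1,S_2$ are closed, which is what allows the set $T$ in the previous step to have the claimed nonempty fibres.
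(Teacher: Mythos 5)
Your proof takes a genuinely different and substantially heavier route than the paper's, and in doing so it introduces gaps that the shorter argument avoids.

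You correctly identify the first ingredient, which is shared with the paper: $\lambda_\zeta$ is an order-preserving ring homomorphism, hence commutes with the Euclidean norm, giving $\|\lambda_\zeta(x)-\lambda_\zeta(y)\|=\lambda_\zeta(\|x-y\|)$. Where the paths diverge is in producing, for $s_1\in S_1$, a nearby $s_2\in S_2$. The paper treats this as a near-definitional consequence of $\mathrm{dist}_H(S_1,S_2)\leq\epsilon$, applies $\lambda_\zeta$, and is done in a few lines. You instead unwind $\rho:=\mathrm{dist}_H(S_1,S_2)$ into germs, claim that $\rho$ coincides with the germ $t\mapsto \mathrm{dist}_H(S_1|_t,S_2|_t)$, build a fibred semialgebraic set $T$, and extract a continuous section of $T\to R_+$ via semialgebraic triviality. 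That is a lot of machinery for this statement, and two of the steps do not actually go through under the stated hypotheses.

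First, the construction of $T$ requires the fibres over small $t$ to be nonempty, which means the infimum $\inf_{s\in S_2|_t}\|s_1(t)-s\|\leq\rho(t)$ must be attained. You invoke attainment ``for a closed bounded semialgebraic subset,'' but the proposition does not assume $S_1,S_2$ closed, and a non-closed $S_2$ can fail to contain a point at distance exactly $\rho$ from $s_1$ (take $S_1=\{0\}\cup(1,2)$, $S_2=(1,2)\subset R\la\zeta\ra$: here $\rho=1$ but no $s_2\in S_2$ satisfies $\|0-s_2\|\leq 1$, even though the proposition's conclusion still holds because $\lambda_\zeta$ erases the infinitesimal slack). The paper's formulation sidesteps this by working with any $\epsilon$ satisfying $\mathrm{dist}_H(S_1,S_2)\leq\epsilon$ and exploiting that $\lambda_\zeta(\rho+\eta)=\lambda_\zeta(\rho)$ for infinitesimal $\eta$; insisting on the exact $\rho$, as you do, is precisely what creates the failure. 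Second, your identification of $\rho$ with the germ of the evaluated Hausdorff distances is not a free observation: it is a compatibility statement of the same flavor and comparable depth as \cref{propo:BL}, and the sketched transfer argument (in particular passing to the limit $\eta\to 0$ uniformly in $t$) glosses over the fact that the interval of valid $t$ shrinks with $\eta$. In short, you have replaced a short pointwise argument with a detour whose supporting lemmas are at least as hard as the proposition itself and whose hypotheses are stronger than what is given.
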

\begin{proof}We first prove the following fact. If $x, y\in R\la\zeta\ra_b^n$, then
\[\label{eq:norml}\|\lambda_{\zeta}(x)-\lambda_\zeta(y)\|= \lambda_\zeta\left(\|x-y\|\right).\]
In fact, since $\lambda_\zeta: R\la\zeta\ra_b^n\to R^n$ is a ring homomorphism, writing $u=\sum_{k\geq 0}a_k\zeta^{\frac{k}{q}}$, with $a_k\in R^n$, it holds\[\|\lambda_\zeta(u)\|= \|a_0\|=\lambda_\zeta(\|u\|).\] 

Let us now prove \eqref{eq:dHl}. Let $\epsilon \in R\la \zeta\ra$ such that $\mathrm{dist}_H(S_1, S_2)\leq \epsilon$. This is equivalent to the following statement: for every $a_1\in S_1$ there exists $a_2(a_1)\in S_2$ such that $\|a_1-a_2(a_1)\|\leq \epsilon$ and for every $b_2\in S_2$ there exists $b_1(b_2)\in S_1$ such that $\|b_1(b_2)-b_2\|\leq \epsilon.$

Then, for every $r_1=\lambda_\zeta(a_1)\in \lambda_\zeta(S_1)$  the element $r_2:=\lambda_\zeta(a_2(a_1))\in \lambda_\zeta(S_2)$ is such that, using \eqref{eq:norml},
\[\|r_1-r_2\|=\|\lambda_\zeta(a_1)-\lambda_\zeta(a_2(a_1))\|=\lambda_\zeta(\|a_1-a_2\|)\leq \lambda_\zeta(\epsilon).\]
Similarly, for every $s_2\in \lambda_\zeta(S_2)$ there exists and element $s_1\in \lambda_\zeta(S_1)$ such that
\[\|s_1-s_2\|\leq \lambda_\zeta(\epsilon).\]
This means that $\mathrm{dist}_H(\lambda_\zeta(S_1), \lambda_\zeta(S_2))\leq \lambda_\zeta(\epsilon)$.
 \end{proof}

Recall now the following result from  \cite{BasuLerario}.
\begin{proposition}[{\cite[Prop.\ 2.7]{BasuLerario}}]\label{propo:BL}
Let $S=\mathrm{Reali}(\psi; \R\la\zeta\ra)\subset \R\la\zeta\ra^n$ be a bounded semialgebraic set. Then, in the usual Hausdorff metric,
\[
\lim_{r\to 0} S|_r={{\Lim}_0}(S).
\] 
\end{proposition}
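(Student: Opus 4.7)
The plan is to verify both Hausdorff inclusions. Fix $M\in\R$ with $\|x\|\leq M$ for all $x\in S$; then $\Lim_0(S)\subseteq B_{\R^n}(M)$ is compact, and by adding $\|x\|^2\leq M^2$ to the defining formula we may assume $S|_r\subseteq B_{\R^n}(M)$ for every small $r>0$. Introduce the semialgebraic functions
\[
f(r):=\sup_{x\in S|_r}\delta_{\Lim_0(S)}(x),\qquad g(r):=\sup_{y\in\Lim_0(S)}\delta_{S|_r}(y),
\]
defined on an interval $(0,\delta_0)$, so that $\mathrm{dist}_H(S|_r,\Lim_0(S))=\max(f(r),g(r))$. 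Both are bounded by $2M$ and semialgebraic, hence by the monotonicity theorem for semialgebraic functions of one variable they admit limits as $r\to 0^+$; it suffices to show these limits vanish.

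Assume toward a contradiction that $\lim_{r\to 0^+}f(r)=c>0$. Then the semialgebraic set
\[
A:=\bigl\{(x,r)\in\R^n\times(0,\delta_0)\mid x\in S|_r,\; \delta_{\Lim_0(S)}(x)\geq c/2\bigr\}
\]
projects onto some interval $(0,\delta_1)$. Semialgebraic triviality (\cref{item2useful}) applied to this projection yields a continuous semialgebraic section $r\mapsto x(r)$ of $A$. Its germ at $0$ defines, via \cref{rem:iso}, an element $x\in \R\la\zeta\ra_b^n$ lying in $S$ by the characterization \eqref{property}, and hence $\lambda_\zeta(x)\in\Lim_0(S)$. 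Yet continuity of the distance function together with $x(r)\to\lambda_\zeta(x)$ forces $\delta_{\Lim_0(S)}(\lambda_\zeta(x))\geq c/2$, contradicting $\lambda_\zeta(x)\in\Lim_0(S)$.

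For $g$, the analogous argument produces a continuous semialgebraic section $r\mapsto y(r)\in\Lim_0(S)$ with $\delta_{S|_r}(y(r))\geq c/2$; compactness of $\Lim_0(S)$ yields a limit $y_0:=\lim_{r\to 0^+}y(r)\in\Lim_0(S)$. Since $\Lim_0(S)=\lambda_\zeta(S)$, pick $u\in S$ with $\lambda_\zeta(u)=y_0$; its germ is a continuous semialgebraic curve $r\mapsto u(r)\in S|_r$ with $u(r)\to y_0$ (again by \eqref{property} and \cref{rem:iso}). Then
\[
\delta_{S|_r}(y(r))\leq\|y(r)-u(r)\|\leq\|y(r)-y_0\|+\|y_0-u(r)\|\xrightarrow[r\to 0^+]{}0,
\]
contradicting $\delta_{S|_r}(y(r))\geq c/2$. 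The main technical point in both halves is the double translation between a semialgebraic section $r\mapsto x(r)$ over the $r$-axis produced by triviality and its germ counterpart in $\R\la\zeta\ra^n$; this is exactly the mechanism that already underlies \cref{lemma:cardinalitaFormula-one}, so no new tools beyond those developed in \cref{sec:semialg,sec:useful} are required.
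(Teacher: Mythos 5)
The paper cites this result from \cite[Prop.\ 2.7]{BasuLerario} without reproving it, so there is no in-paper proof to compare against. Your proposal is a self-contained argument, and as far as I can check it is correct. Its structure is in the spirit of the paper's own toolkit: reduce to one-variable semialgebraic functions of the parameter $r$, use the monotonicity theorem to get a limit, and then convert a would-be bad sequence into a germ (i.e.\ a point of $\R\la\zeta\ra^n$) via semialgebraic triviality and \eqref{property}, exactly the mechanism driving \cref{lemma:cardinalitaFormula-one}.

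A few minor points worth tightening. First, ``bounded'' in the statement must be read as bounded over $\R$ (so that $M$ can be taken in $\R$); otherwise $\Lim_0(S)$ is not even defined, cf.\ \cref{def:boundedone,def:L0}. You use this implicitly; it is worth saying explicitly. Second, one should dispose of the degenerate case $S=\emptyset$ (then $S|_r=\emptyset$ for small $r$ and $\Lim_0(S)=\emptyset$, so the claim is vacuous); for $S\neq\emptyset$, every $s\in S$ gives $s(r)\in S|_r$ for small $r$, so $S|_r\neq\emptyset$ and the functions $f,g$ are finite-valued. Third, when asserting that $A$ projects onto a full interval $(0,\delta_1)$, the argument is that $\lim_{r\to0^+}f(r)=c$ forces $f(r)>\tfrac{3c}{4}$ for $r$ small, hence the $\sup$ is strictly larger than $c/2$, and so there is a point of $S|_r$ at distance $\geq c/2$; it is cleaner to define $A$ with the threshold strictly below $c$, say $c/2$, as you do, and note that surjectivity of the projection for $r$ small is immediate. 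Fourth, in the $g$-direction the convergence $y(r)\to y_0$ follows again from the monotonicity theorem applied coordinatewise (boundedness of $\Lim_0(S)$), and $y_0\in\Lim_0(S)$ uses that $\Lim_0(S)$ is closed (the remark following \cref{def:L0}). None of these are gaps, only omitted justifications.
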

\begin{remark} The previous result is stated in \cite[Prop.\ 2.7]{BasuLerario} under the assumption that the formula $\psi$ has coefficients in $\R[\zeta]$. In fact the proof in the general case goes exactly in the same way, provided that $r$ is sufficiently small so that the evaluation is well--defined as explained in \cref{def:Lt}.
\end{remark}

\begin{remark}
Even if $S|_0$ is well--defined, in general, this may be far, in the Hausdorff metric, from $\Lim_0(S)$. For example let $\alpha=\left(x^2-\zeta^2<0\right)$  and $A=\mathrm{Reali}(\alpha; R\la\zeta\ra)$. Then $A|_0=\emptyset$ and $\Lim_0(A)=\{0\},$ so that $A|_0\subsetneq \Lim_0(A)$. On the other hand, if $\beta=(\zeta x=0)$, and $B=\mathrm{Reali}(\beta; R\la\zeta\ra)$,  then $B|_0=R$ and $\Lim_0(B)=\{0\}$, so that $B|_0\supsetneq \Lim_0(B).$
\end{remark}
We extend \cref{propo:BL} to any real closed field using the transfer principle.

\begin{proposition}\label{propo:baseind}
Let $F$ be a real closed extension of $\R$. Let $S=\mathrm{Reali}(\psi; F\la\zeta\ra)\subset F\la\zeta\ra^n$ be a bounded semialgebraic set. Then,
\[ 
\forall \epsilon>0 \quad \exists \delta>0\quad \forall 0< t<\delta\quad \mathrm{dist}_H(\mathrm{Reali}(\psi|_{\zeta = t}; F), \lambda_\zeta(S))<\epsilon.
\]
(Here all the variables $\epsilon, \delta,t$ are in $F$, and $\mathrm{dist}_H$ denotes the semialgebraic Hausdorff distance.)
\end{proposition}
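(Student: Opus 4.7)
The plan is to apply the transfer principle \cite[Prop.\ 5.2.3]{BCR} to reduce the statement for $F$ to the already established case $F=\R$ of \cref{propo:BL}. The key technical task is to recast both hypothesis and conclusion as a single first-order sentence over $F$, since the coefficients of $\psi$ live a priori in $F\la\zeta\ra$ rather than in $F$.

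First, I would eliminate the Puiseux coefficients of $\psi$ in favor of an auxiliary real variable $T$. By \cref{rem:iso}, each coefficient $c_i\in F\la\zeta\ra$ of $\psi$ is identified with the germ of a continuous semialgebraic function $f_i:(0,\delta_i)\to F$, whose graph is a semialgebraic subset of $F^2$ defined by a formula with coefficients in $F$. Replacing each occurrence of $c_i$ in $\psi$ by a fresh variable $y_i$, adjoining the constraint ``$y_i=f_i(T)$'', and existentially quantifying the $y_i$, I obtain a first-order formula $\tilde\psi(x,T)$ over $F$ with some tuple of parameters $\mathbf{c}\in F^M$. By construction, for every sufficiently small $t\in F_+$, the slice $\tilde{S}_t:=\{x\in F^n:\tilde\psi(x,t)\}$ coincides with $\mathrm{Reali}(\psi|_{\zeta=t};F)$; and by \eqref{eq:L0explicit}, $\lambda_\zeta(S)=\overline{\tilde{S}^+}\cap\{T=0\}$, where $\tilde{S}^+:=\{(x,T):\tilde\psi(x,T)\land T>0\}$. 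Both sides of the desired inequality are thereby definable over $F$ with parameters $\mathbf{c}$.

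After fixing the combinatorial shape of $\tilde\psi$ (number of atoms, degrees, signs), the full claim takes the form of a first-order sentence $\Phi(\mathbf{c})$ whose quantified variables $\epsilon,\delta,t,r$ all range over $F$: namely, \emph{if} $\tilde{S}_t$ is contained in $B_{F^n}(r)$ for every $t\in(0,r)$ and some $r>0$, \emph{then} for every $\epsilon>0$ there exists $\delta>0$ such that $\mathrm{dist}_H(\tilde{S}_t,\overline{\tilde{S}^+}\cap\{T=0\})<\epsilon$ for all $t\in(0,\delta)$. For every $\mathbf{c}_0\in\R^M$, the substitution $T=\zeta$ in $\tilde\psi_{\mathbf{c}_0}$ defines a semialgebraic subset of $\R\la\zeta\ra^n$ whose evaluations at $t$ recover $\tilde{S}_{\mathbf{c}_0,t}$, so \cref{propo:BL} applied to this set yields $\Phi(\mathbf{c}_0)$. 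The transfer principle then promotes this to $\Phi(\mathbf{c})$ for every $\mathbf{c}\in F^M$, and specializing to the parameters produced in step one gives the desired conclusion. The main obstacle is the packaging performed in step one: one has to verify that the germs $f_i$ can be encoded by first-order formulas over $F$ in a way that is compatible with the ``sufficiently small'' evaluations of \cref{def:Lt} and \cref{def:suffsmall}, so that the auxiliary constraints $y_i=f_i(T)$ genuinely represent the Puiseux coefficients $c_i$ near $\zeta=0$.
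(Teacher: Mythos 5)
Your proposal is correct and follows the same strategy as the paper: transfer \cref{propo:BL} (the case $F=\R$) to a general real closed extension via \cite[Prop.\ 5.2.3]{BCR}. The paper's own proof is considerably terser and simply declares that the rephrased conclusion of \cref{propo:BL} ``is a first-order formula with coefficients in $\R$'' before invoking transfer; you correctly identify and fill in the technical step this glosses over, namely that the coefficients of $\psi$ lie in $F\la\zeta\ra$ rather than $F$, so they must first be encoded via their germ representations (\cref{rem:iso}) as first-order formulas with parameters in $F^M$, replacing $\zeta$ by an auxiliary real variable $T$, so that what is actually transferred is an honest parametrized sentence $\Phi(\mathbf{c})$ over $\R$.
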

\begin{proof}Given a semialgebraic set $S$ as in the statement of \cref{propo:BL}, we can rephrase the content of its conclusion by saying that, 
\[ 
\forall \epsilon>0 \quad \exists \delta>0\quad \forall 0<t<\delta\quad \mathrm{dist}_H(\mathrm{Reali}(\psi|_{\zeta=t};\R), \lambda_\zeta(S))<\epsilon.
\]
All variables in this formula are in $\R$ and $\mathrm{dist}_H$ is the usual Hausdorff distance, which is written as a first order formula with coefficients in $\R$. Therefore, the same conclusion holds if $\R$ is replaced by a real closed extension $F$ of it, by  \cite[Prop.\ 5.2.3]{BCR}.
\end{proof}

We now generalize \cref{propo:baseind} to the case of multiple infinitesimals.
\begin{theorem}\label{propo:zeta}
Let $R$ be a real closed extension of $\R$. Let $S=\mathrm{Reali}(\psi;R\langle\zeta_1, \ldots, \zeta_m\rangle)\subset R\langle\zeta_1, \ldots, \zeta_m\rangle^n$ be a semialgebraic set, bounded over $R$.  Then, for every $1\leq k\leq m$ it holds\begin{equation}
	\label{eq:limitorder}\textrm{``}\lim_{t_k\to 0}\cdots\lim_{t_m\to 0}S|_{t_k, \ldots, t_m}=\lambda_{\zeta_k}\cdots \lambda_{\zeta_m}(S)\textrm{''},
	\end{equation}
	where \eqref{eq:limitorder} means: 
	for all $\epsilon\in R\la \zeta_1, \ldots, \zeta_{k-1}\rangle$, $\epsilon>0$, for $0<t_k\ll\cdots \ll t_m\ll 1$ it holds
	\[
	\mathrm{dist}_H\Big(S|_{t_k, \ldots, t_m}, \lambda_{\zeta_k}\cdots \lambda_{\zeta_m}(S)\Big)< \epsilon,
	\]
			where $\mathrm{dist}_H$ is the semialgebraic Hausdorff distance (see \cref{def:hausdorff}).
	\end{theorem}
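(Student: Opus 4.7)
The plan is to argue by downward induction on $k$, using \cref{propo:baseind} as the base case $k=m$ and as the ``one-step'' tool in the inductive step. The key auxiliary ingredients will be (a) the triangle inequality \eqref{eq:thh} for the semialgebraic Hausdorff distance, and (b) the non-expansiveness property \eqref{eq:dHl} of the limit homomorphism $\lambda_\zeta$.

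For the base case $k=m$, I view $R\la\zeta_1,\ldots,\zeta_{m-1}\ra$ as a real closed extension $F$ of $\R$, so that $S\subset F\la\zeta_m\ra^n$ is a bounded semialgebraic set over $F$. Then \cref{propo:baseind} applied over $F$ directly gives: for every $\epsilon\in F$ with $\epsilon>0$ there exists $\delta_m\in F$, $\delta_m>0$, such that for all $0<t_m<\delta_m$ one has $\mathrm{dist}_H(S|_{t_m},\lambda_{\zeta_m}(S))<\epsilon$, which is exactly the statement for $k=m$.

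For the inductive step, suppose the statement holds for $k+1$, and fix $\epsilon\in R\la\zeta_1,\ldots,\zeta_{k-1}\ra$ with $\epsilon>0$. Apply the inductive hypothesis with tolerance $\epsilon/2$ (viewed inside $R\la\zeta_1,\ldots,\zeta_k\ra$): this yields some $\delta_{k+1},\ldots,\delta_m$ in the sense of \cref{def:suffsmall} so that for $0<t_{k+1}\ll\cdots\ll t_m\ll 1$ the set $T:=S|_{t_{k+1},\ldots,t_m}\subset R\la\zeta_1,\ldots,\zeta_k\ra^n$ satisfies
\[
\mathrm{dist}_H\bigl(T,\lambda_{\zeta_{k+1}}\cdots\lambda_{\zeta_m}(S)\bigr)<\epsilon/2.
\]
Now, for each such fixed tuple $(t_{k+1},\ldots,t_m)$, I apply \cref{propo:baseind} once more, this time to the bounded semialgebraic set $T$ over the real closed extension $F=R\la\zeta_1,\ldots,\zeta_{k-1}\ra$ of $\R$ (viewing $T\subset F\la\zeta_k\ra^n$). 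This produces $\delta_k=\delta_k(t_{k+1},\ldots,t_m)\in F$, $\delta_k>0$, such that for all $0<t_k<\delta_k$,
\[
\mathrm{dist}_H\bigl(T|_{t_k},\lambda_{\zeta_k}(T)\bigr)<\epsilon/2.
\]
Since $T|_{t_k}=S|_{t_k,t_{k+1},\ldots,t_m}$ by the iterative definition of evaluation, the triangle inequality and the non-expansive property \eqref{eq:dHl} give
\[
\mathrm{dist}_H\bigl(S|_{t_k,\ldots,t_m},\lambda_{\zeta_k}\cdots\lambda_{\zeta_m}(S)\bigr)\leq \tfrac{\epsilon}{2}+\lambda_{\zeta_k}\!\bigl(\mathrm{dist}_H(T,\lambda_{\zeta_{k+1}}\cdots\lambda_{\zeta_m}(S))\bigr)\leq\tfrac{\epsilon}{2}+\lambda_{\zeta_k}(\epsilon/2)=\epsilon,
\]
where in the last equality I used that $\epsilon\in R\la\zeta_1,\ldots,\zeta_{k-1}\ra$ is constant in $\zeta_k$, so $\lambda_{\zeta_k}(\epsilon/2)=\epsilon/2$. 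This closes the induction.

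The main technical obstacle I expect is bookkeeping the nested ``sufficiently small'' quantifiers of \cref{def:suffsmall}: the threshold $\delta_k$ produced by \cref{propo:baseind} in the inductive step genuinely depends on $(t_{k+1},\ldots,t_m)$, and one must check that this dependence fits the iterated structure of $0<t_k\ll\cdots\ll t_m\ll 1$ (it does, since $\delta_k$ is chosen \emph{after} $t_{k+1},\ldots,t_m$ are fixed). A secondary point requiring care is that each intermediate set, such as $T=S|_{t_{k+1},\ldots,t_m}$ and $\lambda_{\zeta_{k+1}}\cdots\lambda_{\zeta_m}(S)$, remains bounded over the relevant coefficient field so that both \cref{propo:baseind} and the non-expansiveness \eqref{eq:dHl} of $\lambda_{\zeta_k}$ actually apply; this is inherited from the boundedness of $S$ over $R$ since evaluation at positive elements and application of $\lambda_{\zeta_j}$ preserve the polynomial inequality cutting out the bounding ball.
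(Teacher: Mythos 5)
Your proof is correct and takes essentially the same approach as the paper: downward induction on $k$ with \cref{propo:baseind} as base case and one-step tool, combined with the triangle inequality \eqref{eq:thh} and the non-expansiveness \eqref{eq:dHl} of $\lambda_{\zeta_k}$, using that $\epsilon\in R\la\zeta_1,\ldots,\zeta_{k-1}\ra$ is fixed by $\lambda_{\zeta_k}$. The only cosmetic difference is notational (you index the inductive step by $k\mapsto k+1$ where the paper uses $j-1\mapsto j$ and names the intermediate sets $A$, $B$), and your closing remark on bookkeeping the nested quantifiers and the boundedness of intermediate evaluations matches what the paper handles implicitly.
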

	\begin{proof}When $k=m$, the statement is given by \cref{propo:baseind} applied to the case $F=R\la \zeta_1, \ldots, \zeta_{m-1}\ra$, so that $F\la\zeta\ra\simeq R\la \zeta_1, \ldots, \zeta_{m-1}, \zeta\ra$, where $\zeta=\zeta_{m}$.
	
		Assume that the statement is true for $k=j\leq m$. We prove it for $k=j-1.$
		By our working assumption, for every $\epsilon\in R\la \zeta_1, \ldots, \zeta_{j-1}\ra$, $\epsilon>0$, for  $0<t_j\ll t_{j+1}\ll \dots \ll t_m \ll 1$ the following property holds:
	\[\label{eq:dh2} 
	\mathrm{dist}_{H}\left(S|_{t_j, \ldots, t_m}, \lambda_{\zeta_j}\cdots \lambda_{\zeta_m}(S)\right)< \frac{\epsilon}{2}.
	\]
Recall that here $t_i \in R\la\zeta_1, \ldots, \zeta_{i-1}\ra$ for every $j\leq i\leq  m$. A fortiori we can take $\epsilon\in R\la \zeta_1, \ldots, \zeta_{j-2}\ra$.

	Denote by $A:=S|_{t_j, \ldots, t_m}\subset R\la\zeta_1, \ldots, \zeta_{j-1}\ra^n$ and by $B:=\lambda_{\zeta_j}\cdots \lambda_{\zeta_m}(S)\subset R\la\zeta_1, \ldots, \zeta_{j-1}\ra^n$, which are both bounded over $R$. Given $\epsilon\in R\la \zeta_1, \ldots, \zeta_{j-2}\ra$, $\epsilon>0$ let $\delta_{j-1}\in R\la \zeta_1, \ldots, \zeta_{j-2}\ra$, $\delta_{j-1}>0$ be given by \cref{propo:baseind} such that for all $t_{j-1} \in R\la \zeta_1,\dots,\zeta_{j-2}\ra$ with $0<t_{j-1}<\delta_{j-1}$ the following property holds:
	\[\label{eq:S1}
	\mathrm{dist}_{H}\left(A|_{t_{j-1}}, \lambda_{\zeta_{j-1}}(A)\right)< \frac{\epsilon}{2}.
	\]
Then, for $0<t_{j-1}\ll t_j \ll \dots \ll t_m \ll 1$ it holds
	\begin{align}
	\mathrm{dist}_H(S|_{t_{j-1}, \ldots, t_m}, \lambda_{\zeta_{j-1}}\cdots \lambda_{\zeta_m}(S))&\stackrel{\eqref{eq:thh}}{\leq} \mathrm{dist}_H(A|_{t_{j-1}}, \lambda_{\zeta_{j-1}}(A))+\mathrm{dist}_H(\lambda_{\zeta_{j-1}}(A), \lambda_{\zeta_{j-1}}(B)),\\
	&\stackrel{\eqref{eq:S1}}{<} \frac{\epsilon}{2}+\lambda_{\zeta_{j-1}}\left(\mathrm{dist}_H(A,B)\right),\\
	&\stackrel{\eqref{eq:dHl}}{\leq}\frac{\epsilon}{2}+\lambda_{\zeta_{j-1}}\left(\frac{\epsilon}{2}\right)= \epsilon,
	\end{align}
	where in the last inequality we used the fact that $\epsilon$ was chosen in $R\la\zeta_1, \ldots, \zeta_{j-2}\ra$.
		\end{proof}

	\subsection{Hausdorff approximations of closed and bounded sets}\label{sec:hacb}
	Recall that every closed (in the Euclidean topology) semialgebraic set $S\subseteq R^n$ can be written as a finite union of closed basic semialgebraic sets (\cite[Thm.\ 2.7.2]{BCR}):
 \begin{equation}\label{Semialgebrici_Def}
S=\bigcup_{i=1}^{a}\bigcap_{j=1}^{b_i}\{x\in R^n \mid p_{ij}(x)\leq 0\}.
\end{equation}
In general, for a closed semialgebraic set $S$ with diagram $D(S)=(n, c, d)$, passing from a representation of the form \eqref{eq:defA} to one of the form \eqref{Semialgebrici_Def}, we cannot control the number of unions and intersections in \eqref{Semialgebrici_Def} as a function of $c$, nor the degrees of the polynomials in \eqref{Semialgebrici_Def} as a function of $d$. (On the other hand, in passing from \eqref{Semialgebrici_Def} to \eqref{eq:defA} the process is controlled.) 

However, here we use the tools from the previous section to show that, given a closed and bounded semialgebraic set with a representation as in \eqref{eq:defA}, we can \emph{approximate} it with a closed and bounded semialgebraic set described as in \eqref{Semialgebrici_Def} in a controlled way (\cref{propo:approxclosed}).

We start with the following elementary lemma.
\begin{lemma}\label{closurebasicset}
Let
$C \subset \R^n $ be of the form
\begin{equation}
C=\left(\bigcap_{j\in J^=}\left\{x\in \R^n\,\bigg|\,p_{j}(x)=0\right\}\right)\cap 
	\left( \bigcap_{j\in J^{<}}\left\{ x\in \R^n\,\bigg|\,q_{j}(x)<0\right\}\right),
\end{equation}
where the $p_j$ and the $q_j$ are polynomials and $J^=, J^<$ are finite sets. 
Then, denoting by $(x, u)$ points in $\R^n \times \R$, and identifying $\{u=0\}$ with $\R^n$, the closure of $C$ can be described as:
\begin{equation}\label{eq:closure}
\overline{C} 
=
\overline{\left\{(x, u) \,
	\bigg| \bigg(
	p_j (x) = 0,\, \forall j\in J^=\bigg)\land 
	\bigg(q_j (x) + u \leq 0, \, \forall j\in J^<\bigg)\land\bigg( u>0\bigg) \right \} }
\cap 
\{u=0\}.
\end{equation}
\end{lemma}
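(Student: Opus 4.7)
The plan is to establish the set equality by proving both inclusions, using only elementary topology together with continuity of the polynomials $p_j, q_j$. Let me write $D := \{(x,u)\in \R^{n+1} \mid p_j(x)=0 \ \forall j\in J^=,\ q_j(x)+u\leq 0\ \forall j\in J^<,\ u>0\}$, so the claim is that $\overline{C}=\overline{D}\cap\{u=0\}$.

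For the inclusion $\overline{C}\subseteq \overline{D}\cap\{u=0\}$, I would start from a point $x\in \overline{C}$ and pick a sequence $x_k\in C$ converging to $x$. For each $x_k$ we have $p_j(x_k)=0$ for every $j\in J^=$ and $q_j(x_k)<0$ for every $j\in J^<$. Since $J^<$ is finite, there exists $u_k>0$ with $u_k\leq -\max_{j\in J^<}q_j(x_k)$; I would moreover force $u_k\to 0$ (e.g.\ by additionally requiring $u_k\leq 1/k$). Then $(x_k,u_k)\in D$ by construction and $(x_k,u_k)\to (x,0)$, showing $(x,0)\in \overline{D}$.

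For the reverse inclusion $\overline{D}\cap\{u=0\}\subseteq \overline{C}$, I would take $(x,0)\in \overline{D}$ and pick $(x_k,u_k)\in D$ with $(x_k,u_k)\to(x,0)$. The key observation is that any point $(y,v)\in D$ with $v>0$ satisfies $q_j(y)\leq -v<0$ for all $j\in J^<$ and $p_j(y)=0$ for all $j\in J^=$, so $y\in C$. Thus each $x_k\in C$, and since $x_k\to x$ we conclude $x\in \overline{C}$.

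The argument is essentially routine, so there is no real obstacle; the only minor subtlety is that on the right-hand side of \eqref{eq:closure} one takes the closure \emph{in $\R^{n+1}$} of a set lying in the open half-space $\{u>0\}$ before intersecting with the hyperplane $\{u=0\}$. This subtlety is precisely what allows the strict inequalities $q_j<0$ defining $C$ to be ``traded'' for the non-strict inequalities $q_j+u\leq 0$ in the description of $D$, with the infinitesimal slack $u$ playing the role of a parameter that one later lets tend to zero, which is the feature that makes the lemma useful in combination with the evaluation-at-Puiseux-series machinery from \cref{sec:Hsa}.
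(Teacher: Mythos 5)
Your proof is correct and takes essentially the same approach as the paper: both inclusions are established by elementary sequential arguments, with the auxiliary variable $u$ providing a positive slack that turns the strict inequalities $q_j<0$ into the non-strict $q_j+u\leq 0$ before letting $u\to 0$. The only cosmetic difference is that for the inclusion $\overline{C}\subseteq\overline{D}\cap\{u=0\}$ the paper starts directly from a point $z\in C$ and uses the constant sequence $z_k=z$ with $u_k=U/k$ (then appeals to monotonicity of the closure operation), whereas you approximate a general point of $\overline{C}$ by a sequence in $C$ first; both variants are equally valid.
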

\begin{proof}
	We prove the two inclusions separately.
	To prove the inclusion of the set on the left of \eqref{eq:closure} into the one on the right, by the monotonicity of the closure operation, it is enough to show that 
	for any $z \in C$ there exists a sequence
	$\{ (z_k,u_k)\}_{k\geq1} $ converging to $ (z,0)$ satisfying for every $k\geq1$
	\begin{equation}
	\bigg(
	p_j (z_k) = 0,\, \forall j\in J^=\bigg)\land 
	\bigg(q_j (z_k) + u_k \leq 0, \, 
	\forall j\in J^<\bigg)\land\bigg( u_k>0\bigg).
	\end{equation}
	Since $z \in C$, we have
		\begin{equation}
		\bigg(
		p_j (z) = 0,\, \forall j\in J^=\bigg)
		\land 
		\bigg(q_j (z) + U \leq 0, \, 
		\forall j\in J^<\bigg),
	\end{equation}
	with 
	$U:= - \max \{q_j (z) \,|\, j\in J^< \} > 0$.
	Hence, setting 
	$u_k := \frac{U}{k} >0$ and $z_k := z$,
	we observe that the sequence $(z_k , u_k)$ satisfies all the above conditions and converges to $(z,0)$. 
	
	To prove the other inclusion, we take
	\begin{equation}
		( z , 0) \in
		\overline{\left\{(x, u)\in \R^n \times \R \,
			\bigg| \bigg(
			p_j (x) = 0,\, \forall j\in J^=\bigg)\land 
			\bigg(q_j (x) + u \leq 0, \, \forall j\in J^<\bigg)\land\bigg( u>0\bigg) \right \} }.
	\end{equation}
Hence, there exists a sequence $(z_k, u_k)$ converging to $(z,0)$ such that for any $k$
\begin{equation}
	\bigg(
	p_j (z_k) = 0,\, \forall j\in J^=\bigg)\land 
	\bigg(q_j (z_k) + u_k \leq 0, \, 
	\forall j\in J^<\bigg)\land\bigg( u_k>0\bigg).
\end{equation}
In particular $z_k \in C$ for all $k$, hence $z \in \overline{C}$.
\end{proof}

\begin{proposition}\label{propo:approxclosed}For every $\epsilon>0$ and for every closed and bounded semialgebraic set $S\subset \R^n$ with diagram $D(S)=(n, c, d)$, there exists a closed semialgebraic set $S'\subset \R^n$ satisfying
	\[\label{eq:e/2}\mathrm{dist}_{H}(S, S')\leq \epsilon\]
and such that
\[\label{eq:Seps}{S'}
=
\bigcup_{i=1}^a 
\mathrm{Bas}(\mathcal{P}_{i}, \mathcal{Q}_i;\R),\]
with the property that, for every $i=1, \ldots, a$ we have $\#(\mathcal{P}_i\cup \mathcal{Q}_i)\leq b$, with $ab\leq c$, and with each polynomial in $\mathcal{P}_i\cup \mathcal{Q}_i$ of degree bounded by $d$.
\end{proposition}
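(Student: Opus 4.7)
First, I fix a representation as in \eqref{eq:defA} realizing the diagram $D(S) = (n, c, d)$, so that $S = \bigcup_{i=1}^a C_i$ with
\[
C_i = \bigcap_{j\in J_i^{=}}\{x\in\R^n\mid p_{ij}(x)=0\}\cap \bigcap_{j\in J_i^{<}}\{x\in\R^n\mid q_{ij}(x) < 0\},
\]
where $\#J_i^{=}+\#J_i^{<}\leq b$, $ab\leq c$, and all polynomial degrees are $\leq d$ (after rewriting each sign condition $\sigma_{ij}=+1$ as $-p_{ij}<0$). Since $S$ is closed and bounded, $S = \overline{S} = \bigcup_{i=1}^a \overline{C_i}$, and I fix $M\in\R_+$ with $S\subseteq B_{\R^n}(0,M)$. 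The plan is to realize each $\overline{C_i}$ as the $\Lim_0$ of a closed basic semialgebraic set in $\R\la\zeta\ra^n$ and then evaluate at a sufficiently small real $r>0$.

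Concretely, for each $i$ I define the closed basic semialgebraic set
\[
\widetilde{C_i} := \bigcap_{j\in J_i^{=}}\{x\mid p_{ij}(x)=0\}\cap \bigcap_{j\in J_i^{<}}\{x\mid q_{ij}(x)+\zeta\leq 0\}\subset \R\la\zeta\ra^n.
\]
A key observation is that $\widetilde{C_i}$ is automatically bounded over $\R$: for every $r\in\R$ with $r>0$ one has $\widetilde{C_i}|_r \subseteq C_i \subseteq B_{\R^n}(0,M)$ (since $q_{ij}\leq -r$ forces $q_{ij}<0$), and viewing every $x\in \widetilde{C_i}$ as a tuple of germs of continuous semialgebraic functions via \cref{rem:iso}, the equivalence in \cref{remark:distinct} lifts this uniform bound to $\widetilde{C_i}\subseteq B_{\R\la\zeta\ra^n}(0,M)$. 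Combining \cref{closurebasicset} (with its auxiliary variable $u$ playing the role of $\zeta$) with the explicit description \eqref{eq:L0explicit} of $\Lim_0$, I then identify $\Lim_0(\widetilde{C_i})=\overline{C_i}$. Applying \cref{propo:baseind} with $F=\R$ yields $r_0>0$ such that
\[
\mathrm{dist}_H(\widetilde{C_i}|_r,\overline{C_i})\leq \epsilon
\]
for every $r\in(0,r_0)\cap \R$ and every $i=1,\ldots,a$.

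Fixing such an $r$ and setting $S':=\bigcup_{i=1}^a \widetilde{C_i}|_r$, the elementary union bound $\mathrm{dist}_H(\bigcup_i A_i,\bigcup_i B_i)\leq \max_i \mathrm{dist}_H(A_i,B_i)$, together with $S=\bigcup_i \overline{C_i}$, yields $\mathrm{dist}_H(S,S')\leq \epsilon$. Each piece reads $\widetilde{C_i}|_r = \mathrm{Bas}(\{p_{ij}\}_{j\in J_i^{=}},\{q_{ij}+r\}_{j\in J_i^{<}};\R)$: the cardinality bound $\#(\mathcal{P}_i\cup \mathcal{Q}_i)\leq b$ is preserved since $\#J_i^{=}+\#J_i^{<}\leq b$, and the degrees are preserved since $\deg(q_{ij}+r)=\deg(q_{ij})\leq d$, which gives the announced description of $S'$. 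The main technical obstacle is the identification $\Lim_0(\widetilde{C_i})=\overline{C_i}$, which requires bridging \cref{closurebasicset} (describing closure by embedding in $\R^n\times\R$ and collapsing the auxiliary variable to $0$) with the Puiseux-infinitesimal interpretation of $\Lim_0$ via \eqref{eq:L0explicit}, together with the automatic boundedness of $\widetilde{C_i}$ over $\R$ noted above, which is precisely what keeps the combinatorial count strictly under $c$ without adding extra constraints. Once these are in place, \cref{propo:baseind} supplies the Hausdorff convergence essentially for free.
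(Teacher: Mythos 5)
Your proposal is correct and follows essentially the same route as the paper's proof: perturb only the strict inequalities by adding the infinitesimal $\zeta$, obtain a closed basic set $\widetilde{C_i}\subset\R\la\zeta\ra^n$ whose $\Lim_0$ recovers $\overline{C_i}$ via \cref{closurebasicset} and \eqref{eq:L0explicit}, verify boundedness by the germ argument, and evaluate at a small real $r$ using Hausdorff convergence (the paper cites \cref{propo:BL}; you invoke \cref{propo:baseind} with $F=\R$, which is the same statement in that case). The rewriting of sign conditions, the degree and cardinality bookkeeping, and the final union bound on $\mathrm{dist}_H$ all match the paper's argument.
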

\begin{proof}Let us write
	\begin{equation}S=\bigcup_{i=1}^{a} \bigcap_{j=1}^{b_i}\left\{x\in \R^n\,\bigg|\,\mathrm{sign}(p_{ij}(x))=\sigma_{ij}\right\},
	\end{equation}
	with $c=a\max_i\{b_i\}$.
	Let us examine the sets
	\[C_i:=\bigcap_{j=1}^{b_i}\left\{x\in \R^n\,\bigg|\,\mathrm{sign}(p_{ij}(x))=\sigma_{ij}\right\}.\]
	By possibly relabelling the $p_{ij}$ and multiplying them by $\pm1$, we can write each $C_i$ as
	\[C_i=\left(\bigcap_{j\in J^=_{i}}\left\{x\in \R^n\,\bigg|\,p_{ij}(x)=0\right\}\right)\cap 
	\left( \bigcap_{j\in J_{i}^{<}}\left\{ x\in \R^n\,\bigg|\,p_{ij}(x)<0\right\}\right),\]
	where $\#\left(J_i^=\cup J_i^<\right)= b_i.$ For each $i=1, \ldots, a$ and for every $i\in J_{i}^<$ we define the polynomial $\widetilde{p}_{ij}\in \R\la\zeta\ra[x_1, \ldots, x_n]$ as follows:
	\[\widetilde{p}_{ij}(x):=p_{ij}(x)+\zeta.
	\]
	For any $i= 1, \dots, a$ we consider the semialgebraic set 
	$\widetilde{S}_i \subset \R\la\zeta\ra^n$ 
	defined by the formula:
	\[\widetilde{\psi}_i
	:=
	\left(\bigwedge_{j\in J_i^=}p_{ij}(x)=0\right)\land\left(\bigwedge_{j\in J_{i}^<}\widetilde{p}_{ij}(x)\leq0\right).
	\]
	
	We claim that, if $S\subseteq B_{\R^n}(\rho)$ for $\rho>0$, then $\widetilde{S}_i \subseteq B_{\R\la\zeta\ra^n}(\rho)$. In fact, let  $g:(0, \delta)\to \R$ be a representative for an element of $\widetilde{S}_i$. Then, for every $\zeta\in (0, \delta)$ we have $p_{ij}(g(\zeta))=0$ for $j\in J_i^=$ and $p_{ij}(g(\zeta))+\zeta\leq 0$ for $j\in J_i^{<}$. Since $\zeta<\delta$, the inequality $p_{ij}(g(\zeta))+\zeta\leq 0$ implies $p_{ij}(g(\zeta))<0$,  i.e. $g(\zeta)\in C_i\subseteq B_{\R^n}(\rho)$ for every $\zeta\in (0, \delta)$. Therefore $g\in B_{\R\la\zeta\ra^n}(\rho)$.
	
	In particular $\widetilde{S}_i$ is bounded and, by \cref{propo:BL}, for any $\epsilon >0 $ we get 
	$r >0$ such that
	\begin{equation}\label{proof:approxanddiagram1}
		\mathrm{dist}_{H}
		\left( { {\Lim}_0}(\widetilde{S}_i) , \widetilde{S}_i|_{r})
			\right)
		\leq \epsilon.
	\end{equation}
	We have by \eqref{eq:L0explicit} and \cref{closurebasicset}
	\begin{equation}
		{\Lim}_0 (\widetilde{S}_i)
		 = \overline{
			\left\{(x, u)\in \R^{n+1}\,
			\bigg|\, 
			\left(x\in \widetilde{S}_i|_{u}
			\right)
			\land(u>0) \right\}} \cap \R^{n}   
		 =  \overline{C_i}.
	\end{equation}
	Observe that $ \widetilde{S}_i|_{r} = \mathrm{Bas}(\mathcal{P}_{i}, \mathcal{Q}_i;\R)$, where
\begin{equation}
\mathcal{P}_i=\left\{p_{ij}\,\big|\, j\in {J_i^=}\right\}\quad \textrm{and}\quad \mathcal{Q}_i
=
\left\{
p_{ij}+r \,\big|\, j\in J_{i}^<
\right \}.
\end{equation}
Note that  $\#(\mathcal{P}_i\cup \mathcal{Q}_i)\leq b:=\max_i b_i$ so that $ab\leq c$, and each polynomial in $\mathcal{P}_i\cup \mathcal{Q}_i$ has degree bounded by $d$. We now define
\[S' : = \bigcup_{i=1}^a  C_i',\qquad \text{with} \qquad C_i':=\mathrm{Bas}(\mathcal{P}_{i}, \mathcal{Q}_i;\R).
\]

Since $S$ is closed, we have $S=\bigcup_{i=1}^a \overline{C_i}$. From \eqref{proof:approxanddiagram1} we get directly
\begin{equation}
\mathrm{dist}_{H}
( S , S')\leq \max_i \mathrm{dist}_H\left(\overline{C_i},C_i'\right)
\leq \epsilon,
\end{equation}
concluding the proof. 
\end{proof}

\section{Quantitative approximate definable choices}\label{sec:quantitativeselection}
\subsection{Preliminary constructions}\label{sec:construction}

Following a construction from \cite{BR}, given a closed basic semialgebraic set 
\[ 
S=\B(\mathcal{P}, \mathcal{Q};\R)\subset\R^{n}
\] 
we construct a closed basic semialgebraic set $\widetilde{S} \subset R^{n}$, where $R$ is a field of algebraic Puiseux series with coefficients in $\R$, such that $\Lim_0(\widetilde{S})=S$ and 
for which a definable choice over a given projection can be made quantitatively.

\begin{construction}\label{remark:construction}
The following construction is taken from \cite{BR}. Set
\begin{equation}
	R= \R\langle \zeta_1, \zeta_2, \zeta_3\rangle.
\end{equation}
For a given closed basic and bounded semialgebraic set 
\[ S=\B(\mathcal{P}, \mathcal{Q};\R)\subset\R^{n},\]  and for every $1\leq k\leq n$ the construction provides new semialgebraic sets 
\[ 
\widetilde{S}^k\subseteq \widetilde{S}\subset R^{n}.
\] 

Assume that the polynomials in $\mathcal{P}, \mathcal{Q}$ have degree bounded by $d$.  First, using a perturbation argument, one  constructs new families of polynomials $\widetilde{\mathcal{P}}, \widetilde{\mathcal{Q}}\subset R[x_1, \ldots, x_{n}]$  with degrees bounded by $2d+2$. The cardinality of $\widetilde{\mathcal{Q}}$ equals the one of $\mathcal{Q}$, whereas (for our purposes) the cardinality of $\widetilde{\mathcal{P}}$ can be assumed to be $1$  (by taking the sum of the squares of its elements).  In this way we get the set 
	\begin{equation}
	\widetilde{S}:=\B(\widetilde{\mathcal{P}}, \widetilde{\mathcal{Q}}; R).
	\end{equation}
	Then, one constructs $\widetilde{S}^k$. First, set 
	\[g(x):=1+\sum_{j=1}^{n}j x_j^{2d+2},\] and 
	for every family $\mathcal{F}=\{f_1,\ldots, f_s\}\subset R[x_1, \ldots, x_{n}]$   define the algebraic set:
	\begin{equation}\label{eq:critf}
	\mathrm{Crit}_k(\mathcal{F}; R):=\{x\in R^{n}\mid f_1(x)=\cdots=f_s(x)=0,\, \mathrm{rank}(\widetilde{J}(x))\leq k\},
	\end{equation}
	where $\widetilde{J}(x)$ is the matrix of size $(n-k)\times (s+1)$ whose columns are the partial derivatives of $f_1, \ldots, f_s, g$ with respect to $x_{k+1}, \ldots, x_{n}$. 
		The set $\widetilde{S}^k$ is then defined by
		\[\label{eq:Sl}\widetilde{S}^k:=\bigcup_{\widetilde{\mathcal{Q}}'\subset \widetilde{\mathcal{Q}}}\mathrm{Crit}_k(\widetilde{\mathcal{P}}\cup \widetilde{\mathcal{Q}}'; R)\cap \widetilde{S},\]
		where the union runs over all subsets $\widetilde{\mathcal{Q}}'\subset \widetilde{\mathcal{Q}}  \subset R[x_1,\dots,x_n]$.
	\end{construction}
	\begin{remark}[On boundedness]Even though this is not explicitly stated in \cite{BR} (but indeed used in their arguments), by inspecting the form of $\widetilde{\mathcal{P}}$ and $\widetilde{\mathcal{Q}}$ provided there, one can see that, if $S\subset \R^{n}$ is bounded, then  $\widetilde{S}^\ell\subseteq \widetilde{S}\subset R^{n}$ are bounded over $\R$.
	\end{remark}
	
	The next result gives a control on the diagram of $\widetilde{S}^k$.
	
\begin{lemma}\label{lemma:diagram}
For every $ a_1\in \mathbb{N}$ there exist $a_2, a_3\in \mathbb{N}$ such that, for a closed basic semialgebraic set $S=\B(\mathcal{P}, \mathcal{Q};\R)\subset\R^{n}$, with $\#\mathcal{P}, \#\mathcal{Q}\leq a_1$ and with every polynomial in $\mathcal{P}\cup \mathcal{Q}$ of degree bounded by $d$, and any $1\leq k  \leq n$, it holds
	\begin{equation}
	D(\widetilde{S}^{k})=(n, a_2, a_3 d),
	\end{equation}
	where $\widetilde{S}^{k} \subset R^{n}$ is the set associated with $S$ by \cref{remark:construction}. 
\end{lemma}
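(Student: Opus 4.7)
The plan is to write down an explicit representation of $\widetilde{S}^k$ from \cref{remark:construction} and carefully count its polynomial conditions and their degrees, verifying that none of these quantities depend on $n$ or $k$ beyond a linear factor $d$ in the degree. From the construction, $\#\widetilde{\mathcal{P}}=1$, $\#\widetilde{\mathcal{Q}}\leq a_1$, and every polynomial involved has degree at most $2d+2$. Since
\begin{equation}
\widetilde{S}^k=\bigcup_{\widetilde{\mathcal{Q}}'\subseteq\widetilde{\mathcal{Q}}}\mathrm{Crit}_k(\widetilde{\mathcal{P}}\cup\widetilde{\mathcal{Q}}';R)\cap\widetilde{S},
\end{equation}
the outer union involves at most $2^{a_1}$ summands, and it remains to bound the complexity of a single summand $\mathrm{Crit}_k(\widetilde{\mathcal{P}}\cup\widetilde{\mathcal{Q}}';R)\cap\widetilde{S}$.

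The intersection with $\widetilde{S}=\mathrm{Bas}(\widetilde{\mathcal{P}},\widetilde{\mathcal{Q}};R)$ contributes $\leq 1+a_1$ sign conditions of degree $\leq 2d+2$, and the algebraic part $Z(\widetilde{\mathcal{P}}\cup\widetilde{\mathcal{Q}}';R)$ of $\mathrm{Crit}_k$ adds at most $1+a_1$ more equations of the same degree. The main obstacle is the rank condition $\mathrm{rank}(\widetilde{J}(x))\leq k$ from \eqref{eq:critf}: expressed naively as the vanishing of all $(k+1)$-minors of the $(n-k)\times(s+1)$ matrix $\widetilde{J}$, it introduces $\binom{n-k}{k+1}\binom{s+1}{k+1}$ polynomial equations, a number depending on $n$.

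To remove this $n$-dependence I would replace $\widetilde{J}$ by the Gram matrix $\widetilde{J}^T\widetilde{J}$, of size $(s+1)\times(s+1)$. Over an ordered field one has $\mathrm{rank}(\widetilde{J}^T\widetilde{J})=\mathrm{rank}(\widetilde{J})$: the identity $v^T\widetilde{J}^T\widetilde{J}v=\|\widetilde{J}v\|^2$ exhibits the quadratic form as a sum of squares, and in an ordered field a sum of squares vanishes if and only if each square does, so the two matrices share the same kernel. Since $R$ is real closed, this applies here. Each entry of $\widetilde{J}^T\widetilde{J}$ is a sum of products of two partial derivatives of the $f_i$ or of $g$, hence a polynomial of degree $\leq 2(2d+1)=4d+2$, and the rank condition becomes the vanishing of the $\binom{s+1}{k+1}^2\leq 4^{a_1+2}$ many $(k+1)$-minors of $\widetilde{J}^T\widetilde{J}$, a number bounded purely in terms of $a_1$.

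Finally, the rank condition is automatically satisfied whenever $k+1>\min(n-k,s+1)$, so I may restrict to $k+1\leq s+1\leq a_1+2$, which makes each such minor of degree $\leq (a_1+2)(4d+2)\leq a_3 d$ for $a_3:=6(a_1+2)$ (using $d\geq 1$; the case $d=0$ is trivial). Each summand is therefore cut out by at most $2(1+a_1)+4^{a_1+2}$ polynomial conditions of degree $\leq a_3 d$, and there are at most $2^{a_1}$ summands in the union over $\widetilde{\mathcal{Q}}'$; expanding the ``$\leq 0$'' conditions coming from $\widetilde{S}$ into the sign form $(<0)\lor (=0)$ required by the representation \eqref{eq:defA} only multiplies the number of disjuncts by a further factor of $2^{a_1}$. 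Collecting the bounds yields $D(\widetilde{S}^k)=(n,a_2,a_3 d)$ for a suitable $a_2=a_2(a_1)$ depending only on $a_1$, as claimed.
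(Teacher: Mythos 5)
Your proof is correct, but it follows a genuinely different route from the paper's. Both approaches have to neutralize the same obstruction: naively, $\mathrm{rank}(\widetilde{J}(x))\leq k$ is expressed as the vanishing of all $(k+1)$-minors of the $(n-k)\times(s+1)$ matrix $\widetilde{J}$, and the number $\binom{n-k}{k+1}\binom{s+1}{k+1}$ of such minors depends on $n$. You sidestep this by passing to the Gram matrix $\widetilde{J}^T\widetilde{J}$, which is square of size $(s+1)\times(s+1)$ with $s+1\leq a_1+2$, and invoking the rank identity $\mathrm{rank}(\widetilde{J}^T\widetilde{J})=\mathrm{rank}(\widetilde{J})$ valid over any ordered (in particular real closed) field; this leaves at most $\binom{s+1}{k+1}^2\leq 4^{a_1+2}$ minor equations of bounded degree. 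The paper instead exploits that $R$ is real closed to collapse \emph{all} the minors into a \emph{single} polynomial equation $P_{\widetilde{\mathcal{Q}}'}=\sum f_i^2+\sum m_{ij}^2=0$, so the count of minors (however large) never enters the diagram at all, only the degree does — and the degree is controlled because $k\leq s$. The paper then goes one step further and absorbs the outer union over $\widetilde{\mathcal{Q}}'$ into a single equation $\widetilde{F}:=\prod_{\widetilde{\mathcal{Q}}'}P_{\widetilde{\mathcal{Q}}'}=0$, so that $\widetilde{S}^k$ is actually realized as a single closed basic set $Z(\widetilde{\mathcal{P}};R)\cap\bigcap_{\tilde{q}\in\widetilde{\mathcal{Q}}}\{\tilde{q}\leq 0\}\cap Z(\widetilde{F};R)$ before the final expansion of the $\leq 0$ conditions. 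Your approach keeps the union over $\widetilde{\mathcal{Q}}'$ and a bounded family of minor equations, which is less compact but equally valid since the resulting combinatorial count still depends only on $a_1$; the Gram-matrix reduction is arguably more transparent (no sum-of-squares collapse needed) while the paper's sum-of-squares-and-product trick yields a cleaner closed basic presentation of $\widetilde{S}^k$. Your reduction to $k\leq s$ via the observation that the rank condition is vacuous for $k+1>\min(n-k,s+1)$ parallels the paper's use of the same restriction, and your degree and cardinality bookkeeping is sound.
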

\begin{proof}Let $\widetilde{\mathcal{P}}, \widetilde{\mathcal{Q}}\subset R[x_1, \ldots, x_{n}]$ be the family of polynomials of \cref{remark:construction} such that 	$\widetilde{S}=\B(\widetilde{\mathcal{P}}, \widetilde{\mathcal{Q}};R)$. These polynomials have degree bounded by $2d+2$.
	
	Then, for every subset $\widetilde{\mathcal{Q}}'\subset \widetilde{\mathcal{Q}}$ (there are only finitely many such subsets, say at most $a_4=a_4(a_1)$) one considers the family $\mathcal{F}=\widetilde{\mathcal{P}}\cup \widetilde{\mathcal{Q}}'=\{f_1, \ldots, f_s\}$ (here $s\leq a_4+1$) and defines the algebraic set $\mathrm{Crit}_{k}(\mathcal{F})$ as in \eqref{eq:critf}. Since $R$ is real closed, this algebraic set is defined by a single polynomial equation $P_{\widehat{\mathcal{Q}}'}(x)=0$,
	where 
	\begin{equation}
	P_{\widehat{\mathcal{Q}}'}=\sum_{i=1}^s f_i^2+\sum m_{ij}^2.
	\end{equation}
	Here, the second sum runs over all the $(k+1)\times (k+1)$ minors of $\widetilde{J}(x)$. Remember that, $\widetilde{J}(x)$ is a $(n-k)\times (s+1)$ matrix, hence if $k> s$ there are no such minors, so we can assume that $k\leq s$. Therefore the polynomial $P_{\widehat{\mathcal{Q}}'}$ has degree
	\begin{equation}
	\deg\left(P_{\widehat{\mathcal{Q}}'}\right)\leq  (k+1)(4d+4)  \leq (s+1)(4d+4) \leq a_5 d,
	\end{equation}
 where $a_5=a_5(a_1)$. 
 
 The set $\widetilde{S}^\ell$ from \cref{remark:construction} can therefore be written as:
	\begin{align}
	\widetilde{S}^\ell & = \bigcup_{\widetilde{\mathcal{Q}}'\subset \widetilde{\mathcal{Q}}}\mathrm{Crit}_{k}(\widetilde{\mathcal{P}}\cup \widetilde{\mathcal{Q}}';R)\cap \widetilde{S}=\widetilde{S}\cap\bigcup_{\widehat{\mathcal{Q}}'\subset \widehat{\mathcal{Q}}}Z(P_{\widehat{\mathcal{Q}}'};R)=\widetilde{S}\cap Z(\widetilde{F};R) \\
	&  = Z(\widetilde{\mathcal{P}};R)  \bigcap_{\tilde{q}\in\widetilde{\mathcal{Q}}} \{\tilde{q} \leq 0\} \cap Z(\widetilde{
	F};R),\label{eq:firstpresStildel}
	\end{align}
	where $\widetilde{\mathcal{P}},\widetilde{\mathcal{Q}}$ are the families defining the basic set $\widetilde{S}$, and
	\begin{equation}
	\widetilde{F}:=\prod_{\widetilde{\mathcal{Q}}'\subset \widetilde{\mathcal{Q}}}P_{\widetilde{\mathcal{Q}}'},
	\end{equation}
	which is a polynomial of degree bounded by $a_4a_5 d=a_3 d$, with $a_3$ depending only on $a_1$. To obtain a presentation as in \eqref{eq:defA} note that if $\{p_1,\dots,p_L\}$ is a family of polynomials, then
	\begin{equation}\label{eq:rewriting}
			\bigcap_{i =1}^L \{ p_i \leq 0\}  = 	\bigcap_{i = 1}^L \Big( \{ p_i < 0\} \cup \{p_i = 0\} \Big)
		= \bigcup_{\sigma} \bigcap_{i=1}^L \{\mathrm{sign}(p_i) = \sigma_i\},
		\end{equation}
	where $\sigma$ runs over all possible choices in $\{0,-1\}^{L}$. We can apply identity \eqref{eq:rewriting} to \eqref{eq:firstpresStildel}. Since $\#\widetilde{\mathcal{P}}, \#\widetilde{\mathcal{Q}}\leq a_1$, we get that $\widetilde{S}^\ell$ admits a representation as in \eqref{eq:defA} with $a=2^L$, $b_i = L$, for $L=2a_1+1$. In other words $D(\widetilde{S}^\ell)=(n,a_2,a_3 d)$, for $a_2 = (2a_1+1)2^{2a_1+1}$.
\end{proof}
The sets from  \cref{remark:construction} enjoy the following properties. For $1\leq k\leq n$ let \[
\pi_k:R^{n}\to R^k
\] 
be the projection onto the last $k$ coordinates. Since $\R \subset R  =\R\la\zeta_1,\zeta_2,\zeta_3\ra$, the same symbol $\pi_k:\R^{n}\to \R^k$ is used to denote the restriction to $\R^n$, without risk of confusion.

\begin{proposition}\label{prop:5properties}
Let $S=\mathrm{Bas}(\mathcal{P}, \mathcal{Q};\R)\subset \R^{n}$ be a closed basic and bounded semialgebraic set such that, for some $1\leq k\leq n$ it holds
\begin{equation}\label{eq:finite}
\forall y\in \R^k\quad \#\left(Z(\mathcal{\mathcal{P}};\R)\cap \pi_{k}^{-1}(y)\right)<\infty,
\end{equation}
i.e.\ $Z(\mathcal{\mathcal{P}};\R)$ is strongly of dimension $\leq k$. Then for any $\ell < k$, the sets $\widetilde{S}^\ell\subseteq \widetilde{S}\subset \R\la\zeta_1,\zeta_2,\zeta_3\ra^n$ from \cref{remark:construction} are closed and bounded over $\R$. Moreover they satisfy the following properties.  For every $w\in \R\la\zeta_1,\zeta_2,\zeta_3\ra^\ell$
	\begin{enumerate}[(i)]
		\item \label{property1} the set $\widetilde{S}^\ell\cap \pi_\ell^{-1}(w)$ is finite, i.e.\ $\widetilde{S}^\ell$ is strongly of dimension $\leq \ell$;
		\item \label{property2} if $\widetilde{S}\cap\pi_{\ell}
		^{-1}(w)\neq \emptyset$, the set $\widetilde{S}^\ell\cap \pi_\ell^{-1}(w)$ intersects every connected component of $\widetilde{S}\cap\pi_\ell	^{-1}(w)$;
		\item \label{property3} it holds ${{\Lim}_0}(\widetilde{S}) = S$.
	\end{enumerate}
	Furthermore, for every $\epsilon\in \R$, $\epsilon>0$, for $0<t_1\ll \dots \ll t_m \ll 1$ the evaluation $S|_{t_1,\dots,t_m}$ is defined and the following properties hold:
\begin{enumerate}[label=(\roman*), start=4]
\item \label{property4} $\displaystyle  \mathrm{dist}_H(\widetilde{S}|_{t_1,\dots,t_m},S)<\epsilon$;
\item\label{property5} $\displaystyle  \mathrm{dist}_H\left(\pi_\ell(\widetilde{S}|_{t_1,\dots,t_m}),\pi_\ell(S)\right)<\epsilon$.
\end{enumerate}	
\end{proposition}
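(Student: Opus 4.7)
Properties (i)--(iii) are the content of the Basu--Roy construction being adopted from \cite{BR}, once boundedness over $\R$ of $\widetilde{S}^\ell\subseteq \widetilde{S}$ is verified; properties (iv)--(v) will then follow by specializing \cref{propo:zeta}. I would begin by recalling the explicit form of $\widetilde{\mathcal{P}}, \widetilde{\mathcal{Q}}$: each element of $\mathcal{Q}$ gets a small infinitesimal added to relax the non-strict inequalities, and $\mathcal{P}$ is replaced by a single polynomial obtained from a sum of squares, possibly with a boundedness-enforcing term built using $g$. Since the three infinitesimals $\zeta_1,\zeta_2,\zeta_3$ can be chosen arbitrarily small over $\R$, boundedness of $\widetilde{S}$ (and hence of $\widetilde{S}^\ell\subseteq\widetilde{S}$) over $\R$ follows from boundedness of $S$ by the same germ-theoretic argument used in the proof of \cref{propo:approxclosed} (any representative is eventually forced into a ball slightly larger than the one containing $S$).

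\textbf{Properties (i), (ii), (iii).} I would invoke the Basu--Roy machinery for these, checking that the hypotheses translate correctly from our setting to theirs. Property (iii), $\Lim_0(\widetilde{S})=S$, is the assertion that the basic set $\widetilde{S}$ specializes to $S$ as the three infinitesimals tend to zero; it can be checked directly from the structure of $\widetilde{\mathcal{P}},\widetilde{\mathcal{Q}}$ (each perturbation is polynomial in the $\zeta_i$ with the right signs, and the reasoning mirrors \cref{closurebasicset}). Property (ii) is the Morse-theoretic content of \cite{BR}: on each nonempty fiber $\widetilde{S}\cap \pi_\ell^{-1}(w)$, the restriction of $g$ attains its extrema on every connected component, and such extrema are critical points of $g$ constrained to some stratum of $\widetilde{S}$; the union over subsets $\widetilde{\mathcal{Q}}'\subseteq\widetilde{\mathcal{Q}}$ in the definition of $\widetilde{S}^\ell$ is precisely what captures all such strata. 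Property (i) is the strong dimension statement: the rank condition $\mathrm{rank}(\widetilde{J}(x))\leq \ell$ combined with the Sard-type transversality achieved by the perturbation forces the fibers of $\pi_\ell|_{\widetilde{S}^\ell}$ to be algebraic and $0$-dimensional, hence finite. The strong dimension hypothesis \eqref{eq:finite} on $Z(\mathcal{P};\R)$ enters here to ensure this count remains uniform across fibers.

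\textbf{Properties (iv), (v).} These follow directly by applying \cref{propo:zeta} (with $m=3$) to the bounded semialgebraic set $\widetilde{S}\subset R^n$. Indeed, by (iii) we have $\lambda_{\zeta_1}\lambda_{\zeta_2}\lambda_{\zeta_3}(\widetilde{S})=\Lim_0(\widetilde{S})=S$, so for every $\epsilon>0$ and for $0<t_1\ll t_2\ll t_3\ll 1$,
\begin{equation}
\mathrm{dist}_H\bigl(\widetilde{S}|_{t_1,t_2,t_3},\,S\bigr)<\epsilon,
\end{equation}
giving (iv). For (v), since $\pi_\ell:R^n\to R^\ell$ is $1$-Lipschitz for the Euclidean norm, one has the standard Hausdorff contraction
\begin{equation}
\mathrm{dist}_H\bigl(\pi_\ell(\widetilde{S}|_{t_1,t_2,t_3}),\,\pi_\ell(S)\bigr)\leq \mathrm{dist}_H\bigl(\widetilde{S}|_{t_1,t_2,t_3},\,S\bigr)<\epsilon,
\end{equation}
and this inequality transfers from $\R$ to $R\la\zeta_1,\zeta_2\ra$ (where the $t_i$'s actually live) by the transfer principle applied to the semialgebraic Hausdorff distance of \cref{def:hausdorff}.

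\textbf{Main obstacle.} The delicate step is (ii): verifying that in the multi-infinitesimal perturbation the critical set $\widetilde{S}^\ell$ still meets every connected component of every fiber $\widetilde{S}\cap \pi_\ell^{-1}(w)$. This requires carefully matching our fibered setup to the one in \cite{BR}, tracking how the three infinitesimals interact (boundedness/perturbation/Morse-genericity) and how the critical point formulation of $\widetilde{S}^\ell$ via the matrix $\widetilde{J}(x)$ captures constrained critical points along every stratum; once this is articulated, (i) is a dimensional corollary and the remaining items follow as above.
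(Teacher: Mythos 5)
Your proposal is correct and follows essentially the same route as the paper: cite the Basu--Roy results \cite[Prop.\ 5.5 and Prop.\ 5.17]{BR} for properties \eqref{property1}--\eqref{property3} once boundedness is verified from the construction, deduce \eqref{property4} from \eqref{property3} via \cref{propo:zeta}, and obtain \eqref{property5} from the $1$-Lipschitz behaviour of $\pi_\ell$ with respect to $\mathrm{dist}_H$. One small remark: the appeal to the transfer principle at the very end is superfluous, since after evaluating at $(t_1,t_2,t_3)$ the sets $\widetilde{S}|_{t_1,t_2,t_3}$ and $S$ both sit in $\R^n$, so the $1$-Lipschitz estimate is already an ordinary statement over $\R$.
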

\begin{proof}
The fact that $\widetilde{S}^\ell\subseteq \widetilde{S}\subset\R\la\zeta_1,\zeta_2,\zeta_3\ra^n$ are closed, and bounded follows directly by their construction. \cref{property1,property2,property3} are proved in \cite[Prop.\ 5.5 and Prop.\ 5.17]{BR}, while \cref{property4} follows from \cref{property3} and \cref{propo:zeta}. To conclude, observe that $\pi_\ell : \R^n\to \R^\ell$ is $1$-Lipschitz with respect to $\mathrm{dist}_H$, therefore \cref{property5} follows from \cref{property4}.
\end{proof}

 \subsection{Approximate definable choice: the case of a projection}

The purpose of this section is to prove \cref{thm:appsel-intro}. We begin with the following preliminary version of the latter, for the case of closed basic sets.

 \begin{proposition}\label{propo:intermediate}
Let $\pi:\R^{n}\to \R^\ell$, with $1\leq \ell\leq n$ be the projection onto the last $\ell$ coordinates. For every $a_1\in \mathbb{N}$ there exist $a_2, a_3\in \N$ such that the following holds. For every closed basic and bounded semialgebraic set 
	\begin{equation}
	S=\B(\mathcal{P}, \mathcal{Q};\R) \subset \R^{n},
	\end{equation}
	with $\#\mathcal{P}, \#\mathcal{Q}\leq a_1$ and with every polynomial in $\mathcal{P}\cup \mathcal{Q}$ of degree bounded by $d$, and for all $\epsilon>0$ there exists a semialgebraic set $A_\epsilon \subset \R^{n}$ satisfying the following properties:
	\begin{enumerate}[(i)]
		\item \label{item:intermediate1} $\dim(A_\epsilon)\leq \ell$;
		\item \label{item:intermediate2}$A_\epsilon \subseteq \mathcal{U}_\epsilon(S)$;
		\item \label{item:intermediate3}$\mathrm{dist}_{H}(\pi(A_\epsilon), \pi(S))\leq \epsilon$;
		\item \label{item:intermediate4} $D(A_\epsilon)=(n, a_2, a_3 d)$.
	\end{enumerate}
\end{proposition}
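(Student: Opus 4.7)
The plan is to realize $A_\epsilon$ as a small evaluation of the auxiliary set $\widetilde{S}^\ell \subset \R\la\zeta_1,\zeta_2,\zeta_3\ra^n$ produced by \cref{remark:construction}, using \cref{prop:5properties} together with the Hausdorff approximation machinery of \cref{sec:Hsa}. This reduces the proposition to assembling ingredients that are already in place.

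First, the trivial case $\ell = n$ is handled by taking $A_\epsilon := S$ itself; applying the rewriting identity \eqref{eq:rewriting} to the basic description of $S$ produces a representation of the form \eqref{eq:defA} with constants depending only on $a_1$ and $d$, and all four conclusions hold immediately. For $\ell < n$, I observe that the hypothesis of \cref{prop:5properties} is automatically satisfied with $k = n$: the projection $\pi_n$ is the identity, so all its fibers are singletons, and in particular $Z(\mathcal{P};\R)$ is trivially strongly of dimension $\leq n$. Applying \cref{prop:5properties} with this $k$ yields $\widetilde{S}^\ell \subseteq \widetilde{S} \subset \R\la\zeta_1,\zeta_2,\zeta_3\ra^n$ enjoying the five listed properties, and \cref{lemma:diagram} provides the diagram bound $D(\widetilde{S}^\ell) = (n, a_2, a_3 d)$ for constants $a_2, a_3$ depending only on $a_1$.

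I then set $A_\epsilon := \widetilde{S}^\ell|_{t_1, t_2, t_3}$ for a triple $0 < t_1 \ll t_2 \ll t_3 \ll 1$ (in the sense of \cref{def:suffsmall}) chosen small enough that all four conclusions hold. Item (i) follows from \cref{lemma:cardinalitaFormula}: since $\widetilde{S}^\ell$ is strongly of dimension $\leq \ell$ by property (i) of \cref{prop:5properties}, so is its evaluation, hence $\dim A_\epsilon \leq \ell$. Item (ii) follows from $\widetilde{S}^\ell \subseteq \widetilde{S}$ together with property (iv) of \cref{prop:5properties}, giving $A_\epsilon \subseteq \widetilde{S}|_{t_1,t_2,t_3} \subseteq \mathcal{U}_\epsilon(S)$. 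For item (iii), since $\pi$ is defined by a pure existential quantifier block with no Puiseux coefficients, evaluation commutes with $\pi$, yielding $\pi(A_\epsilon) = \pi(\widetilde{S}^\ell)|_{t_1,t_2,t_3}$; the fiber-hitting property (ii) of \cref{prop:5properties} gives $\pi(\widetilde{S}^\ell) = \pi(\widetilde{S})$, and property (v) of \cref{prop:5properties} then provides the required Hausdorff bound. Finally, item (iv) follows from the invariance of the diagram under evaluation, \cref{lem:diagramext}.

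The only subtlety I anticipate is ensuring that all the smallness conditions on $(t_1, t_2, t_3)$ can be arranged simultaneously. Each of the four verifications above invokes a ``for $0 < t_1 \ll t_2 \ll t_3 \ll 1$'' statement which, unpacked via \cref{def:suffsmall}, requires $t_3$ to lie below some $\delta_3 \in \R\la\zeta_1,\zeta_2\ra$, then $t_2$ below some $\delta_2 \in \R\la\zeta_1\ra$, and finally $t_1$ below some $\delta_1 \in \R$. Intersecting the finitely many admissible ranges in the nested order $t_3, t_2, t_1$ yields a valid triple, so no genuine difficulty arises beyond carefully tracking the iterated nature of smallness inherent to the multi-infinitesimal formalism.
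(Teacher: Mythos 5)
Your plan matches the paper's almost step for step: apply \cref{remark:construction} to build $\widetilde{S}^\ell\subseteq\widetilde{S}\subset\R\la\zeta_1,\zeta_2,\zeta_3\ra^n$, set $A_\epsilon:=\widetilde{S}^\ell|_{t_1,t_2,t_3}$ for $0<t_1\ll t_2\ll t_3\ll 1$, and verify \cref{item:intermediate1,item:intermediate2,item:intermediate3,item:intermediate4} using \cref{prop:5properties}, \cref{lemma:cardinalitaFormula}, \cref{lem:diagramext}, and \cref{lemma:diagram}. The verifications of the four items are sound, and your more explicit treatment of \cref{item:intermediate3} --- commuting projection with evaluation and using $\pi_\ell(\widetilde{S}^\ell)=\pi_\ell(\widetilde{S})$ via the fiber-hitting property --- is actually a bit cleaner than the paper's terse invocation of \cref{property5}. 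The ``simultaneous smallness'' discussion at the end is also correctly handled.

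However, there is one substantive gap. You dispense with the linear change of variables (\cref{propo:mather}) by observing that the hypothesis \eqref{eq:finite} of \cref{prop:5properties} is ``automatically satisfied with $k=n$'', since $\pi_n$ is the identity. It is true that the literal wording of \eqref{eq:finite} with $k=n$ is vacuously satisfied. But that reading makes the hypothesis of \cref{prop:5properties} content-free, which cannot be the intent: the whole point of the hypothesis (and of the citation to \cite[Prop.\ 5.5 and 5.17]{BR}) is that the algebraic set $Z(\mathcal{P};\R)$ is in a genuinely \emph{generic} position with respect to the projecting subspace, so that the critical-locus construction $\mathrm{Crit}_\ell$ from \cref{remark:construction} actually carves out a set of the right dimension that hits every fiber component. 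The paper's proof achieves this by applying \cref{propo:mather} to put $Z(\mathcal{P};\R)$ into generic position via a small invertible linear map $L$, then transporting the resulting approximate choice $A_\epsilon'$ back through $L^{-1}$. Your observation that $k=n$ formally satisfies the hypothesis sidesteps the genericity that the construction requires; if $Z(\mathcal{P};\R)$ contains a component parallel to the kernel of $\pi_\ell$, the perturbation to $\widetilde{\mathcal{P}}$ alone is not guaranteed to rescue the construction. So you need to retain the linear change of variables step (or argue separately that the perturbation in \cref{remark:construction} already achieves the needed genericity, which is not stated in the paper). Your choice of trivial case ($\ell=n$) versus the paper's ($\dim S\le\ell$) is merely a bookkeeping difference and is not itself an issue.
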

\begin{proof}If  $\dim(S)\leq \ell$ and one can simply take $A_\epsilon=S$. 

Assume then that $k=\dim(S)>\ell$. Let $\widetilde{S}^\ell\subseteq \widetilde{S} \subset R^{n}$, where $R = \R\la\zeta_1,\zeta_2,\zeta_3\ra$ be the set defined by \cref{remark:construction}.  The strategy is to define
\begin{equation}\label{eq:Sepsdef}
A_\epsilon:=\widetilde{S}^\ell|_{t_1,t_2,t_3}
\end{equation}
for an appropriate choice of ``sufficiently small'' $(t_1,t_2,t_3)$ and to use \cref{prop:5properties}.
	However, in order to use it we need first to ensure that  the condition \eqref{eq:finite} is satisfied.  In order to do it, using \cref{propo:mather}, we can first perform a small linear change  of variables $L:\R^{n}\to \R^{n}$ (i.e.\ we choose $L$ sufficiently close to the identity) in the  defining polynomials and get new families $\mathcal{P}', \mathcal{Q}'$ with the property that: $\#\mathcal{P}'=\#\mathcal{P}$, $\#\mathcal{Q}'=\#\mathcal{Q}$; the degrees of all elements of $\mathcal{P}', \mathcal{Q}'$ are bounded by $d$; the Hausdorff distance between 
	$\mathrm{Bas}(\mathcal{P}, \mathcal{Q};\R)$ and $\mathrm{Bas}(\mathcal{P}', \mathcal{Q}';\R){ = L (\mathrm{Bas}(\mathcal{P}, \mathcal{Q};\R))}$ is arbitrarily small; and condition \eqref{eq:finite} is satisfied.
	
	It is elementary to verify that a set $A_\epsilon'\subset \R^{n}$ satisfying the properties of the statement for the semialgebraic set $\mathrm{Bas}(\mathcal{P}',\mathcal{Q}';\R)$ will also satisfy the same properties for the original set $\mathrm{Bas}(\mathcal{P}, \mathcal{Q};\R)$, up to adjusting $\epsilon$.
	Therefore, without loss of generality, we  assume that  \eqref{eq:finite} is  verified for the set $S$.

Set $A_\epsilon$ as in \eqref{eq:Sepsdef}. We show how to chose $(t_1,t_2,t_3)$ so that the desired properties hold.
 
	\begin{enumerate}[(i)]
		\item  By \cref{property1} of \cref{prop:5properties}, the set $\widetilde{S}^\ell$ is strongly of dimension $\leq \ell$. By \cref{lemma:cardinalitaFormula} for $0<t_1 \ll t_2 \ll t_3 \ll 1$ also $\widetilde{S}^\ell|_{t_1,t_2,t_3}$ is strongly of dimension $\leq \ell$.
			\item  By \cref{property4} of \cref{prop:5properties}, for $0<t_1 \ll t_2 \ll t_3\ll 1$ we have 
			\[
			\mathrm{dist}_H(\widetilde{S}|_{t_1,t_2,t_3},S)<\epsilon.
			\] 
			Since $\widetilde{S}^\ell\subseteq \widetilde{S}$ (in fact the formula defining $\widetilde{S}^\ell$ contains the formula defining $\widetilde{S}$ as a conjunction, by definition \eqref{eq:Sl}), for such $(t_1,t_2,t_3)$ we also have 
				 $\widetilde{S}^{\ell}|_{t_1,t_2,t_3} \subseteq \mathcal{U}_\epsilon(S)$.
				 
		\item By \cref{property5} of \cref{prop:5properties}, for $0<t_1 \ll t_2 \ll t_3\ll 1$ we have
		\[
		 \mathrm{dist}_H(\pi(\widetilde{S}|_{t_1,t_2,t_3}),\pi(S))<\epsilon.
		 \]  
		\item By \cref{lem:diagramext} and \cref{lemma:diagram}, for $0<t_1 \ll t_2 \ll t_3\ll 1$ we have 
		\[
		D(\widetilde{S}^\ell|_{t_1,t_2,t_3})=D(\widetilde{S}^\ell )=( n, a_2, a_3 d),
		\] 
		where $a_2,a_3 \in \N$ depend only on $a_1$.
	\end{enumerate}
	Since we are requiring only a finite number of properties, given $\epsilon>0$, the quantifier $0<t_1 \ll t_2 \ll t_3\ll 1$ can be commonly chosen  (see \cref{def:suffsmall}) so that the set \eqref{eq:Sepsdef} satisfies simultaneously \cref{item:intermediate1,item:intermediate2,item:intermediate3,item:intermediate4}. 
\end{proof}
Extending the previous result to general (non-basic) semialgebraic sets, we obtain the following statement, that corresponds to \cref{thm:appsel-intro}.

\begin{theorem}\label{thm:appsel} 
For every $c \in\mathbb{N}$ there exist $\kappa\in \N$ such that the following holds. Let $n,\ell,d\in \ell$, with $1\leq \ell\leq n$. Let $\pi:\R^{n}\to \R^\ell$ be the projection onto the last $\ell$ coordinates and let $S\subset \R^{n}$ be a bounded closed semialgebraic set with 
\[
D(S)=(n, c, d).
\] 
Then, for every $\epsilon>0$ there exists a closed semialgebraic set $	A_\epsilon\subset \R^{n}$ such that:
	\begin{enumerate}[(i)]
		\item \label{item:intermediate1t} $\dim(A_\epsilon)\leq \ell$;
		\item \label{item:intermediate2t}$A_\epsilon\subseteq \mathcal{U}_\epsilon(S)$;
		\item \label{item:intermediate3t}$\mathrm{dist}_{H}(\pi(A_\epsilon), \pi(S))\leq \epsilon$;
		\item \label{item:intermediate4t} $D(A_\epsilon)=(n, \kappa, \kappa d)$.
	\end{enumerate}
\end{theorem}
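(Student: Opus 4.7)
The plan is to bootstrap \cref{propo:intermediate} from the closed basic case to the general closed semialgebraic case, using the Hausdorff approximation \cref{propo:approxclosed} as the bridge. First, given $S$ bounded closed with $D(S) = (n, c, d)$, apply \cref{propo:approxclosed} with threshold $\epsilon/2$ to produce
\[
S' = \bigcup_{i=1}^a \B(\mathcal{P}_i, \mathcal{Q}_i; \R), \qquad \mathrm{dist}_H(S, S') \leq \epsilon/2,
\]
where $\#(\mathcal{P}_i \cup \mathcal{Q}_i) \leq b$, $a b \leq c$, and every polynomial involved has degree at most $d$. In particular $a, b \leq c$, and each $S_i := \B(\mathcal{P}_i, \mathcal{Q}_i; \R) \subseteq S' \subseteq \mathcal{U}_{\epsilon/2}(S)$ is closed, basic and bounded, with $\#\mathcal{P}_i, \#\mathcal{Q}_i \leq c$.

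Next, apply \cref{propo:intermediate} to each $S_i$ with input parameter $a_1 := c$ and threshold $\epsilon/2$. This yields semialgebraic sets $A_\epsilon^i \subset \R^n$ with $\dim(A_\epsilon^i) \leq \ell$, $A_\epsilon^i \subseteq \mathcal{U}_{\epsilon/2}(S_i)$, $\mathrm{dist}_H(\pi(A_\epsilon^i), \pi(S_i)) \leq \epsilon/2$, and $D(A_\epsilon^i) = (n, a_2, a_3 d)$ for constants $a_2 = a_2(c)$, $a_3 = a_3(c)$ depending only on $c$. Define $A_\epsilon := \bigcup_{i=1}^a A_\epsilon^i$. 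Items (i) and (ii) are then immediate: the dimension bound is preserved by finite unions, while the chain $A_\epsilon^i \subseteq \mathcal{U}_{\epsilon/2}(S_i) \subseteq \mathcal{U}_{\epsilon/2}(S') \subseteq \mathcal{U}_\epsilon(S)$ yields (ii). Item (iii) follows from the triangle inequality \eqref{eq:thh}:
\[
\mathrm{dist}_H(\pi(A_\epsilon), \pi(S)) \leq \mathrm{dist}_H(\pi(A_\epsilon), \pi(S')) + \mathrm{dist}_H(\pi(S'), \pi(S)) \leq \epsilon/2 + \epsilon/2,
\]
where the first summand is bounded using $\pi(A_\epsilon) = \bigcup_i \pi(A_\epsilon^i)$ and $\pi(S') = \bigcup_i \pi(S_i)$ together with the piecewise approximation property of each $A_\epsilon^i$, and the second summand by the fact that $\pi$ is $1$-Lipschitz.

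Finally, for (iv), take presentations as in \eqref{eq:defA} of each $A_\epsilon^i$ with combinatorial size at most $a_2$ and polynomial degrees bounded by $a_3 d$. Since $A_\epsilon$ is the union of at most $a \leq c$ such pieces, concatenating the outer disjunctions produces a presentation of $A_\epsilon$ with at most $c \cdot a_2$ outer disjuncts and at most $a_2$ inner conjuncts each, hence combinatorial size $\leq c \cdot a_2^2$, while degrees remain $\leq a_3 d$. Setting $\kappa := \max\{c \cdot a_2(c)^2,\, a_3(c)\}$, a function of $c$ alone, delivers (iv). The substantive content is entirely located upstream, in \cref{propo:approxclosed}: passing from a mixed-sign representation as in \eqref{eq:defA} to a union of closed basic sets is, by classical means, hopelessly uncontrolled in complexity (see the discussion at the beginning of \cref{sec:hacb}), and it is precisely the Puiseux-series machinery developed in \cref{sec:Hsa} that ensures the combinatorial size depends only on $c$, independently of $n$ and $d$; once this is granted, the argument above is routine assembly.
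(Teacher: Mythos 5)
Your proof is correct and follows the same strategy as the paper's: approximate $S$ by a controlled union of closed basic sets via \cref{propo:approxclosed}, apply \cref{propo:intermediate} to each basic piece, take the union of the resulting approximate choices, and finish with the triangle inequality for the Hausdorff distance together with a crude but effective count of disjuncts and conjuncts for the diagram bound. The only differences are cosmetic: you feed $a_1 := c$ into \cref{propo:intermediate} rather than the sharper $a_1 := b$ with $ab \leq c$, and you spell out the combinatorial bookkeeping for item (iv) and the union argument for the Hausdorff distance more explicitly than the paper does -- both harmless and both yielding a $\kappa$ depending only on $c$, exactly as required.
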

\begin{proof}
Applying \cref{propo:approxclosed} we find a closed semialgebraic set $S'\subset \R^{n}$ satisfying
	\[\label{eq:e/2p}
	\mathrm{dist}_{H}(S, S')\leq \frac{\epsilon}{2},
	\]
(in particular, $S'$ is also bounded) and such that
\[\label{eq:Sepsp}
S'=\bigcup_{i=1}^a \mathrm{Bas}(\mathcal{P}_{i}, \mathcal{Q}_i;\R),
\]
with the property that, for every $i=1, \ldots, a$ we have $\#(\mathcal{P}_i\cup \mathcal{Q}_i)\leq b$, with $ab\leq c$, and with each polynomial in $\mathcal{P}_i\cup \mathcal{Q}_i$ of degree bounded by $d$. For every $i=1, \ldots, a$, denote by $S_i':=\mathrm{Bas}(\mathcal{P}_{i}, \mathcal{Q}_i;\R)$ and apply  \cref{propo:intermediate} to each $S_i'$ to get sets $A_{i, \epsilon}$ satisfying the conclusions of \cref{propo:intermediate} (with $\varepsilon/2$ in place of $\varepsilon$). Define
\[\label{eq:Sepsdeft}
A_\epsilon:=\bigcup_{i=1}^a A_{i,\epsilon}.
\]
\begin{enumerate}[(i)]
\item Since for every $i=1, \ldots, a$ we have $\dim(A_{i, \epsilon})\leq \ell$, \cref{item:intermediate1t} follows.

\item As for \cref{item:intermediate2t}, this follows from the fact that $A_\epsilon\subseteq \mathcal{U}_\frac{\epsilon}{2}(S')$ and from \eqref{eq:e/2p}.

\item Similarly, for \cref{item:intermediate3t}, we have
\[
\mathrm{dist}_H(\pi(A_\epsilon), \pi(S))\leq \mathrm{dist}_H(\pi(A_\epsilon), \pi(S'))+\mathrm{dist}_H(\pi(S'), \pi(S))\leq \frac{\epsilon}{2}+\frac{\epsilon}{2} \leq \epsilon.
\]

\item By \cref{item:intermediate4} from \cref{propo:intermediate}, $D(A_{i, \epsilon})= (n, a_2(b), a_3(b)d)$. Therefore $D(A_\epsilon)= (n, \kappa', \kappa' d)$ for some $\kappa'=\kappa'(a_2(b), a_3(b), a)\leq \kappa(c)$. This proves \cref{item:intermediate4t}.
\end{enumerate}
The set $A_\epsilon$ defined by \eqref{eq:Sepsdeft} satisfies \cref{item:intermediate1t,item:intermediate2t,item:intermediate3t,item:intermediate4t}.
\end{proof}

\subsection{Approximate definable choice: the case of a semialgebraic map}

In this section we prove \cref{thm:SemialgebraicSelection-intro}, which we restate here.

\begin{theorem}\label{thm:SemialgebraicSelection}
	For every $ c, d, \ell \in \N$ there exists  $\beta>1$ satisfying the following statement. Let $n\in \N$ and let $K\subset \R^n$ be a closed semialgebraic set contained in the ball $B_{\R^n}(\rho)$, for some $\rho>0$, and let $F :\R^n\to \R^\ell$ be a locally Lipschitz semialgebraic map such that
\begin{equation}
D(\mathrm{graph}(F|_K)) = (n+\ell,c,d).
\end{equation}

Then for every $\epsilon \in (0 , \rho)$ there exists a closed semialgebraic set $C_\epsilon\subset \R^{n}$ such that:
	\begin{enumerate}[(i)]
		\item \label{item:thm1} $\dim(C_\epsilon)\leq \ell$;
		\item  \label{item:thm2} $C_\epsilon \subseteq \mathcal{U}_\epsilon(K)$;
		\item  \label{item:thm3} $\mathrm{dist}_{H}(F(C_\epsilon), F(K))\leq L(F, \rho) \cdot \epsilon$, where
		 $L(F, \rho):=2+\mathrm{Lip}(F, B_{\R^n}(2 \rho))$;
		\item  \label{item:thm4} for every $e=1, \ldots, n$ and every affine space $\R^e\simeq E\subseteq \R^n$, the number of connected components of $E\cap C_\epsilon$ is bounded by

		\begin{equation}b_0(E\cap C_{\epsilon})\leq \beta^e.\end{equation}
	\end{enumerate}
\end{theorem}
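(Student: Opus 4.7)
The plan is to reduce \cref{thm:SemialgebraicSelection} to \cref{thm:appsel} applied to the graph of $F|_K$. Set $S := \mathrm{graph}(F|_K) \subset \R^n \times \R^\ell = \R^{n+\ell}$. Since $K$ is closed and bounded and $F$ is continuous, $S$ is closed and bounded, with $D(S) = (n+\ell, c, d)$ by hypothesis. Let $\pi : \R^{n+\ell} \to \R^\ell$ be the projection onto the last $\ell$ coordinates and $p : \R^{n+\ell} \to \R^n$ the projection onto the first $n$ coordinates, so that $\pi(S) = F(K)$ and $p(S) = K$. Apply \cref{thm:appsel} to the pair $(S, \pi)$ to obtain a closed semialgebraic set $A_\epsilon \subset \R^{n+\ell}$ with $\dim(A_\epsilon) \leq \ell$, $A_\epsilon \subseteq \mathcal{U}_\epsilon(S)$, $\mathrm{dist}_H(\pi(A_\epsilon), F(K)) \leq \epsilon$ and diagram $D(A_\epsilon) = (n+\ell, \kappa, \kappa d)$ with $\kappa = \kappa(c)$; then define $C_\epsilon := p(A_\epsilon)$, which is closed (image of a compact set under a continuous map) and semialgebraic.

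Properties \cref{item:thm1} and \cref{item:thm2} are then immediate: $\dim(C_\epsilon) \leq \dim(A_\epsilon) \leq \ell$ by \eqref{eq:dimension2}, and for any $(x,y) \in A_\epsilon$ at distance $\leq \epsilon$ from some $(x', F(x')) \in S$ one has $\|x-x'\| \leq \epsilon$, so $C_\epsilon \subseteq \mathcal{U}_\epsilon(K)$. For \cref{item:thm3}, using $\epsilon < \rho$ and $x' \in K \subset B_{\R^n}(\rho)$, the point $x$ lies in $B_{\R^n}(2\rho)$, hence
\[
\|F(x) - y\| \leq \|F(x) - F(x')\| + \|F(x') - y\| \leq \bigl(1 + \mathrm{Lip}(F, B_{\R^n}(2\rho))\bigr)\,\epsilon.
\]
This shows $\mathrm{dist}_H(F(C_\epsilon), \pi(A_\epsilon)) \leq (1 + \mathrm{Lip}(F, B_{\R^n}(2\rho)))\epsilon$, and combining with $\mathrm{dist}_H(\pi(A_\epsilon), F(K)) \leq \epsilon$ via the triangle inequality delivers the bound $L(F,\rho)\,\epsilon$.

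For the key property \cref{item:thm4}, fix an affine subspace $E \subset \R^n$ with $\dim E = e$ and set $E' := p^{-1}(E) \subset \R^{n+\ell}$, which is affine of dimension $e+\ell$. Then $E \cap C_\epsilon = p(A_\epsilon \cap E')$, and since continuous maps do not increase the number of connected components it suffices to bound $b_0(A_\epsilon \cap E')$. Picking a linear isomorphism $\phi : \R^{e+\ell} \to E'$, the preimage $\phi^{-1}(A_\epsilon \cap E')$ is a semialgebraic subset of $\R^{e+\ell}$ cut out by the pullbacks of the polynomials presenting $A_\epsilon$, and therefore inherits the diagram $(e+\ell, \kappa, \kappa d)$. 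The Thom--Milnor bound \cref{item:milnorbound} then gives
\[
b_0(A_\epsilon \cap E') \leq \beta_{\mathrm{tm}}(\kappa)\,(\kappa d)^{e+\ell} = \bigl[\beta_{\mathrm{tm}}(\kappa)(\kappa d)^\ell\bigr](\kappa d)^e,
\]
so the conclusion holds with $\beta := \beta_{\mathrm{tm}}(\kappa(c))\,(\kappa(c) d)^{\ell+1}$, which depends only on $c, d, \ell$.

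The reduction itself is routine; the hard work has already been done in \cref{thm:appsel}. The subtle point to emphasize in the proof of \cref{item:thm4} is that restricting $A_\epsilon$ to an affine subspace of dimension $e+\ell$ and viewing the outcome intrinsically in $\R^{e+\ell}$ preserves the combinatorial parameter $\kappa = \kappa(c)$ of $A_\epsilon$; this is what allows the factor $(\kappa d)^\ell$ to be absorbed into a constant depending only on $c, d, \ell$ and produces the required $n$-independent exponential bound $\beta^e$.
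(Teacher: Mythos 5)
Your proof is correct and follows essentially the same route as the paper: apply \cref{thm:appsel} to $\mathrm{graph}(F|_K)$, project the resulting $A_\epsilon$ onto $\R^n$ to obtain $C_\epsilon$, and slice by $E\times\R^\ell$ to get the Thom--Milnor bound. Two very small remarks: the parametrisation $\phi:\R^{e+\ell}\to E'$ should be called \emph{affine} rather than linear (since $E'$ need not pass through the origin), though this changes nothing; and for \cref{item:thm3} you triangulate through $\pi(A_\epsilon)$ using the bound $\mathrm{dist}_H(F(C_\epsilon),\pi(A_\epsilon))\leq(1+\mathrm{Lip})\epsilon$, which is a slightly tidier organisation than the paper's two separate containment arguments, while landing on the same constant $L(F,\rho)=2+\mathrm{Lip}(F,B_{\R^n}(2\rho))$.
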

\begin{proof}
Let $\R^{n+\ell}=\R^n\times \R^\ell$ and denote by $\pi_1:\R^{n+\ell}\to \R^n$ and $\pi_2:\R^{n+\ell}\to \R^\ell$ the projections on the two factors. 

Given $\epsilon>0$, let $A_\epsilon\subset \R^{n+\ell}$ be the semialgebraic set obtained by applying \cref{thm:appsel} to $S = \mathrm{graph}(F|_K) \subset \R^{n+\ell}$, noting that by assumption the latter is a bounded and closed semialgebraic set with $D(S) = (n+\ell,c,d)$.
	 We define
	\begin{equation}
	C_\epsilon:=\pi_1(A_\epsilon).
	\end{equation}
	This set is semialgebraic and closed (it is a continuous image of a compact semialgebraic set).
	
	We verify that the set $C_\epsilon$ has the desired properties.
	\begin{enumerate}[(i)]
		\item  By \cref{item:intermediate1t} of \cref{thm:appsel}, $\dim(A_\epsilon)\leq \ell$. Therefore, by \eqref{eq:dimension2}, $\dim(C_\epsilon)\leq \ell$.
		\item Let $x\in C_\epsilon$. Then, by \cref{item:intermediate2t} of \cref{thm:appsel}, there exists $y$ such that $x=\pi_1(x, y)$ with $(x, y)\in A_\epsilon \subseteq \mathcal{U}_\epsilon(S)$. Therefore there exists $(x', y')\in S$ such that 
		\begin{equation}
		\|x-x'\|+\|y-y'\|\leq \epsilon.
		\end{equation}
		This implies that $\|x-x'\|\leq \epsilon$ and, since $x'=\pi_1(x', y')\in \pi_1(S)=K$, we get $x\in \mathcal{U}_\epsilon(K)$.
		Since this is true for all $x\in C_\epsilon$, then $C_\epsilon\subseteq \mathcal{U}_\epsilon(K)$.
		\item
		 Recall that we have set $L(F, \rho)=2 + \mathrm{Lip}(F, B_{\R^n}(2\rho))=:L$. We need to prove the two inclusions $F(C_\epsilon)\subseteq \mathcal{U}_{L\epsilon}(F(K))$ and $F(K)\subseteq \mathcal{U}_{L\epsilon}(F(C_\epsilon))$.

			We first prove that $F(C_\epsilon)\subseteq \mathcal{U}_{L\epsilon}(F(K))$.
		 Pick $c \in C_{\epsilon}$, then there exists $s \in A_{\epsilon}$ such  that $c= \pi_1 (s)$. 
		 Since $A_{\epsilon} \subseteq \mathcal{U}_{\epsilon} (S)$ (by \cref{item:intermediate2t} of \cref{thm:appsel}), there exists $z \in S$ such that 
		 $\norm{z - s} \leq \epsilon$. 
		 Now, $\pi_1(z) \in K$ and we have 
		 $\norm{c - \pi_1(z)} 
		 = 
		 \norm{\pi_1(s) - \pi_1(z)} \leq \norm{s - z} \leq \epsilon $. 
		 Hence, for $\epsilon \in (0,\rho)$ we have
		 \begin{align}
			\norm{F(c) - F(\pi_1(z))} 
			& \leq 
			\mathrm{Lip}(F, B_{\R^n}(2\rho))
			\norm{c - \pi_1(z)} \\
			& \leq \mathrm{Lip}(F, B_{\R^n}(2\rho)) \epsilon \leq L(F, \rho) \epsilon.
		 \end{align}
	 This proves that $F(C_{\epsilon})\subseteq \mathcal{U}_{L\epsilon}(F(K))$.

			To prove the other inclusion
		we fix $v \in K$.
		By \cref{item:intermediate3t} of \cref{thm:appsel}
		we have  
		\begin{equation}
				 \mathrm{dist}_{H}(\pi_2(A_\epsilon), F(K))\leq \epsilon.
		\end{equation} Hence, there exists $(x,y) \in A_{\epsilon}$ such that 
		$\norm{F(v) - \pi_2((x , y)) } 
		=
		\norm{F(v) - y }\leq \epsilon$.
		We observe that for $\epsilon \in (0,\rho)$ we have $x \in C_{\epsilon} \subseteq
		\mathcal{U}_{\epsilon}(K) \subseteq B_{\R^n}(2\rho)$.
		To conclude the proof we estimate 
		$\norm{F(v) - F(x)}$.
		Since $A_{\epsilon} \subseteq \mathcal{U}_{\epsilon} (S)$, we find
		$(w , F(w) ) \in S$ such that
		$\norm{ (x,y) -  (w , F(w))} \leq \epsilon$, and in particular this implies
		$\norm{y - F(w)} \leq \epsilon$ and 
		$\norm{x - w} \leq \epsilon$.
		Hence for $\epsilon \in (0,\rho)$ we have
		\begin{align}
			\norm{F(v) - F(x)} 
			& \leq 
			\norm{F(v) - y} 
			+ \norm{y - F(w)} 
			+ 
			\norm{F(w) - F(x)} \\
			& \leq
		( 2 + \mathrm{Lip}(F , B_{\R^n}(2\rho)) )
		\epsilon=L(F, \rho)\epsilon.
		\end{align}
	This proves that $F(K)\subseteq \mathcal{U}_{L\epsilon}(F(C_\epsilon))$.
		\item Let now $e\in \mathbb{N}$ with $e\leq n$ and $E \simeq \R^e\subseteq \R^n$.  Observe first that $E\cap C_\epsilon$ is the projection on  $E\simeq \R^e$ of $(E \times \R^\ell)\cap A_\epsilon$.  

Note that $D((E\times \R^\ell)\cap A_\epsilon)=(e+\ell,\kappa, \kappa d)$, where $\kappa=\kappa(c)$ is the constant of \cref{item:intermediate4t} from \cref{thm:appsel}. Therefore, the 
		Thom--Milnor bound \eqref{eq:milnorbound} yields
		\[
		b_0(E\cap C_\epsilon)\leq b_0((E \times \R^\ell)\cap A_\epsilon)\leq \beta_{\mathrm{tm}}(2\kappa)(\kappa d)^{e+\ell} \leq \beta(c, d, \ell)^e,
		\]
		for suitable $\beta=\beta(c,d,\ell)>1$.
		\end{enumerate}
		The proof is concluded.
		\end{proof}
		\section*{Declaration}
		\subsection*{Data sharing} Data sharing not applicable to this article as no datasets were generated or analysed during the current study.
\subsection*{Conflict of interest} On behalf of all authors, the corresponding author states that there is no conflict of interest.
	
	\bibliographystyle{alphaabbr}
	\bibliography{selection}

\begin{thebibliography}{BCR98}

\bibitem[BCR98]{BCR}
J.~Bochnak, M.~Coste, and M.-F. Roy.
\newblock {\em G\'eom\'etrie alg\'ebrique r\'eelle (Second edition in english:
  Real Algebraic Geometry)}, volume 12 (36) of {\em Ergebnisse der Mathematik
  und ihrer Grenzgebiete [Results in Mathematics and Related Areas]}.
\newblock Springer-Verlag, Berlin, 1987 (1998).

\bibitem[BD07]{browndavenport}
C.~W. Brown and J.~H. Davenport.
\newblock The complexity of quantifier elimination and cylindrical algebraic
  decomposition.
\newblock In {\em I{SSAC} 2007}, pages 54--60. ACM, New York, 2007.

\bibitem[BL23]{BasuLerario}
S.~Basu and A.~Lerario.
\newblock Hausdorff approximations and volume of tubes of singular algebraic
  sets.
\newblock {\em Math. Ann.}, 387(1-2):79--109, 2023.

\bibitem[BPR06]{BasuPoRoyBook}
S.~Basu, R.~Pollack, and M.-F. Roy.
\newblock {\em Algorithms in real algebraic geometry}, volume~10 of {\em
  Algorithms and Computation in Mathematics}.
\newblock Springer-Verlag, Berlin, second edition, 2006.

\bibitem[BR14]{BR}
S.~Basu and M.-F. Roy.
\newblock Divide and conquer roadmap for algebraic sets.
\newblock {\em Discrete Comput. Geom.}, 52(2):278--343, 2014.

\bibitem[DH88]{davenportheintz}
J.~H. Davenport and J.~Heintz.
\newblock Real quantifier elimination is doubly exponential.
\newblock {\em J. Symbolic Comput.}, 5(1-2):29--35, 1988.

\bibitem[GV09]{gavo}
A.~Gabrielov and N.~Vorobjov.
\newblock Approximation of definable sets by compact families, and upper bounds
  on homotopy and homology.
\newblock {\em J. Lond. Math. Soc. (2)}, 80(1):35--54, 2009.

\bibitem[LRT24]{LRT-Sard}
A.~Lerario, L.~Rizzi, and D.~Tiberio.
\newblock Sard properties for polynomial maps in infinite dimension.
\newblock {\em arXiv 2407.02296}, 2024.

\bibitem[Mat73]{Mather}
J.~N. Mather.
\newblock Generic projections.
\newblock {\em Ann. of Math. (2)}, 98:226--245, 1973.

\bibitem[Vit55]{Vit1}
A.~G. Vitu\v{s}kin.
\newblock {\em O mnogomernyh variaciyah}.
\newblock Gosudarstv. Izdat. Tehn.-Teor. Lit., Moscow, 1955.

\bibitem[Wal78]{Walker}
R.~J. Walker.
\newblock {\em Algebraic curves}.
\newblock Springer-Verlag, New York-Heidelberg, 1978.
\newblock Reprint of the 1950 edition.

\bibitem[YC04]{ComteYomdin}
Y.~Yomdin and G.~Comte.
\newblock {\em Tame geometry with application in smooth analysis}, volume 1834
  of {\em Lecture Notes in Mathematics}.
\newblock Springer-Verlag, Berlin, 2004.

\end{thebibliography}
\end{document}